\documentclass[10pt]{amsart}
\usepackage{amsmath, amscd, amssymb, latexsym, hhline, euscript, mathrsfs}
\usepackage[all]{xy}
\usepackage{enumitem}
\usepackage[usenames]{color} 



\def\a{{\alpha}}
\def\b{{\beta}}
\def\g{{\gamma}}
\def\e{{\epsilon}}

\def\k{{\Bbbk}}
\def\l{{\lambda}}

\def\Lam{{\Lambda}}

\def\BZ{{\mathbb{Z}}}

\def\OO{\mathfrak{O}}
\def\AA{\mathcal{A}}
\def\DD{\mathfrak{D}}
\def\CC{\mathcal{C}}
\def\CD{\mathcal{D}}

\def\CF{\mathcal{F}}
\def\RR{\mathcal{R}}

\def\ldu{{}^{\vee}}
\newcommand\ann{\operatorname{ann}}
\newcommand\Irr{\operatorname{Irr}}
\newcommand\Soc{\operatorname{Soc}}

\newcommand\ord{\operatorname{ord}}
\newcommand\lcm{\operatorname{lcm}}
\newcommand\Tr{\operatorname{Tr}}
\newcommand\Ext{\operatorname{Ext}}
\newcommand\id{\operatorname{id}}

\DeclareMathOperator\co{\operatorname{co}}
\def\to{\rightarrow}
\def\dim{{\mbox{\rm dim}}}
\def\ord{{\mbox{\rm ord}}}
\def\Hom{{\mbox{\rm Hom}}}

\def\Soc{{\mbox{\rm Soc}}}
\newcommand\enumeri[1]{\begin{enumerate}[label=\rm(\roman*), leftmargin=*] #1 \end{enumerate}}

\newcommand\C[1]{#1\mbox{-\bf{mod}}_{\operatorname{\mathsf{fin}}}}
\newcommand\CM[1]{#1\mbox{-\bf{mod}}}
\newcommand\ld[1]{{}_{#1}}

\newcommand\YD[1]{{}^{#1}_{#1}\!\mathcal{YD}}
\newcommand\inv{^{-1}}
\newcommand\bidu{^{\vee\vee}}

\newcommand\du{^{\vee}}
\newcommand\ol[1]{\overline{#1}}
\renewcommand\o{\otimes}
\newtheorem{mainthm}{Theorem}


%

\newtheorem{thm}{Theorem}[section]

\newtheorem{cor}[thm]{Corollary}
\newtheorem{prop}[thm]{Proposition}
\newtheorem{lem}[thm]{Lemma}

\newtheorem*{q}{Conjecture}
\newtheorem{remark}[thm]{Remark}

\theoremstyle{definition}

\numberwithin{equation}{section}

\makeatletter
\def\namelabel#1#2{\@bsphack
  \protected@write\@auxout{}%
         {\string\newlabel{#1.nme}{{#2}{#2}}}%
  \@esphack}

\makeatother

\title{On Hopf algebras of dimension $4p$}
\author{Yi-Lin Cheng}
\address{Department of Mathematics, Iowa State University, Ames, IA 50011, USA.}
 \email{elynn@iastate.edu}
\author{Siu-Hung Ng}
\address{Department of Mathematics, Iowa State University, Ames, IA 50011, USA.}
 \email{rng@iastate.edu}
 \thanks{The second author was partially supported by NSA grant H98230-08-1-0078 and NSF grant DMS10-01566.}
 \dedicatory{Dedicated to Professor Miriam Cohen on the occasion of her retirement}
\begin{document}
\maketitle
\begin{abstract}
In this paper, we prove that a non-semisimple Hopf algebra $H$ of dimension $4p$ with $p$ an odd prime over an algebraically closed field of characteristic zero is pointed provided $H$  contains more than two group-like elements. In particular, we prove that non-semisimple Hopf algebras of dimensions 20, 28 and 44 are pointed or their duals are pointed, and this completes the classification of Hopf algebras in these dimensions.
\end{abstract}
\section*{Introduction}
Let $\k$ be an algebraically closed field of characteristic zero. Kaplansky conjectured in \cite{Kapl75} that there are only finitely many isomorphism classes of finite-dimensional Hopf algebras over $\k$ for any given dimension. His conjecture was disproved by counterexamples of dimension $p^4$ with $p$  an odd prime (cf.  \cite{AS98}, \cite{BDG99} and \cite{Gel98}).
However, there are only finitely many isomorphism classes of dimension 16 \cite{GV}. The smallest known dimension for which infinitely many non-isomorphic Hopf algebras exist is 32 \cite{EG02}.

The dimension 30 has been recently classified in \cite{Fuk}. For the dimensions less than 32, only 20, 24, 27, 28 have not been completely classified, and they are products of at least three primes.  All other Hopf algebras of dimension less than 32 are known to have finitely many isomorphism classes (cf. \cite{B09} for more details).  This paper concerns mainly non-semisimple Hopf algebras of dimension $4p$, and we complete the classification for dimensions 20, 28 and 44 in Theorem \ref{t:2844}.

For a pair of distinct primes $p$ and $q$, semisimple Hopf algebras of dimension $pq^2 < 100$ are classified (cf. \cite{Na99}, \cite{Na01} and \cite{Na04}). By the recent result of \cite{ENO09},  these semisimple Hopf algebras are of Frobenius type. For $q=2$, it follows from  \cite[Theorem 0.1]{Na01} that there are exactly two isomorphism classes $\mathscr{A}_0, \mathscr{A}_1$ of non-trivial semisimple Hopf algebras over $\k$ of dimension $4p$. These semisimple Hopf algebras $\mathscr{A}_i$ were constructed in \cite{Gel97} and they are self-dual.

Non-semisimple pointed Hopf algebras of dimension $pq^2$ were classified in \cite[Lemma A.1]{AN1}. The classification can be described in terms of the Taft algebras and the Hopf algebra $\AA(\tau, \mu)$ which is defined as follows: for  a primitive $q$-th root of unity $\tau \in \k$, and  $\mu \in \{0, 1\}$,  $\AA(\tau, \mu)$ is the $\k$-algebra generated by the elements $a$ and $y$ with the relations
$$
a^{pq}=1, \quad y^q = \mu(1-a^q), \quad a y = \tau y a\,.
$$
The comultiplication $\Delta$ on $\AA(\tau, \mu)$ is defined by
$$
\Delta(a)=a \o a, \quad \Delta(y)= y \o 1 + a \o y\,.
$$
There are exactly $4(q-1)$ isomorphism classes of non-semisimple pointed Hopf algebras $H$ of dimension $pq^2$, and they are given by
$\AA(\tau, 0)$,  $\AA(\tau, 0)^*$, $\AA(\tau, 1)$, and $T_q \o \k[\BZ_p]$
where  $\tau$ is a primitive $q$-th root of unity, and $T_q$ is a Taft algebra of dimension $q^2$. The set of group-like elements of any of these Hopf algebras forms a cyclic group of order $pq$.

It was proved in \cite{Radf75} that $A(\tau, 1)^*$ is not a pointed Hopf algebra. Therefore,
\renewcommand{\theequation}{\dag}
\begin{equation}\label{eq:completelist}
\AA(\tau, 0),\quad \AA(\tau, 0)^*,\quad  \AA(\tau, 1), \quad \AA(\tau, 1)^* \quad \text{and}\quad T_q \o \k[\BZ_p]
\end{equation}
account for $5(q-1)$ isomorphism classes of non-semisimple Hopf algebras of dimension $pq^2$.

\renewcommand{\theequation}{\thesection.\arabic{equation}}

The classification of general non-semisimple Hopf algebras of dimension $pq^2$ remains open. The problem requires a thorough understanding of Hopf algebras of dimension $pq$ which is also open in general (cf. \cite{EG03}, \cite{Ng04} and \cite{Ng07}). However, the classification of Hopf algebras of dimension $p$, $p^2$, $2p$ (cf. \cite{Zhu}, \cite{Ng02} and \cite{Ng05}) suggests a possibility for dimension $2q^2$ or $4p$. It has been recently proved in \cite{HN} that a non-semisimple Hopf algebra of dimension $2q^2$ is isomorphic to exactly one in the list \eqref{eq:completelist} with $p=2$.

By the same rationale, one would expect the $4p$ case can be solved by adjusting the technique used in the $2p^2$ case.  In fact, we prove the following theorem which completes the classification of non-semisimple Hopf algebras of dimension $4p$ with more than two group-like elements.
\begin{mainthm}\label{t:Gorder}
Let $H$ be a non-semisimple Hopf algebra over $\k$ of dimension $4p$ where $p$ is an odd prime. Then $H$ is pointed if, and only if, $|G(H)|>2$.
\end{mainthm}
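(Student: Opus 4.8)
The plan is to treat the two implications separately, the forward one being immediate from the known classification and the reverse one carrying all the weight.

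For the ``only if'' direction, suppose $H$ is pointed. Then $H$ falls under the classification of non-semisimple pointed Hopf algebras of dimension $pq^2$ recorded above, taken with $q=2$, so $H$ is isomorphic to one of $\AA(\tau,0)$, $\AA(\tau,0)^*$, $\AA(\tau,1)$, or $T_2\o\k[\BZ_p]$ with $\tau=-1$. For each of these the group of group-like elements is cyclic of order $pq=2p$, whence $|G(H)|=2p>2$.

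For the ``if'' direction I would argue by contradiction, assuming $|G(H)|>2$ but $H$ non-pointed. Write $G=G(H)$ and $m=|G|$. By Nichols--Zoeller, $\k[G]$ is a Hopf subalgebra and $m\mid 4p$; since $\k[G]=H$ would make $H$ semisimple, $m\neq 4p$, so $m\in\{4,p,2p\}$. Non-pointedness means the coradical $H_0$ strictly contains $\k[G]$ and hence contains a simple subcoalgebra $C$ with $\dim C=d^2$, $d\ge 2$. The engine is the left-translation action of $G$ on the simple subcoalgebras of each fixed dimension: for $g\in G$, $gC$ is again simple of dimension $d^2$, and the stabilizer $G_C=\{g\in G:gC=C\}$ is a subgroup whose order divides $d^2$ (it embeds into the group-likes occurring in the $H^*$-module $V\o V^*$, where $V$ is the irreducible $H^*$-module dual to $C$, so that $|G_C|\mid d^2$). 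Consequently the $G$-orbit of $C$ contributes $d^2\cdot m/|G_C|$ to $\dim H_0$, with $|G_C|\mid\gcd(m,d^2)$.

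This closes $m=2p$ and $m=p$. For $m=2p$, either $p\mid d$, so $d^2\ge p^2$ already overflows $\dim H_0<\dim H=4p$, or $p\nmid d$, forcing $|G_C|\le 2$, so the orbit contributes at least $4p$ and, with the $2p$ group-likes, exceeds $\dim H$. For $m=p$, either $|G_C|=1$ and the orbit contributes at least $4p$, or $|G_C|=p$ forces $p\mid d$, hence $d^2\ge p^2$; the only borderline is $p=3,\ d=3$, which gives $\dim H_0=\dim H$ and so makes $H$ cosemisimple, contradicting non-semisimplicity via Larson--Radford. The remaining case $m=4$ is the main obstacle: here a $2\times 2$ block can have $|G_C|$ as large as $4$, so a small $G$-orbit of such blocks is not excluded by counting alone. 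To handle it I would pass to the Hopf subalgebra $K$ generated by $C$ and $S(C)$, whose dimension divides $4p$; since a Hopf algebra of dimension $p$ is a group algebra and one of dimension $4$ is commutative, neither can contain a $2$-dimensional simple subcoalgebra, forcing $\dim K\in\{2p,4p\}$. One then analyses these two possibilities using the classification of Hopf algebras of dimension $2p$, the non-semisimplicity constraint $\Tr(S^2)=0$, and the interplay between $G(H)$ and $G(H^*)$, to rule out $m=4$ for a non-pointed $H$. Closing this last case cleanly, rather than the orbit-counting bookkeeping, is where the real difficulty lies.
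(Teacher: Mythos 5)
Your ``only if'' direction is fine, and your orbit-counting does close the cases $|G(H)|=p$ and $|G(H)|=2p$ (the divisibility $|G_C|\mid d^2$ is the standard Nichols--Zoeller freeness of a simple subcoalgebra over its group-like stabilizer, though your justification via $V\o V^*$ is loose). But the proof has a genuine gap exactly where you flag it: the case $|G(H)|=4$ is not closed, and your proposed fallback (pass to the Hopf subalgebra generated by $C$ and $S(C)$, then ``analyse'' using the dimension-$2p$ classification and $\Tr(S^2)=0$) is a plan, not an argument. Counting alone cannot work here: a single $G$-stable $2\times2$ block gives only $\dim H_0\ge 8\le 4p$, and nothing you have written excludes it.

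The paper eliminates this case by proving the stronger statement that $4\nmid|G(H)|$ for \emph{any} non-semisimple $H$ of dimension $4p$ (Lemma \ref{l:4}), via a mechanism absent from your proposal: one first shows $\ord(S^2)$ is a power of $2$ (Corollary \ref{c:Sorder}, resting on Radford's formula \eqref{eq:radfordeq} and Lemma \ref{l:p}); then, writing $H=\bigoplus_{i=1}^{4}He_i$ over the primitive idempotents of $\k[G(H)]$, each summand is $S^2$-stable of dimension $p$ with $\Tr(S^2|_{He_i})=\Tr(S^2\circ r(e_i))=0$ by Larson--Radford, so Lemma \ref{l:known} forces $2\mid p$, a contradiction. (The remaining case $p\mid|G(H)|$ is then handled in the paper not by coradical counting but by Lemma \ref{l:p}: the decomposition over $\k[g]$ bounds $\dim P(\k)\le 4$, forces the composition factors of $P(\k)$ to be $1$-dimensional, yields a non-trivial skew primitive in $H^*$, and an exact sequence $1\to\k[g]\to H\to H_4\to 1$ finishes the argument; your counting is a legitimately more elementary alternative there.) Without an argument of the trace/order-of-$S^2$ type for $|G(H)|=4$, the theorem is not proved.
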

It remains unclear whether there exists a non-semisimple Hopf algebra $H$ of dimension $4p$ with fewer than three group-like elements such that neither $H$ nor $H^*$ is pointed.

    A $4p$-dimensional Hopf algebra which is neither pointed nor dual pointed could be constructed from a $p$-dimensional braided Hopf algebra $R$ in the Yetter-Drinfeld category $\YD{H_4}$ by Radford biproduct or bosonization $R\times H_4$, where $H_4$ denotes the Sweedler algebra. We prove in Proposition \ref{p:bosonization} that $H$ is such a biproduct if, and only if, both $H$ and $H^*$ admit a non-trivial skew primitive element.

For any odd prime $p \le 11$, we show in Lemma \ref{l:skewprimitive} by counting arguments that every non-semisimple Hopf algebra of dimension $4p$ admits a non-trivial skew primitive element. Together with the investigation of $p$-dimensional braided Hopf algebras in $\YD{H_4}$ in Sections \ref{s:braidedHopf} and \ref{s:dim 5}, we prove our main theorem in this paper:
\begin{mainthm}\label{t:2844} If $H$ is a non-semisimple Hopf algebra of dimension $4p$, where $p \le 11$ is an odd prime, then $H$ or $H^*$ is pointed. In particular, for each of these dimensions, there are exactly 5 isomorphism classes which are given by the list \eqref{eq:completelist}.
 \end{mainthm}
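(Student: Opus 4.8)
The plan is to argue by contradiction, funnelling every non-pointed, non-dual-pointed case into the biproduct characterization of Proposition \ref{p:bosonization} and then eliminating the resulting braided Hopf algebras. Suppose, for some odd prime $p \le 11$, there were a non-semisimple Hopf algebra $H$ of dimension $4p$ such that neither $H$ nor $H^*$ is pointed. Applying Theorem \ref{t:Gorder} to $H$, the failure of $H$ to be pointed forces $|G(H)| \le 2$; since $H^*$ is again a non-semisimple Hopf algebra of dimension $4p$ (semisimplicity being a self-dual property in characteristic zero, by Larson--Radford), the same theorem applied to $H^*$ forces $|G(H^*)| \le 2$. Thus the only configuration left to rule out is a non-semisimple $H$ of dimension $4p$ having at most two group-like elements in both $H$ and $H^*$.

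In this configuration I would next produce non-trivial skew primitive elements on both sides. Because $H$ and $H^*$ are both non-semisimple Hopf algebras of dimension $4p$ with $p \le 11$ odd, Lemma \ref{l:skewprimitive} applies to each of them and yields a non-trivial skew primitive element in $H$ and in $H^*$. Proposition \ref{p:bosonization} then identifies $H$ with a Radford biproduct $R \times H_4$, where $H_4$ is the Sweedler algebra and $R$ is a braided Hopf algebra in the Yetter--Drinfeld category $\YD{H_4}$ of dimension $\dim H / \dim H_4 = p$.

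It then remains to show that no such $R$ exists, which is precisely the content of the structural study of $p$-dimensional braided Hopf algebras in $\YD{H_4}$ carried out in Sections \ref{s:braidedHopf} and \ref{s:dim 5}. Concretely, I would analyze $R$ through its coradical filtration inside $\YD{H_4}$: the simple Yetter--Drinfeld modules over $H_4$ and their braidings severely constrain the space of primitives of $R$ and the graded (Nichols-type) algebra it generates, and one shows that the resulting dimension can never be an odd prime $p \le 11$. This contradiction shows that the case $|G(H)| \le 2$ and $|G(H^*)| \le 2$ cannot occur, so $H$ or $H^*$ must be pointed.

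Finally, once $H$ or $H^*$ is pointed, the statement follows from the classification of non-semisimple pointed Hopf algebras of dimension $pq^2$ recalled in the introduction, specialized to $q = 2$ (so that $\tau = -1$ is the unique primitive second root of unity and $q - 1 = 1$): whichever of $H$, $H^*$ is pointed lies among $\AA(\tau,0)$, $\AA(\tau,0)^*$, $\AA(\tau,1)$, $T_q \o \k[\BZ_p]$, and taking duals where needed shows $H$ itself belongs to the list \eqref{eq:completelist}. Since these five Hopf algebras are pairwise non-isomorphic, there are exactly $5$ isomorphism classes in each of these dimensions. I expect the main obstacle to lie in the braided Hopf algebra analysis of the third step — in particular the dimension-$5$ case of Section \ref{s:dim 5}, where an a priori admissible braided Hopf algebra must be examined in detail and excluded — rather than in the reduction or the concluding count.
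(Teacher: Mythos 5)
Your reduction is exactly the paper's: assuming neither $H$ nor $H^*$ is pointed, Lemma \ref{l:skewprimitive} and Proposition \ref{p:bosonization} give $H \cong R \times H_4$ with $R$ a $p$-dimensional braided Hopf algebra in $\YD{H_4}$, and your concluding count via the list \eqref{eq:completelist} also matches. The gap is in your third step, where you must dispose of $R$. You assert that ``no such $R$ exists'' and that a coradical-filtration/Nichols-algebra analysis shows ``the resulting dimension can never be an odd prime $p\le 11$.'' Both claims are false as stated: $R=\k[\BZ_p]$ with trivial $H_4$-action and coaction is a perfectly good $p$-dimensional braided Hopf algebra in $\YD{H_4}$, and its bosonization $T_2\o\k[\BZ_p]$ is on the list. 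What must be proved is not non-existence of $R$ but that every such $R$ is \emph{commutative}; Proposition \ref{p:commu-braided} then says $R\times H_4$ is pointed, contradicting the standing assumption.

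Moreover, the coradical filtration is the wrong tool here: Theorem \ref{t:ss} shows $R$ and $R^*$ are semisimple algebras, so $R$ is cosemisimple, its coradical filtration is trivial, and there are no non-trivial primitives from which to build a Nichols-type graded algebra. The actual elimination of non-commutative $R$ is representation-theoretic: Theorem \ref{t:1-chi} shows a non-commutative $R$ would force $|G(R^*)|\equiv 1 \bmod 4$ (via the decomposition of $R\times H_4$ into $H_4$-simple ideal summands and the non-self-duality of $2$-dimensional simple modules, Lemma \ref{l:dual}), which is impossible for $\dim R = 3, 7, 11$ (Corollary \ref{c:347}); the case $\dim R=5$, where $5\equiv 1\bmod 4$ renders the congruence useless, is handled separately in Section \ref{s:dim 5} by invoking the Chen--Zhang classification of $4$-dimensional matrix algebras in $\YD{H_4}$ and checking that the braided antipode axiom fails (Lemma \ref{l:b5}). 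You correctly flagged dimension $5$ as the hard case, but the route you sketch would not produce these arguments.
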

The non-semisimple case of dimension 12 was classified by Natale in \cite{Na02}, and the semisimple case was completed by Fukuda in \cite{Fu97}.

Theorem \ref{t:2844} and the main result of \cite{HN} justify the following conjecture.
\begin{q}
 Let $p, q$ be distinct primes. For any non-semisimple Hopf algebra $H$ over $\k$ of dimension $pq^2$,  $H$ or $H^*$ is pointed.
\end{q}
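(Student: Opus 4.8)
The plan is to generalize the biproduct strategy used here for $4p$ to all dimensions $pq^2$ via the Andruskiewitsch–Schneider lifting philosophy, organizing everything around the Taft algebra $T_q$ in place of the Sweedler algebra $H_4 = T_2$. Fix a non-semisimple $H$ of dimension $pq^2$. By Larson–Radford, in characteristic zero $H$ is simultaneously non-semisimple and non-cosemisimple, so both $H$ and $H^*$ have a non-trivial coradical filtration, and $|G(H)|$, $|G(H^*)|$ divide $pq^2$ by Nichols–Zoeller. Arguing by contradiction, I would assume that neither $H$ nor $H^*$ is pointed and aim to force $H$ into the well-understood list \eqref{eq:completelist}, every member of which is pointed or dual pointed.

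The first task is to produce a non-trivial skew-primitive element in both $H$ and $H^*$. Non-cosemisimplicity gives $H_1 \neq H_0$, but one must show that the new degree-one elements sit over \emph{group-like} elements rather than over higher-dimensional simple subcoalgebras of $H_0$. I would bound the number and the sizes of the matrix blocks of $H_0$ using the Radford $S^4$-formula together with the trace-of-$S^2$ and integral identities, exactly as in the counting proof of Lemma \ref{l:skewprimitive}, but now the estimate must be made uniform in $p$ and $q$ rather than verified for a finite list of primes. \textbf{This is the principal obstacle:} the crude counting that closes for $p \le 11$ does not close for large primes, so a genuinely structural input is needed — for instance a sharp lower bound on $\dim H_0$, or a divisibility constraint on the block sizes forcing a one-dimensional block to appear in $H_1 \setminus H_0$.

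Granting a non-trivial $(g,1)$-skew-primitive $x$ with $gxg\inv = \omega x$, the dimension count does the rest. A finiteness argument in characteristic zero rules out $\omega = 1$, so $\omega$ is a primitive $n$-th root of unity with $n>1$ and $n \mid \ord(g)$; the Hopf subalgebra $\k\langle g, x\rangle$ is then a generalized Taft algebra of dimension $n \cdot \ord(g)$, which must divide $pq^2$. Since $p$ and $q$ are distinct primes this forces $n = q$, as the alternatives $p \mid n$ and $n = q^2$ would produce the forbidden factors $p^2$ and $q^4$ respectively; hence the associated graded of $\k\langle g, x\rangle$ is the Taft algebra $T_q$, or else $\ord(g) = pq$ and $H$ is itself a pointed generalized Taft algebra. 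Running the analogue of Proposition \ref{p:bosonization} — that is, constructing a Hopf algebra projection $H \to T_q$ splitting this inclusion — then realizes $H$ as a Radford biproduct $R \times T_q$ with $R$ a $p$-dimensional braided Hopf algebra in $\YD{T_q}$, the exact $\dim T_q = q^2$ generalization of the $H_4$ situation treated in Sections \ref{s:braidedHopf} and \ref{s:dim 5}.

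The final step is to classify the $p$-dimensional braided Hopf algebras $R$ in $\YD{T_q}$ and to show that every biproduct $R \times T_q$ is pointed or has pointed dual, hence lies in \eqref{eq:completelist}, contradicting the assumption and proving the conjecture. The decisive leverage is that the Yetter–Drinfeld grading over $T_q$ is supported on $G(T_q) \cong \BZ_q$, so all braiding scalars are $q$-th roots of unity; since $\dim R = p$ is a prime coprime to $q$, no non-trivial Nichols algebra $\mathcal{B}(V)$ built from such data can have dimension $p$, which should collapse $R$ to $\k[\BZ_p]$ with trivial braiding and hence $R \times T_q \cong T_q \o \k[\BZ_p]$. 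Making this rigorous for \emph{every} $q$ is the second major obstacle: one must rule out exotic braided Hopf algebra structures of prime dimension that are not Nichols algebras, and control the liftings from the associated graded back to $H$. Together with the uniform existence of skew-primitives, this classification of prime-dimensional objects in $\YD{T_q}$ constitutes the heart of the conjecture.
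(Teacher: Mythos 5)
You have attempted to prove the final \emph{Conjecture}, which the paper does not prove: it is stated as an open problem, supported only by Theorem \ref{t:2844} (the cases $4p$, $p\le 11$), the $2q^2$ case from \cite{HN}, and the low-dimensional classifications. Your proposal is therefore not comparable to a proof in the paper, and, read on its own terms, it is a programme rather than a proof: the two steps you flag as ``principal obstacles'' are precisely the open content of the conjecture, and nothing in your sketch closes either of them. For the first gap, the existence of non-trivial skew primitives in both $H$ and $H^*$: the paper's Lemma \ref{l:skewprimitive} is proved by a dimension count in which the total $\dim H = 4p \le 44$ is played off against lower bounds like $\dim U \cdot \dim P(U)\ge 16$, and these inequalities simply stop being contradictory once $\dim H$ grows; the paper itself states after Theorem \ref{t:Gorder} that it is unknown whether a non-semisimple $H$ of dimension $4p$ with $|G(H)|\le 2$ can fail to be pointed or dual pointed. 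Moreover, several ingredients you would need even to set up the biproduct step are genuinely $q=2$ phenomena: Corollary \ref{c:Sorder} and Lemma \ref{l:4} rest on $\ord(S^2)$ being a power of $2$ against the \emph{odd} prime $p$ (via Lemma \ref{l:known}), Theorem \ref{t:Gorder} and Proposition \ref{p:bosonization} rest on those, and Proposition \ref{p:bosonization} additionally uses Masuoka freeness over the coinvariants together with $|G(H)|=2$ to force $K\cong H_4$; no analogues of these parity arguments are supplied for general $q$, so ``running the analogue of Proposition \ref{p:bosonization}'' is an unproven assertion, not a step.

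The second gap is at least as serious. Your claim that the $G(T_q)\cong\BZ_q$ support of the grading plus $\gcd(p,q)=1$ ``should collapse $R$ to $\k[\BZ_p]$'' presumes $R$ is generated in a way that makes Nichols-algebra obstructions apply, and you correctly note that prime-dimensional braided Hopf algebras in $\YD{T_q}$ need not be Nichols algebras --- but the paper shows how hard even the first instance $q=2$ is: after proving semisimplicity (Theorem \ref{t:ss}), commutativity is obtained only via the congruence $|G(R^*)|\equiv 1 \bmod 4$ of Theorem \ref{t:1-chi} (which itself leans on Lemma \ref{l:dual} and the power-of-two order of $S$, again $q=2$ parity), and that congruence disposes only of $p\equiv 3 \bmod 4$; the case $p=5$ required importing the full Chen--Zhang classification \cite{CZ} of $4$-dimensional matrix algebras in $\YD{H_4}$, for which no analogue over $T_q$, $q>2$, is cited or known. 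So even restricted to $q=2$ your final step is open for $p=13$ and beyond, and for general $q$ the braided-classification step has no foundation in the literature you invoke. In short: your reduction skeleton (skew primitives $\Rightarrow$ biproduct $R\times T_q$ $\Rightarrow$ commutativity of $R$ $\Rightarrow$ list \eqref{eq:completelist}) faithfully extrapolates the paper's architecture, but both load-bearing walls are missing, which is exactly why the statement remains a conjecture.
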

This paper is organized as follows: Section 1 collects some
notation and preliminary results for the later
sections. In Section 2, we specialize to study non-semisimple Hopf
algebras of dimension $4p$, and prove Theorem \ref{t:Gorder}. In Section \ref{s:braidedHopf}, we turn our focus to $p$-dimensional braided Hopf algebras $R$ in the Yetter-Drinfeld category $\YD{H_4}$. We show that these algebras are always semisimple, and we provide a necessary condition in Theorem \ref{t:1-chi} for a non-commutative $R$. This condition implies $R$ is commutative for $\dim R=3,7, 11$. For $\dim R=5$, the commutativity of $R$ is shown in Section \ref{s:dim 5}. Finally, we prove the technical Lemma \ref{l:skewprimitive}, and complete the proof of Theorem \ref{t:2844} in Section \ref{s:main}.

\section{Notation And Preliminaries}\label{s:1}
Let $H$ be a finite-dimensional Hopf algebra over $\k$ with comultiplication $\Delta$,
counit $\e$ and antipode $S$. A \emph{left integral} of $H$ is an element $\Lambda \in H$ such that $h\Lambda =\e (h)\Lambda$,
and a \emph{right integral} of $H$ can be defined similarly. The subspace of left (or right) integrals of $H$ is always 1-dimensional (cf. \cite{Mont93bk}).

The dual $H^*$ of $H$ admits a natural Hopf algebra structure and the comultiplication $\Delta$ induces two natural actions of $H^*$, $\rightharpoonup$ and $\leftharpoonup$, on $H$ given by
$$
f \rightharpoonup h =h_1 f(h_2), \quad \text{and} \quad h \leftharpoonup f =  f(h_1) h_2 \quad \text{for } h\in H, \, f \in H^*\,,
$$
where $\Delta(h)= h_1 \o h_2$ is the Sweedler notation for comultiplication with the summation suppressed.
One can easily see that an element $\lambda \in H^*$ is a right
integral if, and only if,
\begin{equation}
  h \leftharpoonup \lambda = \lambda(h)1_H\quad \text{for all }h \in H\,.
\end{equation}

A non-zero element  $g \in H$ is said to be  \emph{group-like} if $\Delta(g) = g \otimes g$. The set of all group-like elements of $H$, denoted by $G(H)$, forms a group under the multiplication of $H$, and  is linearly independent over $\k$. The \emph{distinguished group-like} elements $a \in G(H)$ and $\a \in G(H^*)$ are respectively defined by the conditions:
$$
\Lambda h =\a(h) \Lambda, \quad \lambda \rightharpoonup h= \lambda(h) a\quad \text{for all }h \in H,
$$
where $\lambda \in H^*$ is a non-zero right integral, and $\Lambda \in H$ is a non-zero left integral. The fourth power of the antipode can be expressed in terms of the distinguished group-like elements $a \in H$  and $\a \in H^*$ by the celebrated  Radford formula \cite{Radf76}:
\begin{equation} \label{eq:radfordeq}
S^4(h)=a(\alpha\rightharpoonup h \leftharpoonup \alpha^{-1})a^{-1}
\hbox{ for }h\in H.
\end{equation}
The antipode conceives  important information of the underlying Hopf algebra. The following theorem by Larson and
Radford (cf. \cite{LaRa87}, \cite{LaRa88}, \cite{Radf94}) manifests how the antipode relates to the semisimplicity of a Hopf algebra.
 \begin{thm}
Let $H$ be a finite-dimensional Hopf algebra over
the field $\k$ with antipode $S$. Then the following statements are equivalent:
\enumeri{
  \item $H$ is  not semisimple;
  \item $H^*$ is not  semisimple;
  \item $S^2 \ne \id_H$;
  \item $\Tr(S^2)= 0$;
  \item $\Tr(S^2\circ r(b))=0$ for all $b\in H$,
}
where $r(b)$ denotes the linear operator on $H$ defined by $r(b)(x)=xb$.
\end{thm}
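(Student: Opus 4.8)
The plan is to reduce the entire equivalence to Maschke's criterion together with a single trace identity, isolating the two genuinely deep inputs at the very end. Recall Maschke's theorem in its Hopf-algebraic form: $H$ is semisimple if and only if $\e(\Lambda)\ne 0$ for a nonzero left integral $\Lambda \in H$, and dually $H^*$ is semisimple if and only if $\lambda(1)\ne 0$ for a nonzero right integral $\lambda \in H^*$. Thus I would first translate (i) into ``$\e(\Lambda)=0$'' and (ii) into ``$\lambda(1)=0$''.

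The technical heart is a trace formula. First I would verify that the functional $\phi \in H^*$ defined by $\phi(b)=\Tr(S^2\circ r(b))$ is a right integral of $H^*$; this is a direct, if slightly tedious, computation showing $\phi(h_1)h_2=\phi(h)1_H$ from the Hopf axioms and the definition of $S$. Since the space of right integrals of $H^*$ is one-dimensional, $\phi = c\lambda$ for some scalar $c$. To pin down $c$, I would evaluate at the left integral: because $\Lambda$ is a left integral, $r(\Lambda)$ is the rank-one operator $x\mapsto \e(x)\Lambda$, so $S^2\circ r(\Lambda)$ is $x\mapsto \e(x)S^2(\Lambda)$, whose trace is $\e(S^2(\Lambda))=\e(\Lambda)$. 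Comparing with $\phi(\Lambda)=c\lambda(\Lambda)$ and using $\lambda(\Lambda)\ne 0$ (nondegeneracy of the integral pairing) gives
\[
\Tr(S^2\circ r(b))=\frac{\e(\Lambda)}{\lambda(\Lambda)}\,\lambda(b)\qquad(b\in H).
\]
From this formula most of the equivalences fall out formally. Since $\lambda\ne 0$, statement (v) (vanishing for all $b$) holds precisely when $\e(\Lambda)=0$, that is, exactly when $H$ is not semisimple, proving (i)$\Leftrightarrow$(v). Setting $b=1$ gives $\Tr(S^2)=\e(\Lambda)\lambda(1)/\lambda(\Lambda)$, so (iv) holds iff $\e(\Lambda)\lambda(1)=0$, i.e.\ iff $H$ or $H^*$ fails to be semisimple. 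For (iii), if $S^2=\id$ then $\Tr(S^2)=\dim H\ne 0$ in characteristic zero, whence $\e(\Lambda)\ne 0$ and $H$ is semisimple; this yields (i)$\Rightarrow$(iii) with no further input.

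The remaining implications collapse once one has the two deep, characteristic-zero directions, which I expect to be the main obstacle: (a) semisimplicity of $H$ forces semisimplicity of $H^*$ (equivalently $\e(\Lambda)\ne 0 \Rightarrow \lambda(1)\ne 0$), which gives (i)$\Leftrightarrow$(ii) and promotes (iv) to the full equivalence; and (b) semisimplicity of $H$ forces $S^2=\id$, which gives (iii)$\Rightarrow$(i). For (b) I would first note that a semisimple and cosemisimple $H$ is unimodular and counimodular, so $a=1$ and $\a=\e$ and Radford's formula \eqref{eq:radfordeq} collapses to $S^4=\id$; then $S^2$ is an involution with eigenvalues $\pm 1$, so it suffices to know $\Tr(S^2)=\dim H$, which forces every eigenvalue to be $+1$ and hence $S^2=\id$. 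The genuine difficulty lies in the Larson--Radford input behind (a) together with the integrality fact $\Tr(S^2)=\dim H$ in the semisimple case: these are not formal consequences of the trace identity, they use the characteristic-zero hypothesis essentially (through reduction-modulo-$p$ and integrality arguments), and I would invoke them as the deep core of the theorem rather than attempt to reprove them from scratch.
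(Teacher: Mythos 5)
The paper does not prove this theorem at all: it is quoted as a known result of Larson and Radford with citations to \cite{LaRa87}, \cite{LaRa88} and \cite{Radf94}, so there is no in-paper argument to compare yours against line by line. Judged on its own, your outline is correct and follows the standard route through that literature. The trace identity is right: one can either verify directly that $\phi(b)=\Tr(S^2\circ r(b))$ satisfies $\phi(h_1)h_2=\phi(h)1_H$, or deduce it from Radford's formula $\Tr(f)=\lambda\bigl(S(\Lambda_2)f(\Lambda_1)\bigr)$ together with $S(\Lambda_2)S^2(\Lambda_1)=S\bigl(S(\Lambda_1)\Lambda_2\bigr)=\e(\Lambda)1_H$, which gives $\phi=\e(\Lambda)\,\lambda\circ S^2$ and hence exhibits $\phi$ as a scalar multiple of a right integral; your normalization $\phi(\Lambda)=\e(\Lambda)$ via the rank-one operator $x\mapsto\e(x)S^2(\Lambda)$ is correct, as is the use of $\lambda(\Lambda)\ne 0$. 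The formal consequences you draw — (i)$\Leftrightarrow$(v), the identification of (iv) with $\e(\Lambda)\lambda(1)=0$, and (i)$\Rightarrow$(iii) from $\Tr(\id_H)=\dim H\ne 0$ in characteristic zero — are all valid, and you correctly isolate the two genuinely deep characteristic-zero inputs (semisimple $\Leftrightarrow$ cosemisimple, and $S^2=\id$ for semisimple cosemisimple Hopf algebras via $\Tr(S^2)=\dim H$) as the non-formal core, deferring them to Larson--Radford rather than claiming to reprove them. Since the paper itself treats the entire theorem as a citation, this division of labor is appropriate; just be aware that your write-up is a proof modulo those two cited theorems, not a self-contained one.
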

This well-known result will be used repeatedly without further explanation in the sequel.

Recall that for an $H$-module $V$, the left dual $V^{\vee}$ is the
left $H$-module with the underlying space $V^*=\Hom_{\k}(V,\k)$ and
the $H$-action given by
$$
(hf)(x)=f(S(h)x) \hbox{ for }x\in V,\, f\in V^*,\, h \in H.
$$
The right dual $^{\vee}V$ of $V$ is defined similarly but the $H$-action is given by
$$
(hf)(x)=f(S^{-1}(h)x).
$$
The collection $\C{H}$ of all the $H$-modules of finite dimension over $\k$ forms a rigid monoidal category.

For any $U, V, W \in \C{H}$, we have the following natural isomorphisms:
\begin{equation}\label{eq:iso}
\Hom_H(V\du \o U , W) \cong \Hom_H(U, V \o W)  \cong \Hom_H(U \o \ldu W, V)\,.
\end{equation}

For an algebra automorphism $\sigma$ on $H$, denote by $_{\sigma}V$ the
$H$-module with the underlying space $V$ and the $\sigma$-twisted action given by $
h\cdot v = \sigma(h)v \hbox{ for } h \in H, v \in V$.
The natural isomorphism $j: \ld{S^2}V \to V\bidu$ of vector spaces is an $H$-module map. In particular, we have
\begin{equation} \label{eq:doubledual}
  \ld {S^2} V \stackrel{j}{\cong} V\bidu\quad \text{for } V \in \C{H}\,.
\end{equation}
Similarly, one also has the $H$-module isomorphism $_{S^{-2}}V\stackrel{j}{\cong}
{\bidu V}$.

Let $P(V)$ and $I(V)$ respectively denote the projective cover and the injective hull of $V \in \C{H}$, and $\Irr(H)$ a complete set of non-isomorphic simple $H$-modules. For $\beta \in G(H^*)$, we define $\k_{\beta}$ as the 1-dimensional $H$-module which affords the irreducible character $\beta$. We will simply write $\k$ for the trivial 1-dimensional $H$-module $\k_\e$. If $\a \in H^*$ is the distinguished group-like element and $V \in \Irr(H)$, then it follows from \cite[Lemma 1.1]{Ng07} that
$$
\k_{\alpha^{-1}}\o {\bidu V}\cong \Soc(P(V)) \hbox{ and } V\cong
\Soc(P(\k_{\alpha}\o V\bidu)).
$$
Since $H$ is a Frobenius algebra, we have
\begin{align}\label{eq:socle}
I(\k_{\alpha^{-1}}\o {\bidu V})\cong P(V)\,.
\end{align}
 For  $M, V \in \C{H}$, we define $[M:V]:=\dim\, \Hom_H(P(V), M)$. If $V$ is simple, $[M:V]$ is equal to the multiplicity of
 $V$ appearing as a composition factor of
$M$. By  \cite[Lemma 1.7.7]{Benson}, we have
\begin{equation}\label{eq:mult2}
 [M:V]  = \dim\, \Hom_H(M, I(V)).
\end{equation}
\begin{lem}\label{l:composition}
Let $V, W$ be  simple $H$-modules. Then
$$
[P(V): W]= [P(W): \k_{\a\inv} \o  {}\bidu V]\,.
$$
In addition, if $\dim V \ne \dim W$ and $[P(V):W] > 0$, then
\begin{eqnarray*}
\dim P(W) & \ge & 2  \dim W + [P(V): W]\dim V \quad \text{and}\\
\dim P(V) & \ge & 2 \dim V + [P(V): W]\dim W\,.
\end{eqnarray*}

\end{lem}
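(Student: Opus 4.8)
The plan is to reduce everything to dimensions of $\Hom$-spaces between projectives, using the two expressions for the composition multiplicity: the defining formula $[M:S]=\dim\Hom_H(P(S),M)$ and the Frobenius reciprocity \eqref{eq:mult2}, $[M:S]=\dim\Hom_H(M,I(S))$.

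For the identity, first I would write $[P(V):W]=\dim\Hom_H(P(W),P(V))$ directly from $[M:W]=\dim\Hom_H(P(W),M)$ with $M=P(V)$. On the other side, applying \eqref{eq:mult2} with $M=P(W)$ and $S=\k_{\a\inv}\o{}\bidu V$ gives $[P(W):\k_{\a\inv}\o{}\bidu V]=\dim\Hom_H(P(W),I(\k_{\a\inv}\o{}\bidu V))$. Now \eqref{eq:socle} identifies $I(\k_{\a\inv}\o{}\bidu V)\cong P(V)$, so the right-hand side is again $\dim\Hom_H(P(W),P(V))$, and the two multiplicities agree. This settles the first assertion with no case analysis.

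For the inequalities, I would exploit the structure of $P(V)$: its head is $V$ and, by the consequence of \cite[Lemma 1.1]{Ng07} recorded just above, its socle is $\Soc(P(V))\cong\k_{\a\inv}\o{}\bidu V$; both of these simple modules have dimension $\dim V$, since tensoring with the $1$-dimensional $\k_{\a\inv}$ and passing to duals preserve dimension. Since $[P(V):W]>0$ with $W\not\cong V$ (as $\dim W\ne\dim V$), the module $P(V)$ is not simple, and $W$ is distinct from both $V$ and $\k_{\a\inv}\o{}\bidu V$. Summing the formula $\dim P(V)=\sum_{S}[P(V):S]\dim S$ then gives $\dim P(V)\ge [P(V):V]\dim V+[P(V):\k_{\a\inv}\o{}\bidu V]\dim V+[P(V):W]\dim W$, and the crux is to show that the first two terms together contribute at least $2\dim V$. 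For the companion bound I would invoke the identity just proved: $[P(W):\k_{\a\inv}\o{}\bidu V]=[P(V):W]>0$, and since $\k_{\a\inv}\o{}\bidu V$ has dimension $\dim V\ne\dim W$, the same bookkeeping applied to $P(W)$ yields $\dim P(W)\ge 2\dim W+[P(V):W]\dim V$.

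The main obstacle is the degenerate case $V\cong\k_{\a\inv}\o{}\bidu V$, in which the head and socle of $P(V)$ coincide and the two dimension-$\dim V$ contributions could a priori be a single composition factor. Here I would argue that $[P(V):V]\ge2$ whenever $P(V)$ is not simple: if $V$ occurred exactly once, then composing the inclusion $V\cong\Soc(P(V))\hookrightarrow P(V)$ with the projection $P(V)\twoheadrightarrow V=P(V)/\Rad(P(V))$ yields an endomorphism of the simple module $V$, hence either $0$ or an isomorphism. An isomorphism would split off a copy of $V$, contradicting the indecomposability of $P(V)$; and the vanishing forces the socle copy of $V$ to lie in $\Rad(P(V))$, producing a second composition factor $V$ disjoint from the head, contradicting $[P(V):V]=1$. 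The identical argument applies to $P(W)$ when $W\cong\k_{\a\inv}\o{}\bidu W$, so in every case the head/socle pair contributes the required $2\dim V$ (resp.\ $2\dim W$), completing both inequalities.
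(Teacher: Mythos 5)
Your proof is correct and follows essentially the same route as the paper: the identity via $[P(V):W]=\dim\Hom_H(P(W),P(V))$ together with \eqref{eq:socle} and \eqref{eq:mult2}, and the inequalities by counting the head, the socle, and the extra composition factor of dimension $\ne\dim W$ (resp.\ $\ne\dim V$). The only difference is that you explicitly justify the degenerate case where the head and socle of the projective cover are isomorphic (showing the socle still lies in the radical and so contributes a second composition factor), a point the paper leaves implicit in the phrase ``besides the socle and the head.''
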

\begin{proof}
By \eqref{eq:socle} and \eqref{eq:mult2}, we have
\begin{multline*}
[P(V):W] = \dim \Hom_H(P(W), P(V))\\
=\dim \Hom_H(P(W),
I(\k_{\a\inv}\o {}\bidu V)) =[P(W): \k_{\a\inv} \o  {}\bidu V]\,.
\end{multline*}
If $[P(V):W] > 0$ and $\dim V \ne \dim W$, then $[P(W): \k_{\a\inv}
\o  {}\bidu V]>0$. Thus, besides
the socle and the head, $P(W)$ has the composition factor
$\k_{\a\inv}\o {}\bidu V$ with multiplicity $[P(V):W]$. Therefore,
$$
\dim P(W) \ge 2 \dim W + [P(V):W] \dim V\,.
$$
The second inequality can be obtained by the same argument.
\end{proof}
\begin{lem}\label{l:primitive}
Let $H$ be a finite-dimensional non-semisimple Hopf algebra with
Jacobson radical $J$. For any simple $H$-modules $V,W$, we have
$$
[JP(V)/J^2P(V): W] = \dim \Ext(V,W) = [\Soc(I(W)/W): V].
$$
In addition, if $H^*$ has  no non-trivial skew primitive element, then
each simple $H$-submodule of $JP(\k)/J^2P(\k)$ or $\Soc(I(\k)/\k)$ is not 1-dimensional.
\end{lem}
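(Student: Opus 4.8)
The plan is to prove the two equalities by standard homological algebra and then to reduce the one-dimensionality assertion to the non-existence of skew-primitive elements in $H^*$.

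For the first equality I would apply $\Hom_H(-,W)$ to the projective cover sequence $0\to JP(V)\to P(V)\to V\to 0$, where $JP(V)=\Rad P(V)$ is the kernel of the projection onto the head $V$. Since $P(V)$ is projective we have $\Ext(P(V),W)=0$, and since every homomorphism from $P(V)$ into the simple module $W$ factors through its head $V$, the restriction map $\Hom_H(P(V),W)\to\Hom_H(JP(V),W)$ is zero; hence $\Ext(V,W)\cong\Hom_H(JP(V),W)$. A homomorphism from $JP(V)$ into the simple module $W$ factors through the head $JP(V)/J^2P(V)$, so $\dim\Hom_H(JP(V),W)$ equals the multiplicity $[JP(V)/J^2P(V):W]$. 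The second equality is the dual statement: applying $\Hom_H(V,-)$ to the injective-hull sequence $0\to W\to I(W)\to I(W)/W\to 0$ and using that the image of the simple module $V$ in $I(W)$ lies in $\Soc I(W)=W$, one obtains $\Ext(V,W)\cong\Hom_H(V,I(W)/W)$; since the image of $V$ then lies in $\Soc(I(W)/W)$, this dimension is $[\Soc(I(W)/W):V]$.

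For the second assertion the key step is to recognise $\Ext$ between one-dimensional modules as a space of skew-primitives of $H^*$. The one-dimensional $H$-modules are exactly the $\k_\b$ with $\b\in G(H^*)$. Given $\b,\g\in G(H^*)$, I would describe a two-dimensional extension $0\to\k_\g\to E\to\k_\b\to 0$ in a basis $e_1,e_2$ by $h\cdot e_1=\g(h)e_1$ and $h\cdot e_2=\b(h)e_2+\phi(h)e_1$; the module axioms force $\phi\in H^*$ to satisfy $\phi(hh')=\g(h)\phi(h')+\b(h')\phi(h)$, that is, $\Delta_{H^*}(\phi)=\g\o\phi+\phi\o\b$. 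Thus $\phi$ is a $(\g,\b)$-skew-primitive of $H^*$; the split extensions correspond precisely to the trivial skew-primitives $\k(\g-\b)$, and therefore $\Ext(\k_\b,\k_\g)\neq 0$ if and only if $H^*$ possesses a non-trivial $(\g,\b)$-skew-primitive element.

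With this dictionary the conclusion is immediate. A one-dimensional simple submodule of $JP(\k)/J^2P(\k)$ is some $\k_\b$, whose multiplicity is $\dim\Ext(\k,\k_\b)$ by the first equality; a one-dimensional simple submodule of $\Soc(I(\k)/\k)$ is some $\k_\b$, whose multiplicity is $\dim\Ext(\k_\b,\k)$ by the second equality (here $\k=\k_\e$). In either case a non-zero multiplicity would produce a non-trivial skew-primitive element of $H^*$, contrary to hypothesis, so no such one-dimensional submodule exists. I expect the computation identifying $\Ext$ with skew-primitives---keeping track of the convention so that non-split extensions correspond exactly to skew-primitives outside the line $\k(\g-\b)$---to be the only delicate point; the rest is formal.
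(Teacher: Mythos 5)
Your proof is correct and follows essentially the same route as the paper: the same long exact sequences from $0\to JP(V)\to P(V)\to V\to 0$ and $0\to W\to I(W)\to I(W)/W\to 0$, together with the fact that maps into (resp.\ from) a simple module factor through the head (resp.\ land in the socle). The only difference is that for the final assertion the paper simply cites \cite[Lemma 2.3]{EG03} for the vanishing of $\Ext$ between one-dimensional modules, whereas you prove that identification of extensions with $(\g,\b)$-skew-primitives of $H^*$ directly; your computation, including the identification of split extensions with the line $\k(\g-\b)$, is accurate.
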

\begin{proof}
Let us abbreviate $P(V)$ as $P$, and consider the natural exact sequence
$$
0 \to JP \to P \to V \to 0\,.
$$
For any simple $H$-module $W$, we have the associated long exact sequence
$$
0 \to \Hom_H(V, W) \to \Hom_H(P, W) \to \Hom_H(JP, W) \to \Ext(V, W) \to \Ext(P,W)\,.
$$
Since $\Ext(P,W)=0$ and $\Hom_H(V, W) \to \Hom_H(P, W)$ is an isomorphism, we find
$$
\Hom_H(JP/J^2P, W)\cong \Hom_H(JP, W)  \stackrel{\cong}{\to} \Ext(V, W)  \,.
$$
Therefore, $[JP/J^2P: W] = \dim \Ext(V, W)$. The second equality can be obtained similarly by considering
the exact sequence
$$
0 \to W \to I(W) \to I(W)/W \to 0\,.
$$

If $H^*$ does not have any non-trivial skew primitive element, then, by \cite[Lemma 2.3]{EG03}, $\Ext(V, W)=0$ for any 1-dimensional $H$-modules $V,W$. Therefore,
$$
[JP(\k)/J^2P(\k): W] =[\Soc (I(\k)/\k) : W] = 0
$$
for all 1-dimensional $H$-modules $W$.
\end{proof}
The dimensions of projective modules over a finite-dimensional Hopf algebra are of particular importance to the remainder of this paper. The following fact on linear algebra is quite useful to study these dimensions.
\begin{lem}[{\cite[Lemma 1.4]{Ng05}}]\label{l:known}
          Let $V$ be a finite-dimensional vector space over the field $\k$,
          $p$ a prime, and $T$ a linear automorphism on $V$ such that
          $\Tr(T) = 0$. If $T^{p^n} = \id_V$ for some positive
          integer $n$, then $p$ divides the dimension of $V$.
\end{lem}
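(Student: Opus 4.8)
The plan is to diagonalize $T$ and reinterpret the hypothesis $\Tr(T)=0$ as an integer linear relation among $p^n$-th roots of unity, which the cyclotomic polynomial $\Phi_{p^n}$ will force to carry a factor of $p$. First I would observe that, since $\k$ is algebraically closed of characteristic zero and $T^{p^n}=\id_V$, the minimal polynomial of $T$ divides $x^{p^n}-1$, a separable polynomial over $\k$; hence $T$ is diagonalizable and all of its eigenvalues are $p^n$-th roots of unity. Fixing a primitive $p^n$-th root of unity $\zeta \in \k$, every eigenvalue of $T$ is of the form $\zeta^j$ for some $0 \le j < p^n$. Writing $m_j \ge 0$ for the multiplicity of $\zeta^j$, we record
\[
\dim V = \sum_{j=0}^{p^n-1} m_j \qquad\text{and}\qquad \Tr(T)=\sum_{j=0}^{p^n-1} m_j\, \zeta^j = 0 .
\]

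Next I would package the multiplicities into the integer polynomial $f(x)=\sum_{j=0}^{p^n-1} m_j\, x^j \in \BZ[x]$, so that the second identity above says exactly $f(\zeta)=0$. Because $\zeta$ is a primitive $p^n$-th root of unity, its minimal polynomial over $\BQ$ is $\Phi_{p^n}$; thus no nonzero rational polynomial of degree less than $\deg \Phi_{p^n}$ vanishes at $\zeta$, and polynomial division of $f$ by $\Phi_{p^n}$ in $\BQ[x]$ has zero remainder. Hence $\Phi_{p^n} \mid f$ in $\BQ[x]$, and since $\Phi_{p^n}$ is monic with integer coefficients, Gauss's lemma upgrades this to a factorization $f = \Phi_{p^n}\, g$ with $g \in \BZ[x]$. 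Note that this divisibility is intrinsic to $\BZ[x]$ and does not depend on the particular algebraically closed field $\k$, only on $\zeta$ being a primitive $p^n$-th root.

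Finally, evaluating at $x=1$ and using the classical value $\Phi_{p^n}(1)=p$ (immediate from $\Phi_{p^n}(x)=\sum_{k=0}^{p-1} x^{k p^{n-1}}$) yields
\[
\dim V = f(1) = \Phi_{p^n}(1)\, g(1) = p\, g(1),
\]
with $g(1) \in \BZ$, so $p \mid \dim V$. The main obstacle, and the only genuinely non-formal ingredient, is this number-theoretic core: recognizing the vanishing trace as a $\BZ$-combination of roots of unity and extracting $p$ from $\Phi_{p^n}(1)=p$ via the irreducibility of $\Phi_{p^n}$ over $\BQ$. The remaining steps—diagonalizability in characteristic zero and the integrality of the quotient $g$—are routine.
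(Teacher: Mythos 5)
Your argument is correct and complete: diagonalizability follows from the separability of $x^{p^n}-1$ in characteristic zero, the vanishing trace becomes an integer relation $f(\zeta)=0$ for a primitive $p^n$-th root of unity $\zeta$, and the irreducibility of $\Phi_{p^n}$ over $\BQ$ together with $\Phi_{p^n}(1)=p$ forces $p\mid f(1)=\dim V$. Note that the paper states this lemma without proof, importing it from \cite[Lemma 1.4]{Ng05}; your proof is essentially the standard argument of that reference (there phrased via reduction modulo the prime of $\BZ[\zeta]$ above $p$), so there is nothing in the present paper to compare against beyond the citation.
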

We close this section with the following corollary which is an application of the preceding lemma.
\begin{cor}\label{cor:proj}
Let $H$ be a non-semisimple Hopf algebra over $\k$ with antipode $S$.
If $\ord(S^2)=p^n$ for some prime $p$, then every indecomposable projective
$H$-module $P$ such that $P \cong P\bidu$ as $H$-modules has dimension divisible by $p$.
\end{cor}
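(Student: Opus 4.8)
The plan is to realize the self-duality $P\cong P\bidu$ as an explicit automorphism of $P$ that intertwines the $H$-action with its $S^2$-twist, to extract from it a \emph{diagonalizable} operator of order $\ord(S^2)=p^n$ whose trace is zero, and then to invoke Lemma \ref{l:known}.

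First I would translate the hypothesis into a twisted isomorphism. Write $\sigma:=S^2$, so $\sigma^{p^n}=\id_H$, and $P=He$ for a primitive idempotent $e$. By \eqref{eq:doubledual} we have $\ld{S^2}P\cong P\bidu\cong P$, so the isomorphism class of $P$ is fixed by $\sigma$; in particular $He\cong H\sigma(e)$ as left $H$-modules, whence $e$ and $\sigma(e)$ are conjugate by a unit, say $\sigma(e)=ueu\inv$. I then define $\phi\colon P\to P$ by $\phi(x)=\sigma(x)u$. Since $\phi(He)=H\sigma(e)u=Hue\subseteq He$ and $\phi$ is the restriction of the bijection $x\mapsto\sigma(x)u$ of $H$, it is a $\k$-linear automorphism of $P$ satisfying $\phi(hx)=\sigma(h)\phi(x)$, i.e. $\phi\,\rho(h)=\rho(\sigma(h))\,\phi$, where $\rho$ denotes the representation afforded by $P$. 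Because $\sigma^{p^n}=\id_H$, the operator $\phi^{p^n}$ commutes with $\rho(H)$ and hence lies in $\End_H(P)$.

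The heart of the argument --- and the step I expect to be the main obstacle --- is to show $\Tr(\phi)=0$; this is exactly where non-semisimplicity must enter, through the Larson--Radford identity $\Tr(S^2\circ r(b))=0$. To exploit it, I would consider the operator $F:=r(ue)\circ S^2$ on $H$, that is $F(x)=\sigma(x)\,ue$. Its image lies in $Hue\subseteq He$, and for $x\in He$ one has $F(x)=\phi(x)\,e=\phi(x)$ since $\phi(x)\in He$. Using the decomposition $H=He\oplus H(1-e)$ together with the fact that $F$ has image in $He$, the trace of $F$ on $H$ equals the trace of $\phi$ on $He=P$. On the other hand $\Tr(F)=\Tr(r(ue)\circ S^2)=\Tr(S^2\circ r(ue))=0$ by Larson--Radford applied to $b=ue$. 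Hence $\Tr(\phi)=0$.

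Finally I would normalize $\phi$ to meet the hypotheses of Lemma \ref{l:known}. Let $\phi=\phi_s\phi_u$ be the multiplicative Jordan decomposition. Since $\phi^{p^n}\in\End_H(P)$, which is a local ring with residue field $\k$, the unit $\phi^{p^n}$ has the form $c\cdot\id+N$ with $c\in\k^\times$ and $N$ nilpotent; comparing semisimple parts gives $\phi_s^{\,p^n}=c\cdot\id$. Choosing $\lambda\in\k^\times$ with $\lambda^{p^n}=c$ and setting $T:=\lambda\inv\phi_s$, I obtain a diagonalizable automorphism with $T^{p^n}=\id_P$, and, since the trace of an operator equals that of its semisimple part, $\Tr(T)=\lambda\inv\Tr(\phi_s)=\lambda\inv\Tr(\phi)=0$. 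Lemma \ref{l:known} then yields $p\mid\dim P$, as desired. The two places demanding care are the trace computation in the previous paragraph and the verification that passing to the semisimple part $\phi_s$ preserves both the order $p^n$ and the vanishing of the trace.
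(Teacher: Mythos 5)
Your proof is correct, and it reaches the conclusion by the same overall strategy as the paper — manufacture a linear automorphism of $P$ of order $p^n$ with vanishing trace and feed it to Lemma \ref{l:known} — but where the paper simply cites two external results, you supply self-contained arguments, and those arguments hold up. The paper's proof is three lines: $P\cong\ld{S^2}P$ by \eqref{eq:doubledual}, then \cite[Lemma A.2]{HN} produces an $H$-module isomorphism $\varphi:P\to\ld{S^2}P$ with $\varphi^{p^n}=\id$, and \cite[Lemma 1.3]{Ng07} gives $\Tr(\varphi)=0$. Your explicit realization $\phi(x)=S^2(x)u$ on $P=He$ with $S^2(e)=ueu\inv$, together with the computation $\Tr(\phi)=\Tr\bigl(S^2\circ r(ue)\bigr)=0$ via the block-triangular shape of $r(ue)\circ S^2$ on $H=He\oplus H(1-e)$, is in substance a proof of the cited trace lemma, and it correctly isolates where non-semisimplicity enters (Larson--Radford condition (v)). Your normalization step is also sound: $\End_H(P)$ is local with residue field $\k$ because $P$ is indecomposable and $\k$ is algebraically closed, so $\phi^{p^n}=c\,\id+N$ with $N$ nilpotent, uniqueness of the multiplicative Jordan decomposition gives $\phi_s^{p^n}=c\,\id$, and $\Tr(\phi_s)=\Tr(\phi)$; this replaces \cite[Lemma A.2]{HN}. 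Two small remarks: Lemma \ref{l:known} does not require diagonalizability (and in characteristic zero $T^{p^n}=\id$ forces it anyway), so the only purpose of passing to $\phi_s$ is to kill the unipotent factor in $\phi^{p^n}$; and your argument in fact shows that \emph{every} $H$-module isomorphism $P\to\ld{S^2}P$ has trace zero, since any such map is $\phi$ composed with right multiplication by a unit of $eHe$ and the same Larson--Radford computation applies. The trade-off is transparency versus length: your version makes the corollary independent of \cite{HN} and \cite{Ng07} at the cost of the idempotent bookkeeping.
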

\begin{proof}
Let $P$ be an indecomposable projective $H$-module such that $P \cong P\bidu$. In view of \eqref{eq:doubledual},
$P\cong {_{S^2}P}$. By \cite[Lemma A.2]{HN}, there exists an $H$-module isomorphism $\varphi:
P\rightarrow {_{S^2}P}$ such that $\varphi^{p^n}=\id$. It follows from \cite[Lemma 1.3]{Ng07} that $\Tr(\varphi)=0$.
Therefore, Lemma \ref{l:known} implies that $p$ divides $\dim P$.
\end{proof}

Sweedler's 4-dimensional Hopf algebra $H_4$ is of particular interest in this paper. The Hopf algebra $H_4$ is identical to the Taft algebra $T_2$, which is generated as a $\k$-algebra by
$x,g$ subject to the relations
\begin{equation}\label{eq:Taft}
g^2=1,\quad x^2=0\quad \text{and}\quad gx=-xg\,.
\end{equation}
  The comultiplication $\Delta_{H_4}$, the counit $\e_{H_4}$, and the antipode $S_{H_4}$ are
given by
  $$
  \Delta_{H_4}(x)=x \o g + 1 \o x, \quad \e_{H_4}(x)=0, \quad S_{H_4}(x)=-xg,
  $$
   $$
  \Delta_{H_4}(g)=g \o g , \quad \e_{H_4}(g)=1, \quad S_{H_4}(g)=g\,.
  $$
This notation will be used frequently in Sections \ref{s:braidedHopf} and \ref{s:dim 5}.
\begin{remark}\label{r:H4}
The Hopf algebra $H_4$ is self-dual, and it is the unique non-semisimple Hopf algebra over $\k$ of dimension 4 up to isomorphism.
\end{remark}

\section{Non-semisimple Hopf Algebras with dimension $4p$}
In this section, we will focus on non-semisimple Hopf algebras $H$ of dimension $4p$, where $p$ is an odd prime, and we prove Theorem \ref{t:Gorder}.  In particular,  this completes the classification of non-semisimple Hopf algebras $H$ of dimension $4p$ with $|G(H)|$ or $|G(H^*)| > 2$ by applying the result of Andruskiewitsch and Natale \cite[Lemma A.1]{AN1}. It will be shown in Section \ref{s:main} that this condition on the group-like elements holds for $p \le 11$.

Throughout the remainder of this paper, we assume $p$ is an odd prime, and $H$ is a non-semisimple Hopf algebra of dimension $4p$. We begin with
\begin{lem}\label{l:2p}\mbox{} \enumeri{
\item If $H$
contains a Hopf subalgebra $K$ of dimension $2p$, then $H$ is
pointed and $K$ is isomorphic to a cyclic group algebra.
\item If $H$
contains a Hopf ideal $I$ of dimension $2p$, then $H^*$ is
pointed and $H/I$  is isomorphic to a cyclic group algebra.
}
\end{lem}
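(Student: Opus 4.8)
The plan is to derive (ii) from (i) by duality and then concentrate on (i). For (ii), a Hopf ideal $I$ with $\dim I=2p$ yields a quotient Hopf algebra $H/I$ of dimension $2p$, so its dual $(H/I)^*$ is a Hopf subalgebra of $H^*$ of dimension $2p$. Since $H^*$ is again non-semisimple of dimension $4p$, applying (i) to $H^*$ gives that $H^*$ is pointed and $(H/I)^*$ is a cyclic group algebra; dualizing, $H/I$ is a cyclic group algebra. So the whole content sits in (i).

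For (i), I would first apply the Nichols--Zoeller theorem: $H$ is free as a left $K$-module, whence $[H:K]=2$. By the classification of Hopf algebras of dimension $2p$ (cf. \cite{Ng05}), $K$ is semisimple, hence isomorphic to $\k[\BZ_{2p}]$, to the dihedral group algebra $\k[D_p]$ of order $2p$, or to $\k[D_p]^*$. Being semisimple in characteristic zero, $K$ is involutory and cosemisimple, so $S^2|_K=\id_K$ and $K\subseteq H_0$, the coradical of $H$; non-semisimplicity of $H$ forces $H_0\neq H$, so $2p\le \dim H_0<4p$.

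Next I would pin down the antipode. By \eqref{eq:radfordeq}, $S^4$ has finite order, so $S^2$ is diagonalizable with root-of-unity eigenvalues, and its $(+1)$-eigenspace contains $K$, of dimension $2p=\tfrac12\dim H$. As $\Tr(S^2)=0$ by non-semisimplicity, the triangle-inequality applied to the eigenvalues forces them to be exactly $+1$ and $-1$, each of multiplicity $2p$, with the $(+1)$-eigenspace equal to $K$. Hence $\ord(S^2)=2$, and Corollary \ref{cor:proj} applies with the prime $2$: every indecomposable projective $P$ with $P\cong P\bidu$ has even dimension. In particular each $P(\k_\beta)$ for $\beta\in G(H^*)$, and each self-dual higher-dimensional projective, has even dimension, which constrains the decomposition $4p=\sum_{V\in\Irr(H)}\dim V\cdot\dim P(V)$.

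The crux is to show that $H$ is pointed, equivalently that every simple subcoalgebra of $H$ is $1$-dimensional. The plan is to combine three inputs: the containment $K\subseteq H_0\subsetneq H$, so the non-coradical part has dimension at most $2p$; the even-dimensionality of self-dual projectives just obtained; and the $\Ext$/skew-primitive description of $JP/J^2P$ from Lemma \ref{l:primitive}. Together these should force $H_0=K$ with $H_0$ cocommutative, i.e. $H_0=\k[G(H)]$. Once $H$ is known to be pointed, \cite[Lemma A.1]{AN1} gives that $G(H)$ is cyclic of order $2p$, so $H_0=\k[\BZ_{2p}]$; since $K\subseteq H_0$ and $\dim K=\dim H_0=2p$, we conclude $K=H_0=\k[\BZ_{2p}]$ is a cyclic group algebra, which also retroactively excludes $K\cong\k[D_p]$ and $K\cong\k[D_p]^*$. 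I expect the genuine obstacle to be exactly this pointedness step, carried out without circularly invoking Theorem \ref{t:Gorder}: the delicate configurations to eliminate are a non-pointed $H$ whose coradical contains the non-cocommutative $\k[D_p]^*$, and a pointed but non-cyclic candidate with $K\cong\k[D_p]$. The identity $\dim H=2\dim K$ together with the $\ord(S^2)=2$ divisibility constraints should be just enough to rule these out, but making that precise is where the real work lies.
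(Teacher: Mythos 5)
Your reduction of (ii) to (i) by duality, the appeal to \cite{Ng05} for the semisimplicity of $K$, and the trace--eigenvalue argument showing that $S^2$ is $+1$ on $K$ and $-1$ on a complement (hence $S^4=\id_H$ and $\ord(S^2)=2$) all coincide with the paper's proof and are correct. The gap is precisely the step you yourself flag as ``where the real work lies'': you never prove that $H$ is pointed, and the inputs you propose to combine do not obviously yield it. The paper closes this in two moves, both by citation. First, \cite[Proposition 5.1]{AnSc98} gives that $K$ is \emph{equal to} the coradical $H_0$; you only obtain $K\subseteq H_0\subsetneq H$, and a priori $2p<\dim H_0<4p$ is possible, since $H_0$ need not be a Hopf subalgebra and its dimension is not constrained by Nichols--Zoeller. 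The relevant mechanism here is that $H_0$ is $S^2$-stable and cannot meet the $(-1)$-eigenspace of $S^2$ nontrivially --- this is coalgebra-theoretic information about $H$ itself, not something extractable from the parity of dimensions of self-dual projective $H$-modules (Corollary \ref{cor:proj}) or from Lemma \ref{l:primitive}, both of which concern the module category of $H$, i.e.\ the coalgebra structure of $H^*$. Second, \cite[Lemmas A.1 and A.2]{AN1} is applied to a non-semisimple Hopf algebra of dimension $pq^2$ whose coradical is a Hopf subalgebra of dimension $pq$, to conclude that $H$ is pointed and $K=\k[G(H)]$ with $G(H)$ cyclic of order $2p$.

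This second step is exactly where the case $K\cong\k[D_p]^*$ must be eliminated: that Hopf algebra is semisimple of dimension $2p$ but not cocommutative, so if it were the coradical then $H$ would contain $4$-dimensional simple subcoalgebras and would not be pointed. You correctly identify this as the delicate configuration, but the identity $\dim H=2\dim K$ together with $\ord(S^2)=2$ is not by itself enough to rule it out; the cited lemma of Andruskiewitsch--Natale does genuine work (a lifting-type analysis over the coradical). As written, your argument establishes the hypotheses under which the paper's two citations apply, but not the conclusion of the lemma.
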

\begin{proof}
(i) By \cite{Ng05}, $K$ is semisimple and
so $S_K^2 =\id_K$.  Therefore, there are at least $2p$ eigenvalues of $S^2$,
including multiplicities, which are 1. Since $H$ is not
semisimple, $\Tr(S^2)=0$ and this forces the remaining $2p$
eigenvalues of $S^2$ to be all -1. Thus, $S^4=\id_H$ and the order
of $S^2$ is 2. By \cite[Proposition 5.1]{AnSc98}, $K$ is equal to
the coradical of $H$. It follows from \cite[Lemmas A.1 and A.2]{AN1}
that $H$ is pointed, $K=\k[G(H)]$ and $G(H)$ is a cyclic group of
order $2p$. \medskip\\
(ii) If $I$ is a Hopf ideal of dimension $2p$, then $H/I$ is a Hopf algebra of dimension $2p$ and
$H^*$ admits a Hopf subalgebra isomorphic to $(H/I)^*$. By (i), $H^*$ is pointed and $H/I$ is isomorphic to a cyclic group algebra.
\end{proof}
We precede our discussion with the following remark.
\begin{remark}\label{r:subhopf}
Let $K$ be a commutative semisimple Hopf subalgebra of $H$. By
Nichols-Zoeller Theorem, $H$ is a right free $K$-module of rank $\dim H /\dim K$.
  If $E$ is the set of orthogonal primitive idempotents of $K$, then $H=\bigoplus\limits_{e \in E} He$ is a
decomposition of left $H$-module and $\dim He = \dim H/\dim K$ for all $e \in E$.
\end{remark}

\begin{lem}\label{l:p}
If $p \mid |G(H)|$, then $H$  is pointed.
\end{lem}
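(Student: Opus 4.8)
The plan is to pin down $|G(H)|$ first, dispatch the easy value with Lemma \ref{l:2p}, and then devote the real work to excluding the single remaining possibility via a counting argument on $H^*$.

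First I would apply the Nichols--Zoeller theorem to the Hopf subalgebra $\k[G(H)]$: its dimension $|G(H)|$ divides $\dim H=4p$, and since $p\mid |G(H)|$ this leaves $|G(H)|\in\{p,2p,4p\}$. The value $4p$ is excluded because then $H=\k[G(H)]$ would be a group algebra, hence semisimple. If $|G(H)|=2p$, then $\k[G(H)]$ is a Hopf subalgebra of dimension $2p$ and Lemma \ref{l:2p}(i) gives at once that $H$ is pointed. So the entire content of the lemma reduces to ruling out $|G(H)|=p$; once that is done, only $|G(H)|=2p$ survives and we are finished.

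To exclude $|G(H)|=p$ I would pass to the dual. Since $H$ is non-semisimple, so is $H^*$, and $\dim H^*=4p$. The one-dimensional simple $H^*$-modules are exactly the algebra maps $H^*\to\k$, i.e. the group-likes of $H^{**}=H$, so there are precisely $p$ of them; they form the group $\Gamma:=G(H)\cong\BZ_p$, which acts on $\Irr(H^*)$ by $U\mapsto \k_\gamma\o U$ and permutes the one-dimensional simples simply transitively. As tensoring with a one-dimensional module is an auto-equivalence, all $p$ of them have a common projective-cover dimension $t:=\dim P_{H^*}(\k)$, and $t\ge 2$ because $H^*$ is non-semisimple. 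Decomposing the regular representation then gives
$$
4p=\dim H^*=\sum_{U\in\Irr(H^*)}(\dim U)(\dim P_{H^*}(U))=p\,t+R,\qquad R:=\sum_{\dim U\ge 2}(\dim U)(\dim P_{H^*}(U)),
$$
so $t\le 4$, and I would split into $t\in\{2,3\}$ and $t=4$. For $t\in\{2,3\}$ we have $0<R=p(4-t)\le 2p$, so at least one higher-dimensional simple $U$ occurs; its $\Gamma$-orbit has size $1$ or $p$, and a size-$p$ orbit would contribute at least $p\,(\dim U)^2\ge 4p>R$, so every such $U$ is $\Gamma$-stable. The counting fact that the stabilizer order divides $(\dim U)^2$ forces $p\mid\dim U$, whence $\dim U\ge p$ and $U$ alone contributes $(\dim U)(\dim P_{H^*}(U))\ge(\dim U)^2\ge p^2>2p\ge R$, a contradiction. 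For $t=4$ we get $R=0$, so every simple $H^*$-module is one-dimensional; equivalently $H=(H^*)^*$ is pointed with $G(H)\cong\BZ_p$, which contradicts \cite[Lemma A.1]{AN1}, by which a non-semisimple pointed Hopf algebra of dimension $4p$ has $|G(H)|=2p$. Hence $|G(H)|\ne p$, and by the first paragraph $H$ is pointed.

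I expect the main obstacle to be precisely the case $|G(H)|=p$, and within it the two ingredients carrying the weight: the divisibility $p\mid\dim U$ for $\Gamma$-stable simples, which is a Clifford-type counting statement ($|\mathrm{Stab}\,U|\mid(\dim U)^2$) that I would either quote or reprove using $H^2(\BZ_p,\k^\times)=0$, and the disposal of the borderline value $t=4$, where one has to recognise that $H$ has become pointed and then invoke the classification of pointed Hopf algebras of dimension $pq^2$ to contradict $|G(H)|=p$.
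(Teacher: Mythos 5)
Your proof is correct, but it takes a genuinely different route from the paper's. The paper also starts from an element $g$ of order $p$, but its argument runs through the module theory of $H$ itself: the free decomposition $H=\bigoplus_i He_i$ over $\k[g]$ forces $\dim P(V)\le 4$, hence all composition factors of $P(\k)$ are $1$-dimensional, hence $H^*$ has a non-trivial skew primitive (Lemma \ref{l:primitive}); then \cite[Proposition 1.8]{AN1} produces a non-semisimple pointed Hopf subalgebra $K\subseteq H^*$ of dimension $4$ or $4p$, and the hard subcase $K\cong H_4$ is resolved by building the exact sequence $1\to\k[g]\to H\to H_4\to 1$, dualizing, and invoking \cite[Proposition 1.6]{HN} to produce a $2p$-dimensional Hopf subalgebra, after which Lemma \ref{l:2p} finishes. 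You instead reduce everything to excluding $|G(H)|=p$ and do so by a dimension count on $\Irr(H^*)$ using the $\BZ_p$-action $U\mapsto\k_\gamma\o U$, which avoids skew primitives, exact sequences, and both of the cited propositions; the price is the divisibility $|\mathrm{Stab}(U)|\mid(\dim U)^2$, which the paper never needs. That divisibility is indeed true and quotable, but the clean justification is not the cohomological one you sketch: it is the Nichols--Zoeller freeness theorem applied to the simple subcoalgebra $C\subseteq H$ attached to $U$ (one has $\k[\Gamma]\,C=C$, so $C$ is a relative Hopf module over $\k[\Gamma]$ and hence free, giving $p\mid\dim C=(\dim U)^2$); the $H^2(\BZ_p,\k^\times)=0$ route would require extra work to set up projective actions in the Hopf setting. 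With that lemma supplied, your case analysis ($t\in\{2,3\}$ killed by the orbit/stabilizer count since $p(\dim U)^2\ge 4p$ and $(\dim U)^2\ge p^2>2p$; $t=4$ killed because $H$ would be pointed with $|G(H)|=p$, contradicting \cite[Lemma A.1]{AN1}) is complete, and it is also free of circularity since you do not use Corollary \ref{c:Sorder}, whose proof depends on this lemma.
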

\begin{proof}
Let $g\in G(H)$ such that $\ord(g)=p$. First we show that the
composition factors of $P(\k)$ are 1-dimensional.

Let $\{e_1, \dots, e_p\}$ be the complete set of orthogonal primitive
idempotents of $\k[g]$. By the preceding remark,  $H=\bigoplus_i
He_i$ is a decomposition of left $H$-modules, and $\dim H e_i = 4$. By the Krull-Schmidt
Theorem, $\dim P(V) \le 4$ for all $V \in \Irr(H)$. Since $H$
is not semisimple, $\k$ is not projective. Therefore, $2 \le \dim
P(\k)\le 4$.

If $\dim P(\k)=2,3$, then all its composition factors are
1-dimensional. Suppose $\dim P(\k)=4$ and it has a composition factor
$V$ of dimension greater than 1. Then $\dim V =2$, $[P(\k):V]=1$ and
$V$ is not projective. By Lemma \ref{l:composition},
$$
\dim P(V) \ge 2 \dim V +1 > 4, \quad \text{a contradiction!}
$$

Since $P(\k)$ is not simple and all its composition factors  are
1-dimensional, it follows from Lemma  \ref{l:primitive} that $H^*$ admits a non-trivial skew primitive element. Now, we
apply \cite[Proposition 1.8]{AN1} to conclude that $H^*$ contains a
non-semisimple pointed Hopf subalgebra $K$ of dimension $M^2N$
with $M >1$. By Nichols-Zoeller Theorem, $M^2N\big|4p$ and this implies $M=2$
and $N=1$ or $p$.

If $N=p$, then $K=H^*$. By the classification \cite[Lemma A.1]{AN1} of pointed Hopf algebras of dimension $4p$,  either $K^*$ is pointed or $|G(K^*)|=2$. Since $p \mid |G(H)|$, $K^*$ is pointed and  so is $H$.

If $N=1$, then $K$ is isomorphic to the Sweedler algebra $H_4$. By dualizing the inclusion map $K\rightarrow H^*$, we
find a Hopf algebra surjection $\pi: H \to H_4$. Moreover, the
coinvariant $R=H^{\mathrm{co}\,\pi}=\{r \in H\mid r_1 \o \pi(r_2) =
r \o 1\}$ is a (left) normal left coideal subalgebra of dimension $p$ (cf. \cite[3.4.2 (2)]{Mont93bk}).
Since $\pi(g)$ is a group-like element of $H_4$,  $\pi(g)=1$ and hence
$R=\k[g]$. In particular, $R$ is a normal Hopf subalgebra of $H$ by \cite[4.5 (a)]{Tk94}. Therefore, we have the exact sequence of Hopf algebras:
$$
1 \to \k[g] \to H \to H_4\to 1\,.
$$
One can dualize the sequence and apply \cite[Proposition 1.6]{HN} to
conclude that $H$ contains a semisimple Hopf subalgebra of dimension
$2p$. Then, by Lemma \ref{l:2p}, $H$ is pointed.
\end{proof}

\begin{cor}\label{c:Sorder}
  Let $S$ be the antipode of $H$. Then $S^{16}=\id_H$. Moreover, for $V \in \Irr(H)$ such that $V \cong V\bidu$, $\dim P(V)$ is even. In particular, $\dim P(\k)$ is an even integer.
\end{cor}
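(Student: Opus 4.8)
The plan is to first use Radford's formula \eqref{eq:radfordeq} to bound $\ord(S)$, and then to feed that bound into Corollary \ref{cor:proj}. Since $S^{16}=\id_H$ is equivalent to $\ord(S^4)\mid 4$, I would begin by factoring $S^4$. Writing $a\in G(H)$ and $\alpha\in G(H^*)$ for the distinguished group-like elements, set $\tau_a(h)=aha\inv$ and $\phi_\alpha(h)=\alpha\rightharpoonup h\leftharpoonup\alpha\inv$. Both are algebra automorphisms of $H$ (the latter because $\alpha$ is group-like, so $\alpha\rightharpoonup(-)$ and $(-)\leftharpoonup\alpha\inv$ are algebra maps), and \eqref{eq:radfordeq} says exactly that $S^4=\tau_a\circ\phi_\alpha$. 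A short computation gives $\phi_\alpha(a)=a$, so $\tau_a$ and $\phi_\alpha$ commute; moreover $\phi_\alpha^{\,n}(h)=\alpha^n\rightharpoonup h\leftharpoonup\alpha^{-n}$ and $\tau_a^{\,n}=\tau_{a^n}$, whence $\ord(\phi_\alpha)\mid\ord(\alpha)$ and $\ord(\tau_a)\mid\ord(a)$. Consequently $\ord(S^4)\mid\lcm(\ord(a),\ord(\alpha))$.

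Next I would run a case analysis on whether $p$ divides $|G(H)|$ or $|G(H^*)|$. Because $\k[G(H)]$ and $\k[G(H^*)]$ are Hopf subalgebras, $\ord(a)\mid 4p$ and $\ord(\alpha)\mid 4p$. If $p$ divides neither order, then $\ord(a),\ord(\alpha)\in\{1,2,4\}$, so $\lcm(\ord(a),\ord(\alpha))\mid 4$ and hence $\ord(S^4)\mid 4$. If instead $p\mid|G(H)|$ (resp.\ $p\mid|G(H^*)|$), then Lemma \ref{l:p} applied to $H$ (resp.\ to $H^*$, which is again non-semisimple of dimension $4p$) shows that $H$ (resp.\ $H^*$) is pointed, so it is one of the Hopf algebras in the list \eqref{eq:completelist}; a direct computation shows each of these has $\ord(S^2)=2$, i.e.\ $S^4=\id$. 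Since $\ord(S_H)=\ord(S_{H^*})$, this gives $\ord(S^4)\mid 4$ in every case, and therefore $S^{16}=\id_H$.

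Finally, the remaining assertions drop out of Corollary \ref{cor:proj}. As $H$ is non-semisimple, $S^2\neq\id_H$, and $\ord(S^4)\mid 4$ forces $\ord(S^2)\in\{2,4,8\}$, a power of the prime $2$. Corollary \ref{cor:proj} then yields that every indecomposable projective $P$ with $P\cong P\bidu$ has even dimension. For a simple $V$ with $V\cong V\bidu$, the double-dual functor---an autoequivalence of $\C{H}$---sends $P(V)$ to the projective cover of $V\bidu\cong V$, so $P(V)\bidu\cong P(V)$ and $\dim P(V)$ is even; taking $V=\k$ (which is self-dual) gives the final statement. The main obstacle is the case $p\mid|G(H)|$ or $p\mid|G(H^*)|$: there the bound $\ord(S^4)\mid\lcm(\ord(a),\ord(\alpha))$ is too weak, since those orders may be divisible by $p$, and one must instead invoke Lemma \ref{l:p} to reduce to the classified pointed Hopf algebras and then compute $S^2$ directly on each member of \eqref{eq:completelist}.
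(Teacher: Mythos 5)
Your proposal is correct and follows essentially the same route as the paper: split according to whether $p$ divides $\ord(a)\ord(\alpha)$, handle the divisible case via Lemma \ref{l:p} and the pointed classification of \cite[Lemma A.1]{AN1}, handle the other case via Radford's formula \eqref{eq:radfordeq}, and deduce the parity statements from Corollary \ref{cor:proj}. The only difference is that you spell out the commutation of $\tau_a$ and $\phi_\alpha$ and the bound $\ord(S^4)\mid\lcm(\ord(a),\ord(\alpha))$, which the paper leaves implicit.
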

\begin{proof}
  Let $\a \in H^*$ and $a \in H$ be the distinguished group-like elements. If $p$ divides $\ord(\a)\ord(a)$, then $p \mid |G(H)|$ or $p \mid |G(H^*)|$. It follows from Lemma \ref{l:p} that $H$ or $H^*$ is pointed. By \cite[Lemma A.1]{AN1}, $S^4=\id_H$. On the other hand, if $p \nmid \ord(\a)\ord(a)$, then $\ord(\a)$ and $\ord(a)$ are factors of $4$. Thus, the statement follows from  Radford's formula \eqref{eq:radfordeq} of $S^4$.

  If $V \in \Irr(H)$ is isomorphic to $V\bidu$, then $P(V) \cong P(V)\bidu$, and the second statement follows immediately from Corollary \ref{cor:proj}.
\end{proof}
\begin{lem}\label{l:4}
The order of $G(H)$ is not divisible by $4$ and $S^8=\id$.
\end{lem}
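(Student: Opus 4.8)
The plan is to pin down the possible orders of $G(H)$ and then read off the order of the antipode from Radford's formula. First I would invoke the Nichols--Zoeller theorem: $|G(H)|$ divides $\dim H=4p$, and the only such divisors that are multiples of $4$ are $4$ and $4p$. The value $4p$ is impossible, since then $H=\k[G(H)]$ would be semisimple. If $p\mid|G(H)|$, then Lemma~\ref{l:p} shows $H$ is pointed, and \cite[Lemma A.1]{AN1} forces $G(H)$ to be cyclic of order $2p$ (so $4\nmid|G(H)|$) together with $S^4=\id_H$. Hence it remains to treat $p\nmid|G(H)|$, where $|G(H)|\in\{1,2,4\}$, and the whole statement reduces to excluding $|G(H)|=4$.

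For the antipode assertion I would argue that, once $4\nmid|G(H)|$ is known for \emph{every} non-semisimple $H$ of dimension $4p$, the same applies to $H^*$, giving $4\nmid|G(H^*)|$. If $p$ divides $|G(H)|$ or $|G(H^*)|$, then $H$ or $H^*$ is pointed and $S^4=\id_H$, whence $S^8=\id_H$. Otherwise $|G(H)|$ and $|G(H^*)|$ are divisors of $4p$ divisible by neither $4$ nor $p$, so they lie in $\{1,2\}$; thus the distinguished group-likes satisfy $a^2=1$ and $\alpha^2=\e$. Iterating Radford's formula \eqref{eq:radfordeq} — after checking that the inner automorphism $h\mapsto aha^{-1}$ commutes with the winding automorphism $h\mapsto\alpha\rightharpoonup h\leftharpoonup\alpha^{-1}$, so that $S^{4k}(h)=a^k(\alpha^k\rightharpoonup h\leftharpoonup\alpha^{-k})a^{-k}$ — and setting $k=2$ yields $S^8(h)=a^2(\alpha^2\rightharpoonup h\leftharpoonup\alpha^{-2})a^{-2}=h$.

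The substance is therefore to rule out $|G(H)|=4$. Here I would exploit $\k[G(H)]$, a commutative semisimple Hopf subalgebra of dimension $4$: by Remark~\ref{r:subhopf}, $H$ is free of rank $p$ over it, so by Krull--Schmidt every indecomposable projective is a summand of some $He$ and has dimension at most $p$; in particular $\dim P(\k)\le p$, and it is even by Corollary~\ref{c:Sorder}, hence $\dim P(\k)\le p-1$. A complementary lower bound comes from the absence of projective simples: since $\k$ is a direct summand of $W\o W^\vee$ (via evaluation and coevaluation, as $\dim W\neq 0$ in characteristic zero), a projective simple $W$ would make $\k$ projective and $H$ semisimple; thus $\dim P(V)\ge 2\dim V$ for every simple $V$, forcing $\dim V\le (p-1)/2$. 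If $p\mid|G(H^*)|$, then $H^*$ is pointed and $H$ is the dual of a pointed Hopf algebra of dimension $4p$; reading off the classification together with its duals — where the pointed members and $\AA(\tau,1)^\ast$ have $|G|$ equal to $2p$ or $2$ — gives $|G(H)|\in\{2,2p\}$, excluding $4$.

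The remaining, and hardest, case is $|G(H)|=4$ with $p\nmid|G(H^*)|$, so $|G(H^*)|\in\{1,2,4\}$. Here I would combine the identity $\sum_{V}\dim V\,\dim P(V)=4p$ with the two bounds above and their $H^*$-analogues (which give $\dim(H/J)\le 2p$ and $\dim H_0=\dim(H^*/J)\le 2p$) to squeeze the admissible dimension vectors; this already disposes of small $p$, since a non-pointed $H$ forces a matrix subcoalgebra in $H_0$ and hence $\dim H_0\ge 8$. Closing the case uniformly is the main obstacle: I expect to need the skew-primitive analysis of Lemma~\ref{l:primitive} to produce, via \cite[Proposition 1.8]{AN1}, a copy of $H_4$ as a quotient of $H$, and thereby a $2p$-dimensional semisimple Hopf subalgebra of $H$, whose existence would contradict $|G(H)|=4$ by Lemma~\ref{l:2p}.
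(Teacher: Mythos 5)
Your reductions are fine and your treatment of the antipode statement coincides with the paper's: once $4\nmid|G(H)|$ is known (hence also for $H^*$), either $H$ or $H^*$ is pointed and \cite[Lemma A.1]{AN1} gives $S^4=\id$, or else Lemma~\ref{l:p} forces $p\nmid|G(H)||G(H^*)|$, so $a^2=1$ and $\alpha^2=\e$ and Radford's formula yields $S^8=\id$. The problem is that the core of the lemma --- excluding $|G(H)|=4$ when $p\nmid|G(H)||G(H^*)|$ --- is not actually proved in your write-up. You derive the bounds $\dim P(\k)\le p-1$ and $\dim V\le (p-1)/2$ and then say the dimension count ``already disposes of small $p$'' and that you ``expect to need'' a skew-primitive argument to finish. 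That is a genuine gap, not a routine verification: \cite[Proposition 1.8]{AN1} requires a non-trivial skew primitive in $H^*$, which you would have to extract from knowing that $P(\k)$ has a $1$-dimensional composition factor beyond its head and socle; in the proof of Lemma~\ref{l:p} this comes from $\dim P(\k)\le 4$, which is unavailable when the commutative Hopf subalgebra has dimension $4$ rather than $p$. Moreover, even granting an $H_4$ quotient, producing a $2p$-dimensional semisimple Hopf subalgebra from it used the identification of the coinvariants with $\k[g]$ for a group-like $g$ of order $p$ --- again not available here.

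The paper closes this case with a short trace argument that you have all the ingredients for but did not deploy. With $K=\k[G(H)]$ of dimension $4$ and $\{e_1,\dots,e_4\}$ its primitive idempotents, each $He_i$ has dimension $p$ and is stable under $S^2$ (since $S^2$ fixes $K$ pointwise and is an algebra map). By Larson--Radford (v), $\Tr(S^2\circ r(e_i))=0$, and since $r(e_i)$ is the projection of $H=\bigoplus_j He_j$ onto $He_i$ and $S^2$ preserves each block, this says $\Tr(S^2|_{He_i})=0$. Corollary~\ref{c:Sorder} gives $S^{16}=\id$, so $\ord(S^2|_{He_i})$ is a power of $2$, and Lemma~\ref{l:known} forces $2\mid\dim He_i=p$, a contradiction uniform in $p$. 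I would encourage you to rework the $|G(H)|=4$ case along these lines; as it stands, your argument establishes the lemma only for whatever small primes the dimension count happens to handle.
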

\begin{proof}
Suppose  $4 \mid |G(H)|$. Then $|G(H)|=4$ otherwise $|G(H)|=4p$
which implies $H$ is semisimple. The Hopf subalgebra
$K=\k[G(H)]$ is commutative and semisimple. Let
  $\{e_1, e_2, e_3, e_4\}$ be the set of orthogonal primitive idempotents of $K$. In view of Remark \ref{r:subhopf},
 $$
 \dim He_i = p\,.
 $$
 Note that $S^2$ stabilizes $He_i$ and $\Tr(S^2|_{He_i})=0$. It follows from Lemma \ref{l:known}  and Corollary \ref{c:Sorder} that $\dim H e_i$ is even, a contradiction. Therefore, $4 \nmid |G(H)|$.

 If $H$ or $H^*$ is pointed, then the classification of \cite[Lemma A.1]{AN1} implies $S^4=\id$. If neither $H$ nor $H^*$ is pointed, then Lemma \ref{l:p} forces $p\nmid |G(H)||G(H^*)|$. Thus, $\lcm(|G(H)|,|G(H^*)|) \le 2$ and the second part of the statement follows immediately from Radford's formula of $S^4$.
\end{proof}
Now we can prove Theorem \ref{t:Gorder}.
\begin{proof}[Proof of Theorem \ref{t:Gorder}]
  If $|G(H)|>2$, then it follows from Lemma \ref{l:4}  that  $p \mid |G(H)|$. The result then follows immediately from Lemma \ref{l:p}. Conversely, if $H$ is a pointed Hopf algebra of $4p$, then  $G(H)$ is a cyclic group of order $2p$ by \cite[Lemma A.1]{AN1}. This completes the proof of Theorem I.
\end{proof}

We close this section with an observation on 2-dimensional simple $H$-modules.
\begin{lem}\label{l:dual}
Every 2-dimensional simple $H$-module is  not self-dual.
\end{lem}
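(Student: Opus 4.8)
The plan is to argue by contradiction: suppose $V$ is a $2$-dimensional simple $H$-module with $V\cong V\du$, and derive an inconsistency with non-semisimplicity and the dimension $4p$. First I would record the structural consequences of self-duality. Applying $\du$ to $V\cong V\du$ gives $V\du\cong V\bidu$, whence $V\cong V\bidu$, so $\dim P(V)$ is even by Corollary \ref{c:Sorder}. Next, using the adjunctions \eqref{eq:iso} together with $\End_H(V)=\k$ (Schur), one obtains $\Hom_H(\k,V\o V)\cong\Hom_H(V\du,V)\cong\k$, so $\k$ occurs with multiplicity one in the socle of the $4$-dimensional module $V\o V$; and since $(V\o V)\du\cong V\du\o V\du\cong V\o V$ is self-dual, $\k$ occurs with multiplicity one in its head as well. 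Finally, the adjugate identity for $2$-dimensional representations gives $V\du\cong\k_\beta\o V$ with $\beta=(\det V)\inv\in G(H^*)$, and self-duality then yields $\k_\beta\o V\cong V$ and, comparing determinants, $\beta^2=\e$.

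I would then split into cases according to how $p$ meets $|G(H)|$ and $|G(H^*)|$. If $p\mid|G(H^*)|$, then $H^*$ is pointed by Lemma \ref{l:p}; since the simple subcoalgebras of the pointed coalgebra $H^*$ are all $1$-dimensional, every simple $H$-module is $1$-dimensional, so no $2$-dimensional simple $V$ exists and there is nothing to prove. If instead $p\mid|G(H)|$, then $H$ is pointed by Lemma \ref{l:p}, and the classification \cite[Lemma A.1]{AN1} shows that $H$ carries a central group-like element $z$ of order $p$ (for $\AA(\tau,\mu)$ one may take $z=a^2$). By Schur's lemma $z$ acts on $V$ by a $p$-th root of unity $\omega$, which cannot be $1$ since otherwise $V$ would be a simple module over the $4$-dimensional quotient Hopf algebra $H/H(z-1)$, all of whose simples are $1$-dimensional; meanwhile $z$ acts on $V\du$ by $\omega\inv$. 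As $p$ is odd and $\omega\ne1$, we have $\omega\ne\omega\inv$, so $V\not\cong V\du$, contradicting self-duality.

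The remaining case, $p\nmid|G(H)|\,|G(H^*)|$, is where the real work lies, since here no central group-like of order $p$ is available to separate $V$ from $V\du$. By Lemma \ref{l:4} applied to $H$ and to $H^*$ one has $|G(H)|,|G(H^*)|\le2$, so $H$ admits at most two $1$-dimensional simple modules, namely $\k$ and possibly a single $\k_\gamma$ with $\gamma^2=\e$. I would exploit the multiplicity-one appearance of $\k$ in the socle and head of the self-dual module $V\o V$ together with $\dim(V\o V)=4$ to cut down its module structure to a short list of possibilities, each of which either exhibits $\k$ as a direct summand of $V\o V$ or forces a non-split extension between $\k$ and a $2$-dimensional simple. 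Feeding this, the parities $\dim P(\k),\dim P(V)\in2\BZ$ from Corollary \ref{c:Sorder}, and the inequalities of Lemma \ref{l:composition} into the dimension identity $4p=\sum_{W\in\Irr(H)}\dim W\cdot\dim P(W)$, I expect to show that every admissible configuration forces composition-factor counts incompatible with the total dimension $4p$. The main obstacle is precisely this bookkeeping: without the group-like trick one must verify that the constraints on the projective covers $P(\k)$ and $P(V)$ and on the embeddings of $\k$ into $V\o V$ are already rigid enough to be self-contradictory, and this is the crux of the lemma.
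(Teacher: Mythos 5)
Your first two cases are fine as far as they go (and the reduction when $p$ divides $|G(H)|$ or $|G(H^*)|$ via Lemma \ref{l:p} and a central group-like of order $p$ is a clean observation), but the proof has a genuine gap exactly where you locate "the crux": the case $p\nmid|G(H)|\,|G(H^*)|$ is never actually carried out. You describe a hoped-for contradiction from bookkeeping with $V\o V$, the parity constraints of Corollary \ref{c:Sorder}, and the identity $4p=\sum_W\dim W\cdot\dim P(W)$, but you do not exhibit the "short list of possibilities" nor show that each is inconsistent, and I see no reason the count closes for an arbitrary odd prime $p$: nothing in those numerics distinguishes a hypothetical self-dual $2$-dimensional simple from the genuine non-self-dual ones that do occur (e.g.\ over $\AA(\tau,1)$, which satisfies all your parity and multiplicity constraints). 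A separate, smaller problem is the "adjugate identity" $V\du\cong\k_\beta\o V$ with $\beta=(\det V)\inv$: this is a fact about group representations. For a Hopf algebra the map $h\mapsto\det\rho(h)$ is not an algebra homomorphism (so $\det V$ is not an element of $G(H^*)$), and the exterior square of $V$ is not an $H$-module since the flip is not $H$-linear; you would need a genuinely different argument even to formulate this step.

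The paper's proof avoids counting entirely and supplies the missing structural idea: self-duality of $V$ makes $\ann V$ stable under $S$, hence the $4$-dimensional simple subcoalgebra $C=\pi^*((H/\ann V)^*)\subseteq H^*$ is stable under $S^*$, so the subalgebra $K$ it generates is a Hopf subalgebra; Nichols--Zoeller and Lemma \ref{l:2p} force $K=H^*$, i.e.\ $H^*$ is generated by a simple subcoalgebra. One then invokes the exact sequence $1\to\k^G\to H^*\to B\to 1$ from \cite{Na02} to conclude that $H$ is pointed, hence $H\cong\AA(\tau,1)$ by \cite[Lemma A.1]{AN1}, and finally Radford's computation \cite{Radf75} shows the $2$-dimensional simples of $\AA(\tau,1)$ are not self-dual. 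If you want to salvage your outline, you should replace the third case by this coalgebra-generation argument (or find some substitute for Natale's exact sequence); the numerical constraints you list are not rigid enough on their own.
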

\begin{proof}
Suppose $V$ is a simple $H$-module of dimension 2 such that $V\cong V\du$. Then $H^*$ is not pointed otherwise all simple $H$-modules are of dimension 1. Moreover, we have $\ann V = \ann V\du =
S(\ann V)$. Therefore, $S$ induces an isomorphism $\ol S$ on $H/\ann
V$. Since $V$ is simple, $H/\ann V$ is a simple algebra of dimension
4. Let $\pi: H \to H/\ann V$ be the natural surjection. Then $\pi^*:
(H/\ann V)^* \to H^*$ is an injective coalgebra map which satisfies
the commutative diagram:
$$
\xymatrix{
(H/\ann V)^* \ar[r]^-{\pi^*} \ar[d]_-{{\ol S}^*} & H^* \ar[d]^-{S^*}\\
(H/\ann V)^* \ar[r]^-{\pi^*} & H^*  \,. }
$$
In particular, $C=\pi^*((H/\ann V)^*)$ is a simple subcoalgebra of
$H^*$ stabilized by $S^*$.

        Let $K$ be the  subalgebra of $H^*$ generated by $C$. Then $K$ is a Hopf subalgebra of $H^*$ and hence $\dim K=4,p,2p$ or $4p$.
        Since $K$ contains the simple subcoalgebra $C$, $\dim K \neq 4, p$.
        If $\dim K=2p$, then $K$ is a cyclic group algebra by Lemma \ref{l:2p} but this is absurd. So we have $\dim K=4p$, and hence $H^*=K$. By \cite{Na02},
        there exists an exact sequence of Hopf algebras
        $$1\rightarrow \k^G\rightarrow H^*\rightarrow B\rightarrow1,$$
        where $B^*$ is a non-semisimple pointed Hopf algebra and $G$ is a
        finite group. Note that $\dim B$ can only be $4$ or $4p$ since Hopf algebras of dimension $2,2p, p$ are semisimple. If $\dim B=4$,  then $G$ is a cyclic group of order $p$ and it follows from Remark \ref{r:H4} that $B \cong H_4$.  Since $\k^G \cong \k[G]$ as Hopf algebras, $G(H^*)$ contains a subgroup of order $p$. It follows from Lemma \ref{l:p} that  $H^*$ is pointed but this is absurd. This forces $\dim B=4p$ and hence $H^*=B$. In particular, $H$ is pointed. By \cite[Lemma A.1]{AN1}, $H \cong \AA(\tau,1)$. It follows from \cite{Radf75} that every 2-dimensional simple $H$-module is not self-dual, a contradiction.
\end{proof}

\section{$p$-dimensional Hopf algebras in a Yetter-Drinfeld Category}\label{s:braidedHopf}
Let $p$ be an odd prime.
The Radford biproduct or bosonization  $R\times H_4$ of
a $p$-dimensional braided Hopf algebra $R$ in the Yetter-Drinfeld category $\YD{H_4}$ is a non-semisimple Hopf algebra of dimension $4p$. In addition, if $1_R$ is the unique group-like element of $R$, then  $G(R \times H_4)$ is of order 2, and hence, by Theorem \ref{t:Gorder},  $R \times H_4$  is not pointed. However, it remains unclear whether such a braided Hopf algebra exists.

In this section, we mainly study $p$-dimensional braided Hopf algebras in $\YD{H_4}$, and we show in Theorem \ref{t:ss} that  they must be semisimple as a $\k$-algebra. Moreover, the bosonization  $R \times H_4$ is pointed if, and only if, $R$ is commutative. If $R$ is not commutative, we show in Theorem \ref{t:1-chi} that $|G(R^*)|\equiv 1 \mod 4$. This theorem implies that all the 7 or 11 dimensional braided Hopf algebras in $\YD{H_4}$ are trivial Hopf algebras.

Recall that a Yetter-Drinfeld module over a Hopf algebra $B$ is a $\k$-vector space $V$ equipped with  a left $B$-module action $\triangleright$ and a left
$B$-comodule action $\rho_V : V \to B \o V$  which satisfy the compatibility condition:
$$
\rho_V(b\triangleright v) = b_1 v_{-1} S_B(b_3) \o b_2 \triangleright v_{0}
$$
for $b \in B$ and $v \in V$, where $\rho_V(v) = v_{-1}\o v_{0}$ is the Sweedler notation with the summation suppressed again. The Yetter-Drinfeld category $\YD{B}$, which consists of the  Yetter-Drinfeld modules over $B$ as objects, is a braided monoidal category. One can define a Hopf algebra in such category (cf. \cite{Mont93bk} or \cite{Andrus02}), and it is often called a \emph{braided Hopf algebra} in $\YD{B}$. The Radford biproduct or bosonization $R \times B$ constructed from a braided Hopf algebra $R$ in $\YD{B}$ is an ordinary Hopf algebra \cite{Radf85}.
If both $R$ and $B$ are finite-dimensional, then $R^*$ is naturally a braided Hopf algebra in $\YD{B^*}$ and we have an isomorphism
\begin{equation}\label{eq:dualbiprod}
  (R \times B)^* \cong R^* \times B^*
\end{equation}
as Hopf algebras.

The underlying space of $R \times B$ is $R \otimes B$.
For simplicity, we use the identifications $r = r \otimes 1_B$ and $b=1_R \otimes b$ in $R \times B$ for $r \in R$ and $b \in B$. Under this convention, the multiplication, the comultiplication, the counit and the antipode of $R \times B$ are given by
$$
(ra)(sb) := r(a_1 \triangleright s) a_2b , \quad \Delta(rb) := r^{(1)} (r^{(2)})_{-1}b_1 \o (r^{(2)})_{0} b_2,
$$
$$
\e(rb):=\e_R(r)\e_B(b), \quad \text{and}\quad S(rb)=(S_B(b_2)\triangleright S_R(r))S_B(b_1)
$$
for $r, s \in R$, $a, b \in B$ where $\Delta_R(r)=r^{(1)} \o r^{(2)}$ denotes the Sweedler notation for the comultiplication $\Delta_R$ of $R$.

We mainly  focus  on $p$-dimensional braided Hopf algebras in $\YD{H_4}$ in this section, and  they are always semisimple and cosemisimple.
\begin{thm}\label{t:ss}  For any $p$-dimensional braided Hopf algebra $R$ in $\YD{H_4}$,  $R$ and $R^*$ are semisimple algebras.
\end{thm}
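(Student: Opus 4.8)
The plan is to prove the single assertion that every $p$-dimensional braided Hopf algebra $R$ in $\YD{H_4}$ is semisimple as a $\k$-algebra; the statement for $R^*$ then follows formally and need not be argued separately. Indeed, $H_4$ is self-dual by Remark \ref{r:H4}, so \eqref{eq:dualbiprod} exhibits $R^*$ as a $p$-dimensional braided Hopf algebra in $\YD{H_4^*}\cong\YD{H_4}$. Hence applying the semisimplicity assertion to the object $R^*$ in place of $R$ shows that $R^*$ is a semisimple algebra as well (equivalently, that $R$ is cosemisimple). Thus everything reduces to the algebra semisimplicity of a single arbitrary such $R$.

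My strategy is a trace computation for the square of the braided antipode $S_R$, in which the oddness of $p=\dim R$ supplies the decisive non-vanishing. First I would record the $\BZ_2$-grading coming from the group-like $g\in H_4$: since $g$ is group-like it acts on $R$ by algebra automorphisms, and $g^2=1$, so $\gamma=(g\triangleright-)$ is an involutive algebra automorphism of $R$. Its $\pm1$-eigenspace decomposition $R=R_+\oplus R_-$ satisfies $1_R\in R_+$ and $R_iR_j\subseteq R_{i+j}$, because the multiplication of $R$ is a morphism in $\YD{H_4}$. As $\dim R=p$ is odd, the integer $\Tr(\gamma)=\dim R_+-\dim R_-$ is odd, hence nonzero; this is the one place the hypothesis on $p$ enters.

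Next I would analyze $S_R^2$, which is both a morphism in $\YD{H_4}$ and an ordinary algebra automorphism of $R$. Using the displayed antipode formula for the bosonization $A=R\times H_4$ together with $S^8=\id$ on $A$ (Lemma \ref{l:4}), $S_R$ has finite order, so $S_R^2$ is diagonalizable and commutes with $\gamma$. The heart of the matter is to evaluate $\Tr(S_R^2)$. Identifying $\YD{H_4}$ with modules over the Drinfeld double $D(H_4)$, the operator $S_R^2$ is implemented by the (inverse) Drinfeld element, so $\Tr(S_R^2)$ is the trace of a fixed finite-order operator assembled from the Yetter--Drinfeld action and coaction of $H_4$ on $R$. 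After simultaneously diagonalizing with $\gamma$, I expect this trace to reduce to $\pm(\dim R_+-\dim R_-)$, which is nonzero by the parity computation above. Finally, the braided analogue of the Larson--Radford theorem---over a field of characteristic zero, a finite-dimensional braided Hopf algebra with $\Tr(S_R^2)\neq0$ is both semisimple and cosemisimple---applied to $R$ delivers the theorem.

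The main obstacle is the explicit evaluation of $\Tr(S_R^2)$ inside $\YD{H_4}$. Unlike the case of a group-algebra base, an odd-dimensional $H_4$-comodule genuinely involves the skew-primitive $x$ in its coaction (it cannot be projective over $H_4$, but it need not be trivial on the nilpotent part), so $S_R^2$ is \emph{not} simply $\gamma$, and one must control the contribution of the nilpotent part of the Yetter--Drinfeld data to the trace. Carrying this out, together with pinning down the exact form of the braided Larson--Radford criterion invoked, is where the real work lies; the oddness of $p$ is precisely what forces the resulting trace to be a nonzero integer and thereby rules out a non-semisimple $R$.
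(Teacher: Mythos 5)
Your overall strategy---a trace argument for a square-of-the-antipode operator on $R$, with the oddness of $p$ forcing the trace to be nonzero---is the right one, and your reduction of the statement for $R^*$ to the statement for $R$ via \eqref{eq:dualbiprod} is fine. But there is a genuine gap exactly where you flag ``the main obstacle'': you never show that the relevant trace is nonzero. You only \emph{expect} it to reduce to $\pm(\dim R_+-\dim R_-)$ after diagonalization, with no argument that the nilpotent part of the Yetter--Drinfeld data (the $x$-action and the $x$-component of the coaction) contributes nothing; a priori the eigenvalues of the operator are roots of unity, so its trace is an algebraic integer that need not be a rational integer, and the claimed formula is not a formal consequence of the $\BZ_2$-grading. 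A second, related problem is which operator the semisimplicity criterion actually concerns: the criterion used here, \cite[Theorem 7.3]{AnSc98}, is about $\Tr(S^2|_R)$ where $S$ is the antipode of the bosonization $R\times H_4$ (under which $R$ is stable). This is \emph{not} the braided $S_R^2$---already for $H_4=\k[x]/(x^2)\times\k[\BZ_2]$ one has $S^2(x)=-x$ while $S_R^2(x)=x$---so even a successful evaluation of $\Tr(S_R^2)$ would require a further translation step that your outline does not supply.

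The paper closes the gap without any explicit evaluation. By Corollary \ref{c:Sorder}, $\ord(S^2|_R)$ is a power of $2$, and Lemma \ref{l:known} says that a finite-order automorphism whose order is a power of $2$ and whose trace vanishes can only exist on an even-dimensional space. Since $\dim R=p$ is odd, $\Tr(S^2|_R)\ne 0$ is forced, and \cite[Theorem 7.3]{AnSc98} then gives semisimplicity of $R$ and $R^*$ simultaneously. This contrapositive use of Lemma \ref{l:known} is the missing idea in your write-up; with it, the $\BZ_2$-grading, the Drinfeld element, and the explicit trace formula all become unnecessary.
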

\begin{proof}
Let $S$ be the antipode of  $R\times H_4$. Then $R$ is stable under $S^2$. By \cite[Theorem 7.3]{AnSc98}, it suffices to show that $\Tr(S^2|_R)\ne 0$. Suppose $\Tr(S^2|_R)= 0$. By Corollary \ref{c:Sorder}, $\ord(S^2|_R)$ is a positive power of 2. In view of Lemma \ref{l:known}, $\dim R$ is even which is a contradiction.
\end{proof}

For a Hopf algebra $H$ of dimension $4p$ such that $H$ and $H^*$ are not pointed, one would like to know when it can be isomorphic to a bosonization of the form $R \times H_4$. The following proposition provides a sufficient condition.
\begin{prop}\label{p:bosonization}
Let $H$ be a $4p$-dimensional Hopf algebra such that neither $H$ nor $H^*$ is pointed. Then $H \cong R \times H_4$ for some braided Hopf algebra $R$ in $\YD{H_4}$ if, and only if, both $H$ and $H^*$ admit a non-trivial skew primitive element.
\end{prop}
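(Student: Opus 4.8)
Here is how I would approach the proof, splitting it into the two implications.

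\smallskip

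\emph{The easy direction.} Suppose $H \cong R \times H_4$. The canonical inclusion $H_4 \hookrightarrow R \times H_4$ is a Hopf algebra embedding, so the image of the skew-primitive generator $x$ of $H_4$ is a skew-primitive element of $H$; it is non-trivial because $x \notin \k(g-1)$ inside $H_4$ and the embedding is an injective coalgebra map. For $H^*$ I would invoke \eqref{eq:dualbiprod} together with the self-duality of $H_4$ (Remark \ref{r:H4}) to write $H^* \cong R^* \times H_4^* \cong R^* \times H_4$, which is again a bosonization, and then repeat the argument to produce a non-trivial skew-primitive element of $H^*$.

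\smallskip

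\emph{Setting up a sub- and a quotient-$H_4$.} For the converse, first note that since neither $H$ nor $H^*$ is pointed, Theorem \ref{t:Gorder} (applied to $H$ and to $H^*$) gives $|G(H)| \le 2$ and $|G(H^*)| \le 2$, while the existence of a non-trivial skew primitive forces $|G(H)| \ge 2$ (in characteristic zero there are no non-zero primitive elements, so the two group-likes involved must be distinct); hence $|G(H)| = |G(H^*)| = 2$. Applying \cite[Proposition 1.8]{AN1} to the non-trivial skew primitive of $H$, together with the Nichols--Zoeller theorem, yields a non-semisimple pointed Hopf subalgebra of dimension $M^2 N \mid 4p$ with $M > 1$; exactly as in the proof of Lemma \ref{l:p} this forces $M = 2$, and since $H$ is not pointed we must have $N = 1$, so by Remark \ref{r:H4} the subalgebra is isomorphic to $H_4$. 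This gives a Hopf embedding $\iota : H_4 \hookrightarrow H$. Running the same argument for $H^*$ and dualizing the resulting embedding $H_4 \hookrightarrow H^*$ produces a Hopf surjection $\pi : H \twoheadrightarrow H_4$.

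\smallskip

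\emph{Reduction to a non-degeneracy statement.} By Radford's biproduct theorem \cite{Radf85} it suffices to produce a Hopf-algebra section of $\pi$, and for this it is enough to show that $\pi \circ \iota : H_4 \to H_4$ is an automorphism: then $\iota \circ (\pi\iota)^{-1}$ is the desired section and $H \cong H^{\mathrm{co}\,\pi} \times H_4$. Now $\pi\iota$ is a Hopf endomorphism of $H_4$, and a direct check shows every such endomorphism either is an automorphism (necessarily $g \mapsto g$, $x \mapsto \mu x$ for some $\mu \in \k^\times$) or annihilates $x$. Writing $\iota(H_4) = \k\langle g, x\rangle$ and letting $\gamma \in G(H^*)$, $\xi \in H^*$ be the group-like and skew-primitive generators of the sub-$H_4$ of $H^*$ whose dual inclusion is $\pi$, a short computation identifies $\pi\iota(g)$ and $\pi\iota(x)$ with the values of the evaluation pairing: $\pi\iota$ is an automorphism precisely when the restriction to $\iota(H_4)$ of the pairing $H \times H^* \to \k$ is non-degenerate, i.e. precisely when $\gamma(g) = -1$ and $\xi(x) \ne 0$.

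\smallskip

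\emph{The main obstacle.} The crux, and the step I expect to be hardest, is therefore to establish this non-degeneracy. For the skew-primitive block $\xi(x) \ne 0$ the natural tool is the duality between the coradical filtration of $H$ and the radical filtration of $H^*$: since $x \in H_1 \setminus H_0$ it pairs non-trivially with $\Rad(H^*) = H_0^{\perp}$ modulo $\Rad(H^*)^2$, and the non-trivial skew primitives of $H^*$ are exactly the elements realizing that pairing (this is the $\Ext^1$-interpretation underlying Lemma \ref{l:primitive} and \cite[Lemma 2.3]{EG03}); one then has to select the generators so that $\xi$ takes a non-zero value on $x$. For the group-like block $\gamma(g) = -1$ I would analyze the unique non-trivial character $\gamma$ of $H$ using Radford's formula \eqref{eq:radfordeq} and the fact that $S^2$ stabilizes $\iota(H_4)$ and acts there by $x \mapsto -x$, or else argue by contradiction that degeneracy of the pairing forces extra structure — placing the skew-primitive generator in a $p$-dimensional one-sided coideal subalgebra ${}^{\mathrm{co}\,\pi}H$, or producing a Hopf quotient of dimension at most $4$ — ultimately contradicting that both $H$ and $H^*$ are non-pointed. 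The delicate bookkeeping I would watch most carefully throughout is to ensure that the skew-primitive generators chosen to witness the pairing still generate genuine copies of $H_4$, so that $\iota$ and $\pi$ remain available for the final application of Radford's theorem.
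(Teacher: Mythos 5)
Your first direction and your construction of the embedding $\iota: H_4 \hookrightarrow H$ and the surjection $\pi: H \twoheadrightarrow H_4$ match the paper exactly, and your reduction of the converse to showing that $\pi\circ\iota$ is an automorphism of $H_4$ is a correct reformulation (your classification of Hopf endomorphisms of $H_4$ is right). However, the proof has a genuine gap precisely where you flag ``the main obstacle'': you never actually establish the non-degeneracy $\gamma(g)=-1$ and $\xi(x)\ne 0$. What you offer there is a list of candidate tools (the coradical/radical filtration pairing, Radford's formula for $S^4$, a contradiction argument) without carrying any of them out, and it is not at all clear that the first two can be made to work --- in particular nothing in your setup ties the \emph{particular} sub-$H_4$ of $H^*$ whose dual gives $\pi$ to the \emph{particular} sub-$H_4$ $\iota(H_4)$ of $H$, so there is no a priori reason the two blocks of the pairing you describe are simultaneously non-degenerate for the generators you have chosen. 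As written, the converse implication is not proved.

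The paper closes exactly this gap by a different and cleaner mechanism, which is worth comparing with your third bullet (``degeneracy forces extra structure''). Set $R=H^{\co\pi}$ and $K'=K^{\co\pi}=K\cap R$, where $K=\iota(H_4)$. Then $K'$ is a (Frobenius) coideal subalgebra with $\dim K'=\dim K/\dim\pi(K)\in\{1,2,4\}$, corresponding to the three possible images $\pi(K)=H_4$, $\k[g]$, $\k$; the degenerate cases you are worried about are exactly $\dim K'=2,4$. Since $R$ is a Hopf module in ${}^H_{K'}\mathcal{M}$, Masuoka's freeness theorem \cite{Mas92} gives that $R$ is free over $K'$, hence $\dim K'$ divides $\dim R=p$. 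As $p$ is odd, $\dim K'=1$, so $\pi|_K:K\to H_4$ is an isomorphism and $\iota\circ(\pi|_K)^{-1}$ is the Hopf section needed for Radford's biproduct theorem. The single ingredient your sketch is missing, and which converts your intended contradiction into an actual one, is this freeness/divisibility step; without it (or some substitute argument for the non-degeneracy of the pairing) the proof is incomplete.
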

\begin{proof} Since  $R \times H_4$ and $(R \times H_4)^*$ contain a Hopf subalgebra isomorphic to $H_4$, both of them contain a non-trivial skew primitive element. Conversely, we assume both $H$ and $H^*$ admit a non-trivial skew primitive element. It follows from Theorem \ref{t:Gorder} that $|G(H)|=|G(H^*)|=2$. By \cite[Proposition 1.8]{AN1}, there exists a non-semisimple Hopf subalgebra $K$ of $H$ such that $\dim K = mn^2$ with $n > 1$ and $|G(K)|=mn$. Since $|G(H)|=2$, $n=2$ and $m=1$. Therefore, $K$ is isomorphic to the Sweedler algebra $H_4$ (cf.  Remark \ref{r:H4}). By the same reason, $H^*$ also contains a Hopf subalgebra isomorphic to $H_4$. Since $H_4 \cong H_4^*$, there exists a Hopf algebra surjection $\pi: H \to H_4$.
It is well known that
$R:=H^{\co \pi}$ and  $K':=K^{\co \pi}$ are left coideal subalgebras of $H$. Moreover,
$K' \subset R$ and $R$ is a Hopf module in $_{K'}^H\mathcal{M}$. Note that $\pi(K)$ is a Hopf subalgebra of $H_4$, and $\dim K'=1,2$ or $4$. More precisely, $K' \cong \k, \k[X]/[X^2]$ or $H_4$ as $\k$-algebras,
 and they are Frobenius algebras. By \cite{Mas92}, $R$ is a free module over $K'$. This forces $\dim K'=1$ as $\dim R =p$. Thus, $\pi(K)=H_4$ or $\pi|_K: K \to H_4$ is a Hopf algebra isomorphism. It follows from \cite{Radf85} that $R$ is a braided Hopf algebra in $\YD{H_4}$, and $H \cong R \times H_4$ as Hopf algebras.
\end{proof}

To proceed with our discussion, we further establish some basic facts on semisimple braided Hopf algebras in a Yetter-Drinfeld category. The readers are referred to \cite{FMS97} for more details.

Let us assume that both $B$ and $R$ are finite-dimensional. By \cite[Remark 5.9]{FMS97}, $R$  is a
Frobenius algebra with the Frobenius map $\l_R$ given by a
right integral of $R^*$, i.e. $\l_R(r^{(1)}) r^{(2)} =\l_R(r) 1_R$ for all $r
\in R$. Moreover, $\l_R$ determines an element $\chi \in G(B^*)$ by the equation
\begin{equation}\label{eq:chi1}
\l_R(b\triangleright r) = \chi(b)\l_R(r) \quad \text{for all }b\in
B,\, r \in R\,.
\end{equation}
The algebra $R$ also admits a right integral $\Lam_R \in R$ such that
$\l_R(\Lam_R)=1$. This implies that
\begin{equation}\label{eq:chi2}
b\triangleright \Lam_R = \chi(b)\Lam_R \quad \text{for all }b\in B.
\end{equation}
If $R$ is semisimple, it has been proved in \cite[p4885]{FMS97} that $\e_R(\Lam_R)\ne 0$ and $\chi=\e_B$. This implies
\begin{equation}\label{eq:chi3}
 \l_R(b\triangleright r) = \e_B(b)\l_R(r) \quad\text{and}\quad b\triangleright \Lam_R = \e_B(b)\Lam_R
\end{equation}
for $b \in B$. We also need the following lemma for semisimple braided Hopf algebras.
\begin{lem} \label{l:chi1} Let $B$ be a finite-dimensional Hopf algebra over $\k$, and
   $R$ a braided Hopf algebra in the Yetter-Drinfeld category $\YD{B}$ such that $R$ is a semisimple $\k$-algebra. Suppose $\Lambda_R$ is a right integral of $R$ and $\l_R$ is a right integral of $R^*$.
   Then
  \begin{equation}\label{eq:one}
    \rho_R(\Lambda_R) = 1_B \o \Lambda_R,  \quad S_R(\Lam_R)=\Lam_R   \quad \text{and}\quad r_{-1}\l_R(r_{0})=1_B \l_R(r)
  \end{equation}
  for $r \in R$.
\end{lem}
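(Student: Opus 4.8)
The plan is to read the three identities in \eqref{eq:one} as equivariance statements about the canonically determined, essentially unique integrals, and to run them through the invariance relation \eqref{eq:chi3}. The decisive point is that \eqref{eq:chi3} is available not only for $R$ but also for $R^*$: by Theorem \ref{t:ss} the algebra $R^*$ is semisimple, and in view of \eqref{eq:dualbiprod} it is a braided Hopf algebra in $\YD{B^*}$, so the normalizations $\chi=\e$ underlying \eqref{eq:chi3} hold for $R^*$ as well. I would derive the first and third identities from the duality between $R$ and $R^*$, and the middle one from uniqueness of the integral together with $\e_R\circ S_R=\e_R$.

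First I would fix the transpose. Writing $\blacktriangleright$ for the $B^*$-action on $R^*$, the $\YD{B^*}$-structure of $R^*$ is dual to that of $R$: the action $\blacktriangleright$ is the transpose of the coaction $\rho_R$, so that $\langle f\blacktriangleright g,\,r\rangle=f(r_{-1})\,g(r_{0})$ for $f\in B^*$, $g\in R^*$, $r\in R$. Under the canonical Hopf isomorphism $R\cong R^{**}$ the right integral of $R$ is carried to the right integral of $(R^*)^*$; thus $\lambda_R$ is the right integral of $R^*$, while $\Lambda_R$ (via $r\mapsto\ev_r$) is the right integral of $(R^*)^*$. Now I would apply \eqref{eq:chi3} to the semisimple braided Hopf algebra $R^*$. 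The second relation in \eqref{eq:chi3}, applied to $R^*$, reads $f\blacktriangleright\lambda_R=f(1_B)\lambda_R$; evaluating at $r$ gives $f(r_{-1})\lambda_R(r_{0})=f(1_B)\lambda_R(r)$ for all $f\in B^*$, whence $r_{-1}\lambda_R(r_{0})=\lambda_R(r)1_B$, which is the third identity. The first relation in \eqref{eq:chi3}, applied to $R^*$, reads $\lambda_{R^*}(f\blacktriangleright g)=\e_{B^*}(f)\lambda_{R^*}(g)$; taking $\lambda_{R^*}=\ev_{\Lambda_R}$ this becomes $f((\Lambda_R)_{-1})\,g((\Lambda_R)_{0})=f(1_B)\,g(\Lambda_R)$ for all $f\in B^*$, $g\in R^*$, so $\rho_R(\Lambda_R)=1_B\o\Lambda_R$, the first identity.

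For the middle identity I would argue by uniqueness. Since $R$ is semisimple it is unimodular, so its space of (two-sided) integrals is one-dimensional; the antipode $S_R$ sends the right integral to a left integral, which by unimodularity is again a right integral, and because $\Lambda_R$ is coinvariant by the first identity the braiding is trivial on it, so this reduces to the classical computation. Hence $S_R(\Lambda_R)=c\,\Lambda_R$ for some scalar $c$. Applying $\e_R$ and using $\e_R\circ S_R=\e_R$ together with $\e_R(\Lambda_R)\neq0$ (recorded just before \eqref{eq:chi3}) forces $c=1$.

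The main obstacle is bookkeeping rather than ideas: I must pin down the precise $\YD{B^*}$-structure on $R^*$ coming from \eqref{eq:dualbiprod} — in particular that $\blacktriangleright$ is exactly the transpose of $\rho_R$ with no stray antipode twist, and that right integrals of $R$ and of $R^*$ correspond as claimed under $R\cong R^{**}$ — since an error in these conventions would interchange or distort the two identities. A secondary point requiring care, specific to the braided setting, is the assertion that $R$ is unimodular and that $S_R$ preserves the one-dimensional integral space; here the coinvariance from the first identity is what lets the classical arguments go through. As an alternative to the duality, the first identity can be obtained directly by showing that the line $\k\Lambda_R$ is a $B$-subcomodule of $R$ — using that $\rho_R$ is a comodule-algebra map together with the relation $\Lambda_R r=\e_R(r)\Lambda_R$ — so that $\rho_R(\Lambda_R)=g\o\Lambda_R$ for a grouplike $g\in G(B)$; applying $\id_B\o\e_R$ and using that $\e_R$ is $B$-colinear with $\e_R(\Lambda_R)\neq0$ then yields $g=1_B$.
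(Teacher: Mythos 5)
Your treatment of the middle identity is correct and is essentially the paper's own argument: the coinvariance $\rho_R(\Lam_R)=1_B\o\Lam_R$ makes the braided anti-multiplicativity $S_R(rs)=S_R(r_{-1}\triangleright s)S_R(r_0)$ degenerate to $S_R(\Lam_R s)=S_R(s)S_R(\Lam_R)$, unimodularity of the semisimple algebra $R$ gives $S_R(\Lam_R)=\gamma\Lam_R$, and $\e_R\circ S_R=\e_R$ with $\e_R(\Lam_R)\ne 0$ forces $\gamma=1$. The gap is in your main route to the first and third identities. You apply the normalization \eqref{eq:chi3} to $R^*$ regarded as a braided Hopf algebra in $\YD{B^*}$, and \eqref{eq:chi3} is available only when the braided Hopf algebra in question is a semisimple algebra; you justify the semisimplicity of $R^*$ by Theorem \ref{t:ss}. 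But Lemma \ref{l:chi1} is stated for an arbitrary finite-dimensional Hopf algebra $B$ and an arbitrary $R$ in $\YD{B}$ that is semisimple as a $\k$-algebra, whereas Theorem \ref{t:ss} concerns only $p$-dimensional braided Hopf algebras in $\YD{H_4}$, and its proof depends on the ambient $4p$-dimensional biproduct through Corollary \ref{c:Sorder}. There is no Larson--Radford theorem in the braided setting over a non-semisimple base: semisimplicity of $R$ does not automatically yield semisimplicity of $R^*$ (that implication is exactly what Theorem \ref{t:ss} must prove, by a trace argument, in its special case). Without it, \eqref{eq:chi1}--\eqref{eq:chi2} applied to $R^*$ only produce some group-like $\chi'\in G(B)$ with $r_{-1}\l_R(r_0)=\chi'\,\l_R(r)$, and likewise $\rho_R(\Lam_R)=z\o\Lam_R$ for some group-like $z$; your main route gives no means of showing $\chi'=z=1_B$.

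The paper closes this by citing Doi \cite{Doi00}, where a single group-like $z$ is shown to control both relations, $\rho_R(\Lam_R)=S_B(z)\o\Lam_R$ and $\l_R(r)z=S_B^{-1}(r_{-1})\l_R(r_0)$; applying the colinear map $\e_R$ and using $\e_R(\Lam_R)\ne 0$ (which \emph{does} follow from semisimplicity of $R$ alone, by \cite{FMS97}) gives $S_B(z)=1_B$ and hence the first and third identities simultaneously. The ``alternative'' you sketch at the end for the first identity is essentially this argument and is the right fix; note that once $\rho_R(\Lam_R)=1_B\o\Lam_R$ is established, the group-like $\chi'$ attached to $R^*$ by \eqref{eq:chi1} is computed directly from $\lambda_{R^*}=\ev_{\Lam_R}$ to be trivial, so \eqref{eq:chi2} for $R^*$ --- which requires no semisimplicity of $R^*$ --- then yields the third identity. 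As written, however, your proof establishes the first and third identities only under an extra hypothesis that is neither assumed in the lemma nor deducible from Theorem \ref{t:ss} in this generality.
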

\begin{proof}
From \cite[1.6]{Doi00}, there exists a group-like element $z \in B$ such that
$$
\rho_R(\Lambda_R)=S_B(z)\o \Lambda_R\quad \text{and} \quad \l_R(r) z = S_B\inv(r_{-1}) \l_R(r_0)
$$
for $r\in R$. Since $\e_R(\Lambda_R)\ne 0$ and $\e_R$ is a $B$-comodule map, i.e.
$\e_R(\Lam_R)1_B=S_B(z)\e_R(\Lam_R)$, we find $S_B(z)=1_B$. Hence,  the first and the last equalities follow immediately.

Note that $S_R$ is a bijection which satisfies $S_R(rs)=S_R(r_{-1}\triangleright s)S_R(r_0)$ for all $r,s \in R$. In particular, we have
$$
\e_R(s) S_R(\Lambda_R)= S_R(\Lambda_R s)=S_R(s)S_R(\Lambda_R) \,.
$$
Since $\e_R \circ S_R =\e_R$, $S_R(\Lam_R)$ is also a left integral. The semisimplicity of $R$ implies $S_R(\Lam_R)=\g \Lam_R$ for some $\g\in \k$.  Since $\e_R(\Lam_R)\ne 0$, we find $\g=1$.
\end{proof}

Now, we return to our specific case where $B$ is the Sweedler algebra $H_4$. The elements $x, g \in H_4$ defined in the end of Section \ref{s:1} will be used in the reminder of Section \ref{s:braidedHopf} and Section \ref{s:dim 5}.

Let us first consider $H_4$-module algebras which are semisimple as $\k$-algebras.
\begin{lem} \label{l:e1}
Let $A$ be a finite-dimensional left (resp. right) $H_4$-module algebra.
If $A$ is a semisimple algebra and $e$ is a central idempotent of $A$  such that $I=Ae$ is closed under the action of
$g$, then $I$ is a $H_4$-submodule of $A$, and
$$
 g \triangleright e =e\quad \text{and} \quad x\triangleright e
=0.
$$
In particular, $I$ is also a $H_4$-module algebra.
\end{lem}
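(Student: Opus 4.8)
The plan is to exploit the two different ways in which the generators $g$ and $x$ of $H_4$ act on a module algebra. Since $g$ is group-like, $g\triangleright(-)$ is an algebra automorphism of $A$, and $g^2=1$ makes it an involution; since $\Delta_{H_4}(x)=x\o g+1\o x$, the operator $x\triangleright(-)$ is a twisted derivation,
$$
x\triangleright(ab)=(x\triangleright a)(g\triangleright b)+a(x\triangleright b)\qquad(a,b\in A).
$$
I will carry out the left-module case; the right-module case is entirely analogous, since $\Delta_{H_4}(x)$ has the same two-term form.

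First I would prove $g\triangleright e=e$. Because $g\triangleright(-)$ is an algebra automorphism, $e':=g\triangleright e$ is again a central idempotent: idempotency is immediate from $(e')^2=g\triangleright(e^2)=e'$, and centrality follows on writing an arbitrary element of $A$ as $g\triangleright b$ (possible as $g\triangleright(-)$ is bijective) and using that $e$ is central. The hypothesis that $I=Ae$ is stable under $g$, together with the fact that $g\triangleright(-)$ is an involution, gives $g\triangleright I=I$; on the other hand $g\triangleright(Ae)=Ae'$ because $g\triangleright(-)$ is surjective on $A$. Hence $Ae'=Ae$, and since the central idempotent generating a two-sided ideal of $A$ is unique, I conclude $e'=e$, that is, $g\triangleright e=e$.

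Next I would obtain $x\triangleright e=0$ by applying the twisted derivation to $e=e\cdot e$. Setting $d:=x\triangleright e$ and using $g\triangleright e=e$, the derivation rule yields $d=de+ed$. As $e$ is central, $ed=de$, so $d=2de$; right-multiplying by $e$ and using $e^2=e$ gives $de=2de$, whence $de=0$ and therefore $d=2de=0$. This is the asserted identity $x\triangleright e=0$. The two identities now make the remaining claims formal: for $ae\in I$ we have $x\triangleright(ae)=(x\triangleright a)e+a(x\triangleright e)=(x\triangleright a)e\in Ae=I$, so $I$ is closed under $x$ as well as under $g$, and hence is an $H_4$-submodule; moreover $I=Ae$ is a unital subalgebra with unit $e$ on which the unit axioms $g\triangleright e=\e_{H_4}(g)e$ and $x\triangleright e=\e_{H_4}(x)e$ hold, so $I$ inherits the structure of an $H_4$-module algebra.

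The step requiring the most care is $g\triangleright e=e$: one must argue that the automorphism $g\triangleright(-)$ not only preserves the ideal $I$ set-theoretically but actually fixes its generating central idempotent, and this is exactly where the uniqueness of the central idempotent attached to a two-sided ideal enters. By contrast, the computation of $x\triangleright e$ and the closure statements are purely formal once $g\triangleright e=e$ is in hand. I note that, while the hypothesis of semisimplicity supplies the clean central-idempotent framework used throughout the surrounding application, the identities $g\triangleright e=e$ and $x\triangleright e=0$ themselves rely only on $e$ being a central idempotent.
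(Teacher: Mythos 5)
Your proof is correct, and its second half --- deriving $x\triangleright e=0$ by applying the twisted derivation rule to $e=e^{2}$ and then checking closure of $I=Ae$ under the $x$-action --- is essentially identical to the paper's computation. Where you genuinely diverge is in establishing $g\triangleright e=e$. The paper writes $e=e_1+\cdots+e_k$ as a sum of orthogonal primitive central idempotents and argues that the algebra automorphism $g\triangleright(-)$ permutes the primitive central idempotents lying in $I$, hence fixes their sum; this is the one place its proof invokes semisimplicity. You instead note that $e':=g\triangleright e$ is a central idempotent with $Ae'=g\triangleright(Ae)=I=Ae$, and that a two-sided ideal generated by a central idempotent determines that idempotent uniquely (it is the unit of the ideal, so $e=ee'=e'e=e'$). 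Your route is more elementary and, as you correctly observe, shows that the displayed identities need only that $e$ is a central idempotent whose ideal is $g$-stable --- semisimplicity enters the lemma only through the primitive-idempotent bookkeeping that the surrounding applications require. What the paper's version buys in exchange is that it sets up, in passing, the ``$g$ permutes the primitive central idempotents'' viewpoint that reappears in the proofs of Lemma \ref{l:e2} and Theorem \ref{t:1-chi}. Both arguments are complete and correct.
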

\begin{proof}
If $e$ is not a primitive idempotent, write $e=e_1+\cdots+e_k$ as a sum of orthogonal
 primitive central idempotents of $A$. Then
$e_ie=e_i=ee_i\in I$ for $1\leq i\leq k$. Note that $g$ acts on $A$ as an algebra automorphism.
Since $I$ is an ideal
closed under $g$-action, $g \triangleright e_i\in I$ is a primitive
central idempotent of $A$, and the action of $g$ permutes the idempotents
$e_1,\dots, e_k$. Thus, we have $g \triangleright e = e$. Note that
\begin{equation}\label{eq:xaction}
x \triangleright e=x\triangleright
e^2=(x\triangleright e)  (g \triangleright e) + e (x \triangleright
e)=2e(x \triangleright e) \in I.
\end{equation}
The equality also implies that $x \triangleright e = 2 x \triangleright e$. Since $\k$ is of characteristic zero, $x \triangleright e=0$. Now,
for any $r\in R$, we find
$x\triangleright (re)=(x \triangleright r)e  \in I$. Therefore, $I$ is closed under the  $H_4$-action.
\end{proof}

\begin{lem} \label{l:e2}
Let $R$ be a finite-dimensional braided Hopf algebra in $\YD{H_4}$. If  $R$ is a semisimple algebra and $I$ is a 1-dimensional ideal of $R$, then
$x\triangleright I =0$.
\end{lem}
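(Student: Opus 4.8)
The plan is to reduce the claim to a single identity about a central idempotent, and then to combine a quadratic relation coming from the module-algebra structure with a linear relation coming from the integral. Since $R$ is semisimple and $I$ is a two-sided ideal of dimension $1$, we can write $I=Re=\k e$ for a primitive central idempotent $e$ with $Re\cong\k$, so that $x\triangleright I=0$ is equivalent to the single equation $x\triangleright e=0$. Because $g$ acts on $R$ as an algebra automorphism with $g^2=1$, the element $e'=g\triangleright e$ is again a primitive central idempotent with $Re'\cong\k$. If $e'=e$, then $I$ is stable under the $g$-action and Lemma \ref{l:e1} applies verbatim to give $x\triangleright e=0$; so the substantive case is $e'\neq e$, where $ee'=e'e=0$.

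In that case I would exploit that $R$ is an $H_4$-module algebra together with $\Delta_{H_4}(x)=x\o g+1\o x$ and $x^2=0$. Writing $f:=x\triangleright e$ and expanding $f=x\triangleright e^2=(x\triangleright e)(g\triangleright e)+e(x\triangleright e)=fe'+ef$, the centrality of $e,e'$ shows $f\in\k e\oplus\k e'$, say $f=\a e+\b e'$. The key trick is to evaluate $x\triangleright(ef)$ in two ways. On one hand $ef=\a e$, so $x\triangleright(ef)=\a f$. On the other hand the module-algebra rule, together with $x\triangleright f=x\triangleright(x\triangleright e)=x^2\triangleright e=0$, gives $x\triangleright(ef)=f\,(g\triangleright f)$. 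Comparing the two expressions and using $ee'=0$ yields the quadratic relation
\[
\a^2=\a\b .
\]

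Finally I would bring in the integral to produce an independent linear relation. Since $R$ is semisimple, the right integral $\l_R$ of $R^*$ satisfies $\l_R(x\triangleright r)=\e_{H_4}(x)\l_R(r)=0$ by \eqref{eq:chi3}, whence $\l_R(f)=0$. Moreover the Frobenius non-degeneracy of $\l_R$ forces $\l_R(e)\neq 0$ (the block $Re=\k e$ is orthogonal to all other blocks, so it must pair nontrivially with itself), and $\l_R(e')=\l_R(g\triangleright e)=\e_{H_4}(g)\l_R(e)=\l_R(e)$. Hence $\a+\b=0$, and combined with $\a^2=\a\b$ in characteristic zero this gives $2\a^2=0$, so $\a=\b=0$ and $x\triangleright e=0$.

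The main obstacle I anticipate is precisely that the module and $g$-eigenspace structure alone is insufficient: those relations are consistent with a nonzero $f=\a(e-e')$, so the argument cannot be purely multiplicative. The decisive extra input is the comodule/integral data recorded in \eqref{eq:chi3}, and the nonobvious maneuver is selecting the auxiliary product $ef$, whose image under $x\triangleright$ can be computed in two ways to extract the quadratic relation that the linear relation from $\l_R$ then collapses.
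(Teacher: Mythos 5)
Your proof is correct, and its skeleton matches the paper's: reduce to the primitive central idempotent $e$ with $I=\k e$, dispose of the case $g\triangleright e=e$ via Lemma \ref{l:e1}, and in the case $e'=g\triangleright e\neq e$ first locate $f=x\triangleright e$ inside $\k e\oplus\k e'$ and then use the integral identities \eqref{eq:chi3} (with $b=g$ to get $\l_R(e')=\l_R(e)\neq 0$, with $b=x$ to get $\l_R(f)=0$). Where you genuinely diverge is in how the coefficients of $f=\a e+\b e'$ are constrained before the integral is brought in. The paper applies Lemma \ref{l:e1} to the $g$-stable ideal $R(e+e')$ to obtain $x\triangleright(e+e')=0$, and then uses the anticommutation $gx=-xg$ to force $f=\a(e+e')$, i.e.\ $\b=\a$; the single relation $\l_R(f)=0$ then gives $2\a\l_R(e)=0$ and finishes. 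You instead skip the $g$-symmetry bookkeeping and extract the quadratic relation $\a^2=\a\b$ by computing $x\triangleright(ef)$ in two ways using $x^2=0$, which you then collapse with the linear relation $\a+\b=0$ coming from the integral. Both arguments are sound and use the same external inputs (semisimplicity via Theorem \ref{t:ss} is not needed here since it is hypothesized, plus \eqref{eq:chi3} and the Frobenius property of $\l_R$); the paper's version determines $x\triangleright e$ completely from the module structure alone, while yours trades that for the relation $x^2=0$ and a second application of the derivation rule --- a slightly longer but equally valid route.
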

\begin{proof} Since $R$ is semisimple, there exists a primitive central idempotent $e_1$ of $R$
such that $I=Re_1$.
If $g \triangleright e_1 = e_1$, then $x \triangleright e_1=0$ by
Lemma \ref{l:e1}. Now, we assume $g\triangleright e_1=e_2 \ne e_1$. Then $e_2$ is also a primitive
central idempotent of $R$ and $e_1 e_2=0$.
The ideal
$\hat I=Re_1+Re_2=R(e_1+e_2)$ is closed under the action of $g$. By Lemma \ref{l:e1},
$x\triangleright
e_i=\alpha_{i1}e_1+\alpha_{i2}e_2$ for some $\alpha_{ij} \in \k$ and
$i,j\in\{1,2\}$, and $x\triangleright (e_1+e_2)=0$. These equations imply
$\alpha_{11}=-\alpha_{21}$ and $\alpha_{12}=-\alpha_{22}$. By the equation $gx\triangleright
e_1=-x\triangleright e_2$, we find $\alpha_{12}=-\alpha_{21}$ and
$\alpha_{11}=-\alpha_{22}$. Thus,  we have
$x\triangleright e_1=\alpha(e_1+e_2)$ and $x\triangleright
e_2=-\alpha(e_1+e_2)$, where $\alpha=\alpha_{11}$. Applying \eqref{eq:chi3} with $b=g$ and
$r=e_1$,  we obtain $\lambda_R(e_1)=\lambda_R(e_2)$. Since
$\ker\lambda_R$ cannot have any non-zero ideal of $R$, $\l_R(e_1)\ne 0$. Applying
\eqref{eq:chi3} again with $b=x$ and $r=e_1$, we find $2\alpha\lambda(e_1)=0$ which
implies $\alpha=0$. Therefore, $x\triangleright \hat I=0$.
\end{proof}

\begin{prop}\label{p:commu-braided}
Let $R$ be a $p$-dimensional Hopf algebra in $\YD{H_4}$. The following statements are equivalent:
\enumeri{
\item $R$ is commutative;
\item $R$ is cocommutative;
\item $R \times H_4$ is pointed;
\item $(R \times H_4)^*$ is pointed;
\item $R$ admits a $g$-invariant 1-dimensional ideal $I$ which is not spanned by any non-zero integral of $R$.
\item The action of $x$ on $R$ is trivial.
}
If one of the equivalent statements holds, then the $H_4$-action and the $H_4$-coaction of $R$ are trivial. In particular, $R$ is an ordinary Hopf algebra.
\end{prop}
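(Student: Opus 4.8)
The plan is to make (i) and (vi) the hub: I will prove $\text{(i)}\Rightarrow\text{(vi)}\Rightarrow\text{(i)}$ directly, extract from the second implication that the whole Yetter-Drinfeld structure degenerates and $R\cong\k[\BZ_p]$, and then deduce (ii)--(v) together with their converses from Theorem \ref{t:Gorder}, the identification \eqref{eq:dualbiprod}, and Lemmas \ref{l:e1}--\ref{l:chi1}. For $\text{(i)}\Rightarrow\text{(vi)}$ the argument is short: a commutative semisimple algebra of odd prime dimension $p$ is $\k^p$, so each of its minimal idempotents spans a $1$-dimensional ideal, and Lemma \ref{l:e2} gives $x\triangleright e=0$ for each such idempotent $e$; since these span $R$, the action of $x$ is trivial.

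The reverse implication $\text{(vi)}\Rightarrow\text{(i)}$ is the technical core and will also yield the final assertion. Assuming $x\triangleright R=0$, write the $H_4$-coaction as $\rho_R(r)=1\o r_{+}+g\o r_{-}+x\o u(r)+xg\o v(r)$. Evaluating the Yetter-Drinfeld compatibility on $b=x$ and comparing $H_4$-components forces $g\triangleright r_{+}=r_{+}$ and $g\triangleright r_{-}=-r_{-}$; with the counit axiom this identifies $r_{\pm}$ with the $\pm1$-eigencomponents of $r$ under $g$, so the group-like part of $\rho_R$ is exactly the grading $R=R_{+}\oplus R_{-}$ by the $g$-action. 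Coassociativity of $\rho_R$ together with the fact that $\rho_R$ is an algebra map then shows that $u\colon R_{+}\to R_{-}$ and $v\colon R_{-}\to R_{+}$ are grading-reversing twisted derivations with $u^2=v^2=uv=vu=0$, while the integral identities of Lemma \ref{l:chi1} give $\l_R|_{R_{-}}=0$ and $\l_R\circ u=\l_R\circ v=0$.

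The main obstacle is the vanishing $u=v=0$ of the skew-primitive part of the coaction. Since $R_{+}=R^{\langle g\rangle}$ is a separable subalgebra of the semisimple algebra $R$, the derivation $u$ is inner, implemented by some $\omega\in R_{-}$; I expect the coassociativity relations linking $u$ and $v$ to the product, read against the nondegeneracy of $\l_R$ (no nonzero ideal lies in $\ker\l_R$, as used in the proof of Lemma \ref{l:e2}), to force $\omega$ into the center and hence $u=v=0$. Once this holds the braiding of $R$ is the super-flip, so the $H_4$-structure factors through $\k[g]$ and $R\times\k[g]\subseteq R\times H_4$ is a Hopf subalgebra of dimension $2p$; by the classification of Hopf algebras of dimension $2p$ in \cite{Ng05} it is a group algebra or the dual of one, and since $p$ is odd the $\BZ_2$-grading coming from $g$ must be trivial, i.e. $R_{-}=0$. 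Thus $\rho_R$ is trivial, whence the same compatibility makes the $H_4$-action trivial, $R$ is an ordinary Hopf algebra of dimension $p$, and $R\cong\k[\BZ_p]$ by \cite{Zhu}. In particular $R$ is commutative, giving (i), and cocommutative, giving (ii), and the $H_4$-action and coaction are trivial, proving the closing statement.

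It remains to close the cycle through (ii)--(v), for which I will invoke a group-like dichotomy rather than repeat the coaction analysis. Both $R\times H_4$ and $(R\times H_4)^{*}\cong R^{*}\times H_4$ (by \eqref{eq:dualbiprod}) are non-semisimple of dimension $4p$, so Theorem \ref{t:Gorder} says each is pointed exactly when it has more than two group-like elements; since the group-likes contain $G(R)\times\{1,g\}$, respectively $G(R^{*})\times\{1,g\}$, this forces $|G(R)|,|G(R^{*})|\in\{1,p\}$. Hence (iii) holds iff $|G(R)|=p$ iff the group-likes span $R$ iff $R$ is a group algebra, i.e. cocommutative, giving $\text{(iii)}\Leftrightarrow\text{(ii)}$; dually $\text{(iv)}\Leftrightarrow|G(R^{*})|=p\Leftrightarrow R\cong\k^{\BZ_p}$, giving $\text{(iv)}\Leftrightarrow\text{(i)}$; and $\text{(ii)}\Rightarrow\text{(i)}$ follows by applying the already-proved core implication to $R^{*}$. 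Finally, a $g$-invariant $1$-dimensional ideal not spanned by $\Lam_R$ affords, via Lemma \ref{l:e1}, a character of $R$ distinct from $\e_R$ (the character carried by $\k\Lam_R$), i.e. a nontrivial group-like of $R^{*}$, so the dichotomy gives $|G(R^{*})|=p$ and $\text{(v)}\Rightarrow\text{(i)}$; conversely $\text{(i)}\Rightarrow\text{(v)}$ is immediate once $R\cong\k^{\BZ_p}$, since its $p-1$ non-integral idempotents provide such ideals. The step I expect to demand the most care is precisely the vanishing $u=v=0$: the claim that a trivial $x$-action propagates to make the entire coaction factor through the group-likes.
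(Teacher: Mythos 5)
Your architecture---making (i) and (vi) the hub and deducing (ii)--(v) from Theorem \ref{t:Gorder} and \eqref{eq:dualbiprod}---is reasonable, and your (i) $\Rightarrow$ (vi) via Lemma \ref{l:e2} is exactly the paper's argument. But the core implication (vi) $\Rightarrow$ (i) is not actually proved. Everything in your route hinges on the vanishing $u=v=0$ of the skew-primitive part of the coaction, and at precisely that point you only write that you ``expect'' the coassociativity relations, read against the nondegeneracy of $\l_R$, to force the implementing element $\omega$ into the center. That is a plan, not an argument: you have not specified the bimodule in which the twisted derivation $u$ lives, nor verified that innerness together with your relations kills it; and without $u=v=0$ the subspace $R\times\k[g]$ need not be a subcoalgebra of $R\times H_4$, so the reduction to the $2p$-dimensional classification collapses. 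The paper sidesteps this entire coaction analysis: if $x$ acts trivially on $R$, then $x$ is central in $H=R\times H_4$ (since $xs=(x\triangleright s)g+sx=sx$), so $Hx$ is a Hopf ideal of dimension $2p$; Lemma \ref{l:2p}(ii) then makes $H/Hx$ a cyclic group algebra, and $R$ embeds into it as $R+Hx$, hence is commutative. No information about the coaction is needed, and the triviality of the action and coaction is instead harvested from the implications (iii) $\Rightarrow$ (i),(ii) and (iv) $\Rightarrow$ (vi) via \cite[2.11]{Radf85}.

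A secondary problem is the ``group-like dichotomy'' you use to close the cycle. The group-like elements of $R\times H_4$ are not $G(R)\times\{1,g\}$: by \cite[2.11]{Radf85} they are the products $rb$ with $r\in G(R)$ satisfying $\rho_R(r)=1_{H_4}\o r$ and $b\in G(H_4)$. So the containment you invoke, and hence the asserted dichotomy $|G(R)|,|G(R^*)|\in\{1,p\}$, presupposes a trivial-coaction condition you have not checked (nor is it clear that $G(R)$ is a group in the braided setting, since the product of braided group-likes need not be group-like). The same point affects your (v) $\Rightarrow$ (i): the character $\chi$ afforded by $I$ yields a group-like of $(R\times H_4)^*$ only because Lemma \ref{l:e2} gives $x\triangleright I=0$ and hence $\rho_{R^*}(\chi)=\e_{H_4}\o\chi$, which is exactly how the paper argues (v) $\Rightarrow$ (iv). These defects are repairable, but the missing proof of $u=v=0$ is a genuine gap.
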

\begin{proof} Let $H$ be the biproduct $R \times H_4$. By \eqref{eq:dualbiprod}, $H^* \cong R^* \times H_4^*$.
Note that $R$ and $R^*$ are semisimple algebras by Theorem \ref{t:ss}.\\
(i) $\Rightarrow$ (iv) and (vi). Since $R$ is a commutative semisimple
algebra, $R$ is a direct sum of 1-dimensional ideals.
By Lemma~\ref{l:e2},
$x\triangleright R=0$.  Then $Hx$ is a Hopf ideal of $H$ and $\dim Hx = 2p$. Therefore, $H^*$ is pointed by Lemma \ref{l:2p}. \\
(ii) $\Rightarrow$ (iii). Since $R$ is cocommutative, $R^*$ is a commutative braided Hopf algebra in $\YD{H_4^*}$. Since $H_4^* \cong H_4$, the isomorphism in \eqref{eq:dualbiprod} and the preceding paragraph imply $R \times H_4$ is pointed.\\
(iii) $\Rightarrow$ (i) and (ii). If $H$ is pointed, then $G(H)$ is a cyclic group of order $2p$. Since $|G(H)|>2$, it follows from \cite[2.11]{Radf85} that there exists a non-trivial group-like element $r \in G(R)$ such that $\rho_R(r) = 1_{H_4} \o r$. Then $r$ is a group-like element of $H$, $\ord(r) = p$ and $R=\k[r]$. In particular, $R$ is an ordinary Hopf algebra and the $H_4$-coaction on $R$ is trivial.\\
(iv) $\Rightarrow$ (i), (ii), (v) and (vi). By \eqref{eq:dualbiprod}, $R^* \times H_4^*$ is pointed.
Using the same argument as in the preceding paragraph,  $R^*$ is isomorphic to the ordinary Hopf algebra $\k[\BZ_p]$, and the $H_4^*$-coaction on $R^*$ is trivial. Therefore, the $H_4$-action on $R$ is trivial. In particular, $g$ acts as identity on $R$, and $x$ acts trivially.\\
(v) $\Rightarrow$ (iv). Let $\chi$ be the character of $R$ associated with the $1$-dimensional ideal $I$. Then $\chi\in G(R^*)$ and $\chi\ne \e_R$. Since $I$ is $g$-invariant, $x \triangleright I=0$ by Lemma \ref{l:e2}. Thus, $\rho_{R^*}(\chi)= \e_{H_4} \o \chi$. In view of \cite[2.11]{Radf85}, $R^* \times H_4^*$ has at least 3 group-like elements. By Theorem \ref{t:Gorder}, $(R \times H_4)^*$ is pointed.\\
(vi) $\Rightarrow$ (i). Since $x$ acts on $R$ trivially, $I=H x$ is a
$2p$-dimensional Hopf ideal in $H$. By Lemma \ref{l:2p}, $H/I$ is isomorphic to a cyclic group algebra of dimension $2p$. Since $R$ is isomorphic to the subalgebra $R+ I$ of $H/I$, $R$ is commutative.
\end{proof}
We continue to study the possible non-commutative $p$-dimensional braided Hopf algebras $R$ in $\YD{H_4}$. Since $R$ is a semisimple $\k$-algebra, one can always decompose $R$ into a direct product of simple ideals. Since the group-like element $g \in G(H_4)$ acts as a $\k$-algebra automorphism on $R$, the action of $g$ permutes the simple ideals of $R$. In particular, $A_I=I + (g \triangleright I)$ is $g$-stable for any simple ideal $I$ of $R$. In view of Lemma \ref{l:e1}, $A_I$ is stable under the action $H_4$ and is a $H_4$-module algebra. The ideal $A_I$ is called a $H_4$-\emph{simple ideal} of $R$ in the sequel.

A semisimple braided Hopf algebra $R$ in $\YD{H_4}$ can be decomposed into a direct product of $H_4$-simple ideals. Namely,
\begin{equation}\label{eq:decomp1}
R = A_1 \oplus \cdots \oplus A_l
\end{equation}
where $A_i = I + (g \triangleright I)$ for some simple ideal $I$ of $R$. Therefore,
\begin{equation}\label{eq:decomp2}
R\times H_4 = A_1H_4 \oplus \cdots \oplus A_lH_4
\end{equation}
is an ideal decomposition of the biproduct $R \times H_4$.

Let $A$ be a $H_4$-simple ideal of the semisimple braided Hopf algebra $R$ in $\YD{H_4}$. Then $A$ does not admit any proper $H_4$-invariant ideal of $R$. Let $e_0 = (1_{H_4}+g)/2$ and $e_1= (1_{H_4}-g)/2$. Then $e_0, e_1$ are idempotents of $H_4$ and $\{e_0, e_1, xe_0, xe_1\}$ forms a $\k$-basis for $H_4$. Moreover, it forms an $R$-basis for $R \times H_4$.

Suppose
\begin{equation}\label{eq:E}
E=r_1 e_0 + r_2 e_1+ r_3 xe_0+r_4  xe_1
\end{equation}
is a central idempotent of $AH_4$ for some $r_1, r_2, r_3, r_4 \in A$. Since $gE = Eg$, we find
\begin{equation}\label{eq:g-action}
g\triangleright r_1=r_1,\quad g\triangleright r_2=r_2,\quad
g\triangleright r_3=-r_3,\quad g\triangleright r_4=-r_4.
\end{equation}
Similarly, by the equality $xE =Ex$, we find
\begin{equation}\label{eq:xa}
x \triangleright r_1 = x \triangleright r_2=0 \text{ and } x\triangleright r_3 = x\triangleright r_4 = r_1-r_2\,.
\end{equation}
Note that
$$
E e_0= r_1 e_0 +  r_3  xe_0, \quad  E e_1= r_2  e_1+ r_4  x e_1
$$
are idempotents.
This implies
\begin{equation}\label{eq:as}
r_1^2=r_1, \, r_2^2=r_2,\, r_3=r_1 r_3 + r_3 r_2,\,r_4=r_2 r_4 + r_4 r_1\,.
\end{equation}
For $r \in A$, the commutativity $rE=Er$ implies
\begin{equation}\label{eq:c1}
[r_1,  r] = -r_3 (x \triangleright r) , \quad [r_2, r] = r_4 (x \triangleright r) , \quad [r_3, r]=  [r_4, r]=0
\end{equation}
whenever $g\triangleright r=r$, and
\begin{equation}\label{eq:c2}
r r_1=r_2 r  + r_4(x \triangleright r), \quad r r_2=r_1 r -r_3(x \triangleright r),\quad r r_3 = r_4 r,\quad r r_4 = r_3 r
\end{equation}
whenever $g\triangleright r=-r$. In particular, we have
\begin{equation}\label{eq:commutativity}
[r_i, r_j]=0 \text{ for } i\in \{1,2\}, j\in \{3,4\}, \text{ and }
[r_3+r_4, r] = 0 \text{ for all } r \in A\,.
\end{equation}
In view of \eqref{eq:as},
\begin{equation}\label{eq:34}
r_3 = r_3 (r_1+r_2)=  (r_1+r_2) r_3 \text{ and } r_4 = r_4 (r_1+r_2)=  (r_1+r_2) r_4\,.
\end{equation}
\begin{lem}\label{l:ideals}
Let $I$ be a simple ideal of $R$ of dimension $n^2$.
\enumeri{
\item If $A_I$ is a simple ideal of $R$ or the action of $x$ on $A_I$ is trivial, then $A_IH_4$ is an indecomposable ideal of $R \times H_4$.
\item If $A_I=I \oplus (g\triangleright I)$, then
every simple quotient algebra of $A_I H_4$ has dimension  $4n^2$. Moreover,  $A_I  H_4$ is either a direct sum of two simple ideals of dimension $4n^2$ or a primary algebra, i.e $\frac{A_I  H_4}{Rad(A_I  H_4)}$ is a simple algebra of dimension $4n^2$.
\item If $A_I=I \oplus (g\triangleright I)$ and the action of $x$ on $A_I$ is trivial, then $A_IH_4$ is a primary algebra and $\frac{A_I H_4}{Rad(A_I H_4)}$ is a simple algebra of dimension $4n^2$.
}
In particular, $A_IH_4$ is either an indecomposable ideal or a direct sum of two simple ideals of dimension $4n^2$.
\end{lem}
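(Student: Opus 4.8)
The plan is to study the finite-dimensional algebra $A_IH_4$ through its central idempotents (for the statements about indecomposability and about being a sum of two simple ideals) and through its semisimple subalgebra together with a dimension count (for the statements about the dimensions of simple quotients). Throughout I set $C:=A_Ie_0\oplus A_Ie_1$, the subalgebra generated by $A_I$ and $g$; since $A_I$ is semisimple by Theorem~\ref{t:ss} and $C$ is the skew group algebra $A_I\rtimes\langle g\rangle$ of a semisimple algebra by the group $\langle g\rangle$ of order $2$ over a field of characteristic $0$, $C$ is semisimple. Using $e_0x=xe_1$ and $e_1x=xe_0$ one checks that $N:=A_Ixe_0\oplus A_Ixe_1$ equals $Cx$ and is a two-sided ideal with $A_IH_4=C\oplus N$ and $A_IH_4/N\cong C$. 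The decomposition of $A_I$ into simple matrix ideals permuted by $g$ drives the three cases.

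I would treat (iii) first. When $x$ acts trivially the relations $xr=rx$ (for $r\in A_I$) and $xg=-gx$ give $xcx=0$ for every $c\in C$, so $N=Cx$ is square-zero; hence $N=\Rad(A_IH_4)$ and $A_IH_4/\Rad(A_IH_4)\cong C$. Since $A_I=I\oplus(g\triangleright I)$ with $g$ interchanging the two matrix ideals, the idempotent $e=1_I$ satisfies $eCe\cong M_n(\k)$, and $e$ and $geg\inv=1_{g\triangleright I}$ are orthogonal conjugate idempotents summing to $1$; therefore $C\cong M_2(eCe)\cong M_{2n}(\k)$ is simple. Thus $A_IH_4$ is primary with $A_IH_4/\Rad(A_IH_4)$ simple of dimension $4n^2$.

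For (i) the goal is to show that $0$ and $1$ are the only central idempotents of $A_IH_4$, working from the constraints \eqref{eq:g-action}--\eqref{eq:34} on $E=r_1e_0+r_2e_1+r_3xe_0+r_4xe_1$. When $A_I=I$ is simple, $Z(A_I)=\k$, so the central element $r_3+r_4$ of \eqref{eq:commutativity}, lying in the $(-1)$-eigenspace of $g$, must vanish; then \eqref{eq:xa} forces $r_1=r_2$, and \eqref{eq:c1}--\eqref{eq:c2} give $r_3r=(g\triangleright r)r_3$ for all $r\in I$. Writing the action of $g$ on $I$ as $\operatorname{Ad}(u)$ (Skolem--Noether) yields $r_3\in\k u$, and comparison with $g\triangleright r_3=-r_3$ forces $r_3=0$, whence $r_4=0$ and $r_1=r_2\in Z(I)=\k$ is an idempotent, so $E\in\{0,1\}$. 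If instead $x$ acts trivially and $A_I=I$, this same computation applies; and if $x$ acts trivially with $A_I=I\oplus(g\triangleright I)$, indecomposability already follows from the primary conclusion of (iii).

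The heart of the lemma is (ii), and the main obstacle is part (a): every simple $A_IH_4$-module has dimension exactly $2n$ (equivalently every simple quotient algebra has dimension $4n^2$). As in (iii), $C\cong M_{2n}(\k)$ is simple with unique simple module $T$ of dimension $2n$. The relations $xr=rx+(x\triangleright r)g$ and $xg=-gx$ exhibit $x$ as a twisted derivation over $C$: there is an algebra automorphism $\sigma$ of $C$ (the identity on $A_I$, sending $g$ to $-g$) and a $\sigma$-derivation $d$ with $xc=\sigma(c)x+d(c)$. Because $C$ is central simple, $\sigma=\operatorname{Ad}(w)$ with $w^2\in\k^{\times}$, normalized to $w^2=1$, and then $c\mapsto w\inv d(c)$ is an ordinary derivation of $C$, hence inner; correcting $x$ by the corresponding element of $C$ produces $\hat x:=w\inv x-t$ that commutes with $C$ and with $x$, so $\hat x$ is central and $A_IH_4=C\oplus C\hat x=C[\hat x]$. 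On a simple module $V$ the central $\hat x$ acts as a scalar, so the image of $A_IH_4$ in $\End_\k(V)$ coincides with the image of $C\cong M_{2n}(\k)$; simplicity of $V$ then forces $V\cong T$ and $\dim V=2n$. Part (b) follows by counting: with every simple quotient of dimension $4n^2$ and $\dim A_IH_4=8n^2$, there are at most two isomorphism classes of simple modules, so $A_IH_4/\Rad(A_IH_4)$ is either $M_{2n}(\k)$, giving a primary algebra, or $M_{2n}(\k)\times M_{2n}(\k)$, in which case $\Rad(A_IH_4)=0$ and $A_IH_4$ is a direct sum of two simple ideals of dimension $4n^2$. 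The final ``in particular'' then combines (i) with this dichotomy.
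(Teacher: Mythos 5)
Your proposal is correct, but for the crucial part (ii) it takes a genuinely different route from the paper. The paper shows every simple $A_IH_4$-module has dimension $2n$ by restricting to $A_I$ and using the $g$-symmetry to prove the two simple $A_I$-constituents occur with a common multiplicity $m$, which is then forced to be $1$ by comparing $4m^2n^2$ with $\dim A_IH_4=8n^2$; you instead prove the stronger structural fact that $A_IH_4\cong C\o Z$ with $C=A_I\rtimes\langle g\rangle\cong M_{2n}(\k)$ and $Z$ a $2$-dimensional commutative centralizer, by straightening the twisted derivation $c\mapsto xc-\sigma(c)x$ using Skolem--Noether and the innerness of derivations of $M_{2n}(\k)$. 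Your version is less elementary but buys more: it identifies $A_IH_4$ as $M_{2n}(\k\times\k)$ or $M_{2n}(\k[t]/(t^2))$ outright, from which both halves of (ii), and (iii) as well, are immediate. For (i) the paper stays entirely inside the idempotent relations \eqref{eq:g-action}--\eqref{eq:34}: once $r_1=r_2$, equations \eqref{eq:as} and \eqref{eq:commutativity} give $r_i=2r_1r_i$ and hence $r_3=r_4=0$ directly, so your Skolem--Noether detour for the simple case, while valid, is avoidable; for (iii) the paper merely notes that $A_IH_4x$ is nilpotent and quotes (ii), whereas your computation of $C$ is self-contained. One caveat: your blanket assertion that $N=Cx$ is a two-sided ideal with $A_IH_4/N\cong C$ is false in general --- for $c\in A_I$ one has $xc=cx+(x\triangleright c)g$, and the term $(x\triangleright c)g$ lies in $C$ rather than in $Cx$, so $N$ is only a left ideal when the $x$-action is non-trivial. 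Since you invoke this claim only in case (iii), where $x$ acts trivially and the offending term vanishes, the slip is harmless, but the statement should be restricted to that case.
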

\begin{proof} (i) Suppose $E$, given by \eqref{eq:E}, is a non-zero central idempotent of $A_IH_4$.
If $A_I$ is simple, then \eqref{eq:commutativity} implies that $r_3+r_4=\g 1_{A_I}$ for some scalar $\g \in \k$. By Lemma \ref{l:e1}, $x \triangleright (r_3 + r_4) =0$. Hence, by \eqref{eq:xa}, $r_1=r_2$. If the action of $x$ on $A_I$ is trivial, then we also have $r_1=r_2$ by \eqref{eq:xa}.
    It follows from \eqref{eq:as} that $r_1$ is a non-zero idempotent, otherwise $E=0$. Thus,  $r_i  = 2r_1 r_i$ for $i=3,4$. This implies that $r_3=r_4=0$ and $E=r_1$. In particular,  $r_1$ is a non-zero central idempotent of $A_I$. Since  $A_I$ is $H_4$-simple and $r_1$ is $g$-invariant by \eqref{eq:g-action},  $r_1$ must be the identity of $A_I$. Thus, there is no non-trivial central idempotent in  $A_I \times H_4$.\\\\
(ii) For the first statement, it is equivalent to show that every simple $A_IH_4$-module $V$ has dimension $2n$. By restriction, $V$ is an $A_I$-module and so $V\cong m_1 V_1 +m_2 V_2$ as $A_I$-modules where $V_1, V_2$ are the simple modules of $I$ and $g \triangleright I$ respectively. We claim that $m_1=m_2=1$. Let $z_1, \dots, z_{m_1}\in V$ such that
    $Iz_i$ is a simple $I$-module and $IV=Iz_1 \oplus\cdots \oplus Iz_{m_1}$. Then
    $gIV=gIz_1 \oplus\cdots \oplus gIz_{m_1}$ is a decomposition of vector subspaces and
    $gIz_i = (g \triangleright I) gz_i$ is $g \triangleright I$-module. Therefore, $m_1 \le m_2$. Similarly, one can show that $m_2 \le m_1$, and hence $m_1 =m_2 = m$. Therefore, $\dim V= 2mn$, and so $\frac{A_I H_4}{Rad (A_I  H_4)}$ contains a simple ideal of dimension $4m^2n^2$. This forces $m=1$ and $\dim V = 2n$.

    If $A_I  H_4$ is not a primary algebra, then there is more than one simple module over $A_I  H_4$. Thus, we have the inequality
    $$
    8n^2 =\dim A_IH_4 \ge \dim \left(\frac{A_I  H_4}{Rad (A_I  H_4)}\right) \ge 8n^2.
    $$
    Therefore, $Rad (A_I  H_4)=0$ and $A_I  H_4$ is a direct sum of two simple ideals of dimension $4n^2$.\\\\
    (iii) If the action of $x$ on $A_I$ is trivial, then $A_I H_4x$ is a nilpotent ideal of $A_I H_4$. In particular, $A_I H_4$ is not semisimple. In view of (ii), $A_I  H_4$ is a primary algebra.
 \end{proof}
\begin{thm} \label{t:1-chi}
Let $p$ be an odd prime and $R$  a non-commutative $p$-dimensional braided Hopf algebra in $\YD{H_4}$. Then $|G(R^*)|$ and $|G(R)|$ are integers congruent to 1 modulo 4.
\end{thm}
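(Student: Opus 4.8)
The plan is to convert the statement into a count of one-dimensional ideals of the semisimple algebra $R$ and then read off the residue modulo $4$ from the non-self-duality of $2$-dimensional simple modules over the bosonization $H:=R\times H_4$. First I would handle both assertions simultaneously: by Proposition \ref{p:commu-braided} the algebra $R$ is commutative if and only if it is cocommutative, so a non-commutative $R$ is non-cocommutative and hence $R^*$ is again a non-commutative $p$-dimensional braided Hopf algebra in $\YD{H_4}$ (after identifying $H_4^*\cong H_4$). Thus it suffices to prove $|G(R^*)|\equiv 1\pmod 4$ and then apply this to $R^*$ to get the statement for $|G(R)|=|G(R^{**})|$. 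Next I would identify $|G(R^*)|$ with the number $c$ of one-dimensional ideals of $R$: grouplikes of the coalgebra $R^*$ correspond to one-dimensional $R^*$-comodules, hence to one-dimensional $R$-modules, hence to the $1\times 1$ blocks in the Wedderburn decomposition of $R$ (which is semisimple by Theorem \ref{t:ss}). Using the $H_4$-simple decomposition \eqref{eq:decomp1}, the action of $g$ permutes these $1$-blocks, splitting them into $a$ fixed ones and $b$ transposed pairs, so that $c=a+2b$.

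The heart of the argument is to compute $a$ and the parity of $b$ via $H$ and Lemma \ref{l:ideals}. For $a$, I would count the one-dimensional simple $H$-modules block by block along the ideal decomposition \eqref{eq:decomp2}. A $g$-fixed $1$-block gives $A_IH_4\cong H_4$, which has exactly two one-dimensional modules; a transposed pair of $1$-blocks gives, by Lemma \ref{l:ideals}(iii), a primary algebra whose unique simple module is $2$-dimensional; and any $A_IH_4$ coming from a block $M_n$ with $n\ge 2$ has all simple modules of dimension $\ge n\ge 2$ by Lemma \ref{l:ideals}(i),(ii). Hence the one-dimensional simple $H$-modules number exactly $2a$. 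On the other hand $H^*$ is not pointed (Proposition \ref{p:commu-braided}), so $|G(H^*)|=2$ by Theorem \ref{t:Gorder}; since the one-dimensional simple $H$-modules are counted by $|G(H^*)|$, equating the two counts forces $a=1$. In particular the only $g$-fixed $1$-block is the one spanned by the integral $\Lam_R$ (affording $\e_R$), and $c=1+2b$.

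Finally I would show $b$ is even, which is where Lemma \ref{l:dual} enters. Each transposed pair of $1$-blocks yields one $2$-dimensional simple $H$-module $V$, and distinct pairs yield non-isomorphic $V$, so these account for exactly $b$ of the $2$-dimensional simple $H$-modules. Restricting to $R$, such a $V$ satisfies $V|_R\cong \k_\chi\oplus\k_{\chi\circ g}$ for a non-trivial character $\chi$ with $\chi\circ g\ne\chi$, whence $V\du|_R\cong \k_{\chi\circ S_R}\oplus\k_{\chi\circ g\circ S_R}$. Since $a=1$, the non-trivial character $\chi\circ S_R$ is not $g$-fixed, and its $g$-orbit again has size $2$, so $V\du$ arises from a transposed pair of $1$-blocks as well. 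Thus left duality preserves this family of $b$ modules, and by Lemma \ref{l:dual} it acts on them without fixed points, forcing $b$ to be even. Consequently $|G(R^*)|=c=1+2b\equiv 1\pmod 4$, and applying this to $R^*$ gives $|G(R)|\equiv 1\pmod 4$. The hard part will be the block-by-block bookkeeping that simultaneously pins down $a=1$ and shows that duality returns the $b$ transposed-pair simples to their own family; both rely on Lemma \ref{l:ideals} to control the dimensions of the simple $A_IH_4$-modules, and the second crucially reuses $a=1$ to guarantee that $\chi\circ S_R$ is not $g$-fixed.
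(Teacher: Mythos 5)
Your overall strategy coincides with the paper's: both reduce to counting the set $\OO$ of one-dimensional ideals of the semisimple algebra $R$, show that $g$ fixes exactly one of them (the span of $\Lam_R$), and then use Lemma \ref{l:dual} on the $2$-dimensional simple modules of $R\times H_4$ attached to the transposed pairs. Your route to $a=1$ (counting one-dimensional simple $H$-modules block by block and equating with $|G(H^*)|=2$) differs from the paper, which instead observes directly that a second $g$-fixed one-dimensional ideal would make $R$ commutative by Proposition \ref{p:commu-braided}(v); both arguments are sound, though yours needs the small extra remark that a one-dimensional $H$-module restricts to a character of $R$ and therefore can only live over a block $A_IH_4$ with $\dim I=1$ (Lemma \ref{l:ideals}(i) by itself does not bound the dimensions of the simples over a $g$-fixed matrix block).

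The genuine gap is in your last step: ``by Lemma \ref{l:dual} it acts on them without fixed points, forcing $b$ to be even.'' The map $V\mapsto V\du$ on isomorphism classes is \emph{not} an involution in general, since $V\bidu\cong {}_{S^2}V$ need not be isomorphic to $V$; a fixed-point-free permutation of a finite set does not force even cardinality (a $3$-cycle is fixed-point-free). To close this you must control the order of the permutation: by Lemma \ref{l:4} one has $S^8=\id$, hence $V^{\vee 8}\cong {}_{S^8}V\cong V$ and every orbit of the duality permutation on your $b$-element family has length a power of $2$; fixed-point-freeness (Lemma \ref{l:dual}) then rules out orbits of length $1$, so all orbits have even length and $b$ is even. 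This is precisely the ingredient the paper uses, phrased contrapositively: if $b$ were odd, the antipode --- whose order is a power of $2$ --- would have to stabilize one of the $8$-dimensional primary ideals $A_IH_4$, producing a self-dual $2$-dimensional simple module and contradicting Lemma \ref{l:dual}. With that one line added, your argument is complete.
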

\begin{proof} By Proposition \ref{p:commu-braided}, $R^*$ is also non-commutative. It suffices to show that $|G(R^*)| \equiv 1 \mod 4$.
  Since $g$ acts on $R$ as an algebra automorphism, $g$ permutes the simple ideals of $R$. In particular, $g$ permutes the set $\OO$ of $1$-dimensional ideals of $R$ and $g$ fixes the ideal spanned by a non-zero integral $\Lam_R$ of $R$. If $g$ fixes another 1-dimensional ideal which is not spanned by $\Lam_R$, then  it follows from Proposition \ref{p:commu-braided} that $R$ is commutative. Therefore, $g$  fixes exactly one element of $\OO$ and hence $|\OO|$ is an odd number.

  Now, we assume that $|\OO|\equiv 3 \mod 4$. Let $I$ be a 1-dimensional ideal of $R$ such that $\Lam_R \not\in I$ . Then $A =I \oplus (g \triangleright I)$ is a 2-dimensional $H_4$-simple ideal. In view of Lemma \ref{l:e2}, the action of $x$ on $A$ is trivial. It follows from Lemma \ref{l:ideals} (iii) that $AH_4$ is an indecomposable ideal of $R \times H_4$ and $\dim AH_4=8$. By Lemma \ref{l:ideals}, for any simple ideal $J$ of $R$ with $\dim J > 1$, each indecomposable ideal summand of $(J+(g \triangleright J)) H_4$ has dimension at least  16. Therefore,  $\widehat{\OO}=\{A_I H_4\mid I \in \OO,\, I\ne \k \Lam_R\}$ are all the 8-dimensional indecomposable ideal summands of $R \times H_4$. Moreover, each of these indecomposable summands is primary and its simple quotient is 4-dimensional.  The antipode $S$ of $R \times H_4$ permutes the ideals in $\widehat\OO$. Since $|\widehat\OO|$ is an odd number and  $\ord(S)$ is a power of $2$ by Lemma \ref{l:4}, one of these $8$-dimensional ideals is stable under $S$. Therefore, the associated 2-dimensional simple $(R \times H_4)$-module is self-dual. However, this contradicts Lemma \ref{l:dual}. Therefore $|\OO|\equiv 1 \mod 4$.

    Since $|G(R^*)|=|\OO|$, the result follows.
\end{proof}
\begin{cor}\label{c:347}
  Braided Hopf algebras of dimension 3, 7 or 11 in $\YD{H_4}$ are trivial Yetter-Drinfeld modules, and they are group algebras.
\end{cor}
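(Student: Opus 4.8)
The plan is to combine the semisimplicity of $R$ with the arithmetic obstruction of Theorem \ref{t:1-chi}. First I would invoke Theorem \ref{t:ss} to conclude that $R$ is a semisimple $\k$-algebra; since $\k$ is algebraically closed, the Artin--Wedderburn theorem gives a decomposition
$$
R \cong \bigoplus_{i=1}^{l} M_{n_i}(\k), \qquad \sum_{i=1}^{l} n_i^2 = \dim R = p .
$$
Under this decomposition the set $\OO$ of $1$-dimensional ideals of $R$ consists exactly of the matrix blocks with $n_i = 1$, so that $|G(R^*)| = |\OO| = \#\{\, i : n_i = 1 \,\}$, and $R$ is commutative precisely when every $n_i = 1$.

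Next I would rule out non-commutativity by a short case analysis on $p \in \{3,7,11\}$. Assume $R$ is non-commutative, so at least one $n_i \ge 2$, and its block contributes at least $4$ to the sum $\sum_i n_i^2 = p$. For $p=3$ this is already impossible, so $R$ is commutative outright. For $p=7$ the only partition of $7$ into squares with a part $\ge 4$ is $7 = 4 + 1 + 1 + 1$, yielding $|\OO| = 3$. For $p = 11$ the possibilities are $11 = 4 + 1^{\times 7}$, $\ 11 = 4 + 4 + 1 + 1 + 1$, and $11 = 9 + 1 + 1$, yielding $|\OO| = 7, 3, 2$ respectively (note $9 + 4 = 13 > 11$, so no further options arise). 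In every case $|G(R^*)| = |\OO| \not\equiv 1 \mod 4$, which contradicts Theorem \ref{t:1-chi}. Hence $R$ must be commutative.

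Finally, commutativity feeds directly into Proposition \ref{p:commu-braided}: statement (i) holds, so the $H_4$-action and the $H_4$-coaction on $R$ are both trivial and $R$ is an ordinary Hopf algebra; that is, $R$ is a trivial Yetter--Drinfeld module, which is the first assertion of the corollary. Being an ordinary Hopf algebra of prime dimension $p$ over $\k$, $R$ is isomorphic to the group algebra $\k[\BZ_p]$ by \cite{Zhu}, which gives the second assertion.

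The genuine content of the argument is the congruence obstruction of Theorem \ref{t:1-chi}; once that is available, what remains is a finite arithmetic check followed by citations of Proposition \ref{p:commu-braided} and \cite{Zhu}, so I do not expect a real obstacle. The one point to watch is the bookkeeping in the first paragraph: I must record that the number of $1$-dimensional Wedderburn blocks equals $|\OO| = |G(R^*)|$ (the dual grouplikes), matching exactly the invariant to which the congruence of Theorem \ref{t:1-chi} applies.
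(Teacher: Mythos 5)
Your proof is correct and follows essentially the same route as the paper: semisimplicity from Theorem \ref{t:ss}, a Wedderburn partition-of-$p$-into-squares check showing the number of $1$-dimensional ideals is $3 \bmod 4$ or equal to $2$, contradiction with Theorem \ref{t:1-chi}, and then Proposition \ref{p:commu-braided} to conclude. Your explicit citation of \cite{Zhu} for the group-algebra claim is a slightly more careful finish than the paper's, which leans on the proof of Proposition \ref{p:commu-braided} (where $R=\k[r]$ is exhibited directly), but the argument is the same.
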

\begin{proof} Note that these braided Hopf algebras $R$ are semisimple by Theorem \ref{t:ss}. Thus,  $R$ is obviously commutative when $\dim R=3$. Suppose there exists a braided Hopf algebra $R$ in  $\YD{H_4}$ of dimension 7 or 11 which is not a commutative algebra. The number of 1-dimensional characters of $R$ is either congruent to 3 modulo 4 or equal to 2, but this contradicts Theorem \ref{t:1-chi}. Therefore, $R$ must be commutative, and  the result follows immediately from Proposition \ref{p:commu-braided}.
\end{proof}

\section{Braided Hopf algebras of dimension 5 in $\YD{H_4}$}\label{s:dim 5}
  Theorem \ref{t:1-chi} cannot be applied to decide the commutativity of a braided Hopf algebra of dimension congruent to 1 modulo 4. In this section, we use the result of Chen and Zhang  \cite{CZ} on the classification of 4-dimensional matrix algebra in $\CM{D(H_4)}$ to deal with the 5-dimensional case, and  we prove
\begin{lem}\label{l:b5}
Braided Hopf algebras of dimension 5 in $\YD{H_4}$ are commutative.
\end{lem}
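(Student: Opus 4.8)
The plan is to argue by contradiction: assume $R$ is a non-commutative $5$-dimensional braided Hopf algebra in $\YD{H_4}$ and pin down its algebra structure. By Theorem \ref{t:ss}, $R$ is semisimple, so its Wedderburn decomposition is essentially forced: the only $5$-dimensional semisimple non-commutative $\k$-algebra is $R\cong \k\oplus M_2(\k)$, where the $\k$-summand is the unique $1$-dimensional ideal, necessarily spanned by the integral $\Lam_R$ (cf.\ the proof of Theorem \ref{t:1-chi}, where $g$ fixes exactly one $1$-dimensional ideal), and $I\cong M_2(\k)$ is the unique $4$-dimensional simple ideal. Note that $|\OO|=1\equiv 1\bmod 4$, so Theorem \ref{t:1-chi} does not exclude this case; a finer argument is needed. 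Since $R$ is non-commutative, Proposition \ref{p:commu-braided}(vi) shows that $x$ acts non-trivially on $R$, and as $x\triangleright\Lam_R=0$ by \eqref{eq:chi3}, the action of $x$ on $I$ must be non-trivial.

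First I would identify $I$ as a module algebra over the Drinfeld double. Being the unique $4$-dimensional ideal, $I$ is $g$-stable, so by Lemma \ref{l:e1} it is an $H_4$-module algebra; together with the induced coaction it becomes an object of $\YD{H_4}$, equivalently a $D(H_4)$-module algebra, isomorphic to $M_2(\k)$ as an algebra. At this point I would invoke the classification of Chen and Zhang \cite{CZ} of $4$-dimensional matrix algebras in $\CM{D(H_4)}$ to list the finitely many possible $D(H_4)$-module algebra structures on $I$. The constraints available from the braided Hopf structure (the integral relations of Lemma \ref{l:chi1} and \eqref{eq:chi3}, non-triviality of the $x$-action, and the fact that $g$ acts as a non-scalar inner automorphism by Skolem--Noether) should cut the list down to a short one, and determine the actions of $x$ and $g$ on $I$ explicitly.

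The decisive step is to examine the biproduct $H=R\times H_4$, a non-semisimple $20$-dimensional Hopf algebra with $|G(H)|=|G(H^*)|=2$ by Theorem \ref{t:Gorder}. By \eqref{eq:decomp2} its algebra structure splits as $H=(\k\Lam_R)H_4\oplus IH_4$, where $(\k\Lam_R)H_4\cong H_4$ is $4$-dimensional and $IH_4\cong M_2(\k)\,\#\,H_4$ is $16$-dimensional; by Lemma \ref{l:ideals}(i) the latter is an indecomposable ideal. Using the explicit Chen--Zhang data I would determine $IH_4$ as an algebra --- expecting $IH_4\cong M_2(\k)\o H_4$ with exactly two $2$-dimensional simple modules $W_0,W_1$ --- and then run the self-duality argument of Theorem \ref{t:1-chi}: the antipode $S$ permutes the two algebra ideals of $H$, and since they have different dimensions it fixes $IH_4$. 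Consequently $V\mapsto V\du$ preserves the set $\{W_0,W_1\}$. I would then use that the distinguished group-likes satisfy $a^2=1$ and $\alpha^2=\e$ by Lemma \ref{l:4} (so that Radford's formula \eqref{eq:radfordeq} makes $S^4$ nearly central), together with the Chen--Zhang data, to force at least one $W_i$ to be self-dual. A self-dual $2$-dimensional simple $H$-module contradicts Lemma \ref{l:dual}, and this contradiction forces $R$ to be commutative.

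The main obstacle is precisely this last step: ruling out the possibility that duality merely swaps $W_0$ and $W_1$ rather than fixing one of them, equivalently forcing a self-dual $2$-dimensional simple. This is exactly where the explicit structure furnished by Chen and Zhang is indispensable, since the block-and-antipode formalism alone leaves both the self-dual and the swapped configurations consistent; the genuine content is the case-by-case verification that the surviving $D(H_4)$-module algebra structures on $M_2(\k)$ produce the self-dual configuration. Should the self-duality prove elusive, an alternative is to use the Chen--Zhang formulas to build a braided comultiplication $\Delta_R\colon R\to R\o R$ directly and show that no coassociative, counital, antipode-admitting such map exists on $\k\oplus M_2(\k)$; but I expect the self-duality route, anchored on Lemma \ref{l:dual}, to be the cleaner finish.
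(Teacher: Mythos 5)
Your setup coincides with the paper's: semisimplicity (Theorem \ref{t:ss}) forces $R\cong \k e\oplus A$ with $A\cong M_2(\k)$ a Yetter--Drinfeld matrix subalgebra on which $x$ acts non-trivially, and the Chen--Zhang classification \cite{CZ} is then invoked exactly as you propose. The divergence --- and the gap --- is at your ``decisive step.'' First, it is not established that the $16$-dimensional block (your $IH_4$) has two $2$-dimensional simple modules: Lemma \ref{l:ideals}(i) only gives indecomposability, and $M_2(\k)\,\#\,H_4\cong M_2(\k)\otimes C$ for a $4$-dimensional centralizer algebra $C$ that need not be $H_4$; if the $H_4$-action is not implemented by an algebra map $H_4\to M_2(\k)$ one can have $C\cong M_2(\k)$, i.e.\ the block is $M_4(\k)$ with a single $4$-dimensional simple, against which Lemma \ref{l:dual} says nothing. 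Second, and more fundamentally, even in the two-simples configuration the duality $W\mapsto W\du$ is computed from the antipode $S(ra)=(S_{H_4}(a_2)\triangleright S_R(r))S_{H_4}(a_1)$, so deciding whether duality fixes or swaps the two simples requires knowing $S_R$, hence $\Delta_R$ --- data that the Chen--Zhang classification of Yetter--Drinfeld module \emph{algebras} does not supply. You acknowledge this but leave it as an expectation; that unproved dichotomy is the entire content of the lemma, so the primary route as written does not close.

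The paper's actual proof is essentially your fallback route. Having fixed generators $u,v$ satisfying \eqref{eq:alg}--\eqref{eq:module}, it enumerates the three admissible comodule structures \eqref{eq:cases}, uses the integral identities of Lemma \ref{l:chi1} together with the dual-basis formula of \cite{FMS97} and \cite{Doi00} to eliminate case (A) and to pin down the coefficients of $S_R$ (forcing $\zeta_2=1$ and $\g=1$), and then verifies that the braided anti-multiplicativity $S_R(rs)=(r_{-1}\triangleright S_R(s))S_R(r_0)$ fails in cases (B) and (C). The contradiction is thus found inside $R$ itself rather than in the module category of the biproduct. Note that to complete your primary route you would in any case have to carry out this same determination of $\Delta_R$ and $S_R$ first, at which point the detour through self-duality and Lemma \ref{l:dual} becomes superfluous.
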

 The proof of Lemma \ref{l:b5} will be elaborated in the remainder of this section. We will continue to use the convention for the Sweedler algebra $H_4$  introduced in the end of Section \ref{s:1}.

\begin{remark}\label{r:1}
The classification of 4-dimensional matrix algebras in $\YD{H_4}$ follows immediately from \cite[Theorem 4.10]{CZ}. In general, for any finite-dimensional Hopf algebra $B$, the categories $\CC=\CM{D(B)}$ and $\CD=\YD{B}$ are isomorphic as monoidal categories under the strict monoidal functor $\CF: \CC \to \CD$ defined as follows:
for $M \in \CC$, the $B$-module structure of $\CF(M)$ is the $B$-restriction of $M$, and the left $B$-comodule structure $\rho: M \to B \o M$ of $\CF(M)$ given by
\begin{equation}\label{eq:comultformula}
 \rho(v) = \sum_{i} S_B(b_i) \o b^i v
\end{equation}
for $v \in M$, where $\{b_i\}_i$ is a basis for $B$ and $\{b^i\}_i$ is its dual basis. For any morphism $f: M \to N$ in $\CC$,  the assignment $\CF(f):=f$ is a morphism from $\CF (M)$ to $\CF(N)$ in $\CD$.
Since $M=\CF(M)$ as vector spaces, $\dim M =\dim \CF(M)$ for all $M \in \CC$.

This isomorphism of monoidal categories implies that $A$ is an algebra in $\CC$ if, and only, if $\CF(A)$ is an algebra in $\CD$. In this case, $A =\CF(A)$ as $\k$-algebras. Moreover, $A$ and $B$ are isomorphic algebras in $\CC$ if, and only if,  $\CF(A)$ and $\CF(B)$ are isomorphic algebras in $\CD$.
\end{remark}

 Assume  $R$ is a 5-dimensional non-commutative braided Hopf algebra in $\YD{H_4}$ with left $H_4$-action  $\triangleright$ and left $H_4$-coaction $\rho_R$.
  By Theorem \ref{t:ss}, $R$ is a semisimple $\k$-algebra, and so
 $R$ has a unique ideal decomposition $R=A\oplus I$
where $A$ is a 4-dimensional ideal isomorphic to a matrix algebra
and $I$ is an 1-dimensional ideal generated by  the normalized integral $e=\Lambda_R/\e_R(\Lambda_R)$. Hence, $\e_R(A)=0$ and $e$ is a central idempotent of $R$.  It follows from \eqref{eq:chi3} and Lemma \ref{l:chi1} that
$t\triangleright e = \e_{H_4}(t) e$ for $t \in H_4$, and $\rho_R(e)= 1_{H_4} \o e$. Since the
action of $g$ is an algebra automorphism of $R$, $g \triangleright A
= A$ and hence $g$ fixes the central idempotent $\iota$ of $A$.
By Lemma \ref{l:e1}, $A$ is closed under the $H_4$-action of $R$ and  $x
\triangleright \iota =0$. Thus, $A$ is also a $H_4$-module algebra.

The left $H_4$-comodule algebra structure of $R$ induces a right
$H_4^*$-module algebra on $R$. Since $H_4^* \cong H_4$ as Hopf algebras,
by the preceding arguments, $A$ is also a right $H_4^*$-submodule of $R$
and hence a $H_4$-subcomodule of $R$. Thus, $A$ is a 4-dimensional
matrix algebra in the category $\YD{H_4}$. Since $R$ is not commutative and $x$ acts trivially on $I$, the $x$-action on  $A$ cannot be trivial by Proposition \ref{p:commu-braided}.

In view of Remark \ref{r:1}, we apply the classification of 4-dimensional matrix algebra in $\YD{H_4}$ follows from \cite[Theorem 4.10]{CZ}. Since the $x$-action on $A$ is trivial and $\k$ is algebraically closed, the Yetter-Drinfeld $H_4$-algebra $A$ can only be isomorphic to the corresponding classes (c), (h), (i), (k), (m) or (n) of \cite[Theorem 4.10]{CZ}. There exist $u,v \in A$ such that
 $\{\iota, u, v, uv\}$ forms a basis for $A$ and satisfy the relations
\begin{equation}\label{eq:alg}
u^2 =\alpha \iota, \quad v^2 = \b \iota, \quad uv+vu = \g \iota
\quad \text{for some } \alpha, \b, \g \in \k\,,
\end{equation}
and the $H_4$-action $\triangleright$  is given by
\begin{equation}\label{eq:module}
\begin{aligned}
 & x\triangleright u=0,  \quad\,   x\triangleright v=\iota,
  & g\triangleright u=-u,  \quad g\triangleright v=-v \,.
\end{aligned}
\end{equation}

The $H_4$-comodule structures on $A$ can only be one of the following
three cases:
\begin{equation}\label{eq:cases}
\begin{array}{cl}
 \mbox{(A)} & \rho_R(u)=1_{H_4} \o u + 2 x \o uv, \quad \rho_R(v)=g \o v - 2\b xg \o \iota  \text{ and } \g=0; \medskip \\
 \mbox{(B)} &  \rho_R(u)=g \o u, \quad\rho_R(v)=\eta xg \o \iota + g \o v \text{ where } \eta \in \k; \medskip\\
 \mbox{(C)} & \rho_R(u)=xg\o \iota+g\o u,\quad \rho_R(v)=g\o v. \medskip
\end{array}
\end{equation}
By \eqref{eq:module}, $A$ can be decomposed into a direct sum
$A=P_0 \oplus P_1$ of $H_4$-submodules, where $P_0$ is spanned by $uv, u$
and $P_1$ by $v,\iota$. The $H_4$-modules $P_0$ and $P_1$ are
 two principal modules of $H_4$, i.e. $H_4 \cong P_0 \oplus P_1$.
   Therefore, $R=P_0 \oplus P_1 \oplus I$ is a direct sum
of indecomposable $H_4$-submodules of $R$. Note that
$$
\Hom_{H_4}(P_i, P_j) \cong \Hom_{H_4}(P_0, I) \cong \Hom_{H_4}(I, P_1) \cong
\Hom_{H_4}(I, I) \cong \k
$$
as $\k$-linear spaces for all $i, j=0, 1$. Since the antipode of $R$
is a left $H_4$-module map, we have
$$
\begin{array}{cccccccc}
    S_R(\iota)& = &\zeta_1 \iota & &  & &\\
    S_R(u)& = & & \zeta_2 u  & & & &\\
    S_R(v)& = && \zeta_3 u & + \zeta_1 v& &  \\
    S_R(uv)& =& \zeta_4 \iota & & & +\zeta_2 uv & +\zeta_5 e \\
    S_R(e)& = &\zeta_6 \iota  & & & &+ \zeta_7 e
    \end{array}
$$
    for some $\zeta_1, \dots, \zeta_7 \in \k$. Since $S_R(1_R) =1_R$ and $\e_R \circ S_R =\e_R$, it follows immediately
from Lemma \ref{l:chi1}  that
$$
\zeta_5=\zeta_6=0, \quad\text{and}\quad  \zeta_1=\zeta_7=1.
$$

    Let $\l_R$ be a right integral of $R^*$ such that $\l_R(e)=1$. Since $R^*$ is also a semisimple $\k$-algebra, $\l_R$ is a two-sided integral. In view of \eqref{eq:chi3} and \eqref{eq:module}, $\l_R(\iota)=\l_R(u)=\l_R(v)=0$. Since $\ker \l_R$ cannot contain any non-zero ideal of $R$, $\l_R(uv) \ne 0$. It follows from \eqref{eq:one} of Lemma  \ref{l:chi1} that case (A) of \eqref{eq:cases} is impossible.

    One can renormalize $u$ or $v$ to further assume $\l_R(uv)=1$. Then it follows from \eqref{eq:alg} that
    $\l_R(vu)=-1$. With respect to the Frobenius map $\l_R$,  $\{\iota, u, v, uv, e\}$  and $\{uv, -v, u, \iota, e\}$ form a pair of the dual bases for $R$, i.e.
    $$
    r = \iota \l_R(uv r)-u \l_R(v r)+v \l_R(u r)+uv \l_R(\iota r)+e \l_R(e r) \quad \text{for }r \in R.
    $$
    By \cite{FMS97},
    \begin{multline}\label{eq:dualbasis0}
     \iota \o uv- u \o v +v \o u +uv \o \iota +e\o e \\ = S_R\inv((e^{(2)})_{0}) \o
\e_{H_4}((e^{(2)})_{-1})  S_{H_4}\inv((e^{(2)})_{-2}) \triangleright e^{(1)} \\
    = S_R\inv((e^{(2)})_{0}) \o S_{H_4}\inv((e^{(2)})_{-1}) \triangleright e^{(1)}\,.
    \end{multline}
    Hence
   \begin{multline}\label{eq:dualbasis}
       S_R(\iota) \o uv- S_R(u) \o v +S_R(v) \o u +S_R(uv) \o \iota +S_R(e)\o e \\ = (e^{(2)})_{0} \o
S_{H_4}\inv((e^{(2)})_{-1}) \triangleright e^{(1)}\,.
   \end{multline}
    Applying $\l_R \o \id$ to the equation, we then have
    \begin{multline}
      \zeta_2\iota+e = \l_R((e^{(2)})_{0} )S_{H_4}\inv((e^{(2)})_{-1}) \triangleright e^{(1)} \\ =
\l_R(e^{(2)}) S_{H_4}\inv(1_{H_4}) \triangleright e^{(1)}  = \l_R(e^{(2)})  e^{(1)} = 1_R \l_R(e) =1_R.
    \end{multline}
    Therefore, $\zeta_2=1$.
    Applying the equality
    $$
    S_R\inv((e^{(2)})_{0})  S_{H_4}\inv((e^{(2)})_{-1}) \triangleright e^{(1)} =\e_R(e)1_R
    $$
    (cf. \cite[1.6]{Doi00}) to \eqref{eq:dualbasis0}, we find
    $\g \iota +e=1_R$ and hence $\g=1$. However, this implies $S_R$ cannot be an anti-algebra homomorphism in the braided sense, i.e.
    \begin{equation}\label{eq:briaded-anti-alg}
     S_R(rs) = (r_{-1} \triangleright S_R(s))S_R(r_0)\,.
    \end{equation}
    Suppose $S_R$ satisfies the condition \eqref{eq:briaded-anti-alg}. In both cases (B) and (C),  we find
      $$
    (u_{-1} \triangleright S_R(u))S_R(u_0) =- \a \iota,\quad  S_R(u^2) = \a\iota,
     $$
    $$
    (v_{-1} \triangleright S_R(u))S_R(v_0) =-\zeta_3 \a \iota-uv, \quad S_R(vu)=(1-\zeta_4)
\iota-uv\,.
     $$
     These equalities imply $\a=0$ and $\zeta_4=1$.
     Thus, $S_R(uv) = uv+\iota$. Since
     $$
     (u_{-1} \triangleright S_R(v)) S_R(u_0)=\left\{\begin{array}{cl}
     uv -\iota &\text{for case (B) and}\\
     uv -2\iota &\text{for case (C)},\\
     \end{array}\right.
     $$
     both cases (B) and (C) do not satisfy \eqref{eq:briaded-anti-alg}. Therefore, there does
not exist any non-commutative semisimple braided Hopf algebra of dimension 5 in $\YD{H_4}$.
This completes the proof of Lemma \ref{l:b5}. \qed

\section{Non-Semisimple Hopf algebras of dimension $20$, $28$ and $44$}\label{s:main}
Let $H$ be a non-semisimple Hopf algebra over $\k$ of dimension $4p$ where $p$ is an odd prime.
By Proposition \ref{p:bosonization}, if $H$ and $H^*$ are not pointed and they both admit a non-trivial skew primitive element, then $H$ is a biproduct of the form $R \times H_4$. In this section, we prove the following technical lemma:
\begin{lem}\label{l:skewprimitive}
  If $H$ is a non-semisimple Hopf algebra over $\k$ of dimension $4p$ where $p=3,5,7,11$, then both $H$ and $H^*$ have a non-trivial skew primitive element.
\end{lem}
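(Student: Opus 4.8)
The plan is to reduce the statement to a statement about the structure of $P(\k)$ and its composition factors, and then to push the dimension constraints hard for each small prime. A Hopf algebra $H$ has a non-trivial skew primitive element if and only if $H^*$ has a non-trivial $\Ext^1$ between $1$-dimensional modules, so by Lemma \ref{l:primitive} the question reduces to whether $P(\k)$ (for $H^*$, equivalently a suitable projective cover for $H$) has a $1$-dimensional module other than $\k$ among its radical layers. Equivalently, I would argue by contradiction: assume $H$ (say) has \emph{no} non-trivial skew primitive element. By Lemma \ref{l:primitive}, this forces $\Ext(V,W)=0$ for all $1$-dimensional $V,W$, so every simple $H$-submodule of $JP(\k)/J^2P(\k)$ and of $\Soc(I(\k)/\k)$ has dimension $\ge 2$. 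The first move is therefore to collect the numerical constraints already available: by Corollary \ref{c:Sorder}, $\dim P(\k)$ is even; by Lemma \ref{l:4} and Theorem \ref{t:Gorder} we may assume $|G(H)|=|G(H^*)|=2$ (otherwise $H$ is pointed and has skew primitives trivially), so $\dim P(\k)$ is constrained and the trivial module appears exactly twice (socle and head) in $P(\k)$.

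Next I would run the standard counting argument on $\Irr(H)$. With $|G(H)|=2$ there are exactly two $1$-dimensional simples $\k,\k_\beta$, and the remaining simples have dimension $\ge 2$. Writing $\dim H=4p=\sum_{V}\dim V\,\dim P(V)$ and using that $\dim V \mid \dim P(V)$-type relations together with Lemma \ref{l:composition}, I would enumerate the possible dimension vectors of $\Irr(H)$ for each $p\in\{3,5,7,11\}$. Because $H$ is non-semisimple and $4p$ is small, the list of admissible $(\dim V)$ patterns is very short. The goal of this enumeration is to show that in every admissible pattern, $P(\k)$ \emph{must} have a $1$-dimensional composition factor different from $\k$ sitting in its second radical layer — equivalently that the assumption ``no non-trivial skew primitive'' forces $JP(\k)/J^2P(\k)$ to be built from $\ge 2$-dimensional simples, which then inflates $\dim P(\k)$ beyond the dimension budget $4p$ allows (via the inequalities in Lemma \ref{l:composition} applied to the projective covers of those $\ge 2$-dimensional simples).

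The argument is symmetric in $H$ and $H^*$ (both are non-semisimple of dimension $4p$ by Larson--Radford, and both have $|G|=2$ in the remaining case), so proving it for $H$ gives it for $H^*$ by the same reasoning applied to the dual. Concretely, for each $p$ I would assume both $H$ and $H^*$ lack skew primitives, derive that neither $P_H(\k)$ nor $P_{H^*}(\k)$ has a $1$-dimensional radical layer beyond the head/socle, and then show the two dimension counts are jointly incompatible with $\dim H=\dim H^*=4p$.

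I expect the main obstacle to be the case analysis for the larger primes, particularly $p=11$, where $\dim H=44$ permits more candidate dimension vectors for $\Irr(H)$ (e.g.\ configurations involving several $2$-dimensional or a $3$-dimensional simple), so more of these must be eliminated individually. The delicate point throughout is controlling the second radical layer of $P(\k)$: ruling out that it is built entirely from $\ge 2$-dimensional simples requires combining Corollary \ref{c:Sorder} (parity of $\dim P(\k)$), the reciprocity $[P(V):W]=[P(W):\k_{\a^{-1}}\o{}\bidu V]$ from Lemma \ref{l:composition}, and Lemma \ref{l:dual} (no self-dual $2$-dimensional simple) to squeeze out the remaining configurations. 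The parity and self-duality constraints are what make the small-$p$ bound tight enough to force a contradiction.
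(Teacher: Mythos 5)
Your overall strategy---argue by contradiction, reduce via Lemma \ref{l:primitive} to the structure of $P(\k)$, and then squeeze the dimension formula $4p=\sum_U\dim U\cdot\dim P(U)$ using parity (Corollary \ref{cor:proj}/\ref{c:Sorder}), reciprocity (Lemma \ref{l:composition}) and the absence of self-dual $2$-dimensional simples (Lemma \ref{l:dual})---is the same general scheme the paper follows. But as written the plan has a genuine gap: the dimension count does not close without one additional structural input that you never produce, namely that the no-skew-primitive hypothesis forces the existence of \emph{two distinct} $S$-duality orbits of simple modules of dimension at least $2$ (the paper's Lemma \ref{l:classes}). With only the one composition factor $V$ of $P(\k)$ that Lemma \ref{l:primitive} guarantees, the inequalities you cite are not jointly contradictory even for $p=11$: for instance $\Irr(H)=\{\k,\k_\beta,V,V\du\}$ with $\dim V=2$, $\dim P(\k)=4$, $\dim P(V)=9$ satisfies $2\cdot 4+2\cdot 2\cdot 9=44$ and passes every parity, reciprocity and duality test you list. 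The paper excludes such configurations by observing that $V\du\o P(V)$ is projective and decomposing it, which yields the Diophantine constraints $dD=n_0D_0+n_1D$ and $4p=|G(H^*)|D_0+mdD$, forcing $D=p$ and then a contradiction with $V\not\cong V\bidu$. That tensor-product argument is the key idea your proposal is missing, and without it the ``enumeration of dimension vectors'' cannot be completed.

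Two smaller but real problems: you assert $|G(H)|=|G(H^*)|=2$, but Theorem \ref{t:Gorder} only gives $|G(H^*)|\le 2$ under your hypothesis, and the case $|G(H^*)|=1$ genuinely occurs in the analysis (the paper's final step treats $|G(H^*)|=2$ and $|G(H^*)|=1$ separately, with different lower bounds on $\dim P(\k)$ coming from the self-dual module $M=\bigoplus_\beta P(\k_\beta)$ and the estimate $[M:V]\ge 2$). And the ``$\dim V\mid\dim P(V)$-type relations'' you invoke do not exist for non-semisimple Hopf algebras; the correct substitute is the Etingof--Gelaki inequality $\dim U\cdot\dim P(U)\ge\dim P(\k)$, which is what the paper actually uses.
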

 As a consequence, we can complete the proof of Theorem \ref{t:2844}.

 \begin{proof}[Proof of Theorem \ref{t:2844}]
  Suppose $H$ and $H^*$ are not pointed. By Lemma \ref{l:skewprimitive} and Proposition \ref{p:bosonization}, $H \cong R \times H_4$ for some braided Hopf algebra $R$ in $\YD{H_4}$ and $\dim R=3,5, 7$ or $11$. Applying Corollary \ref{c:347} and Lemma \ref{l:b5}, $R$ is a commutative algebra. By  Proposition \ref{p:commu-braided}, $R \times H_4$ is a pointed Hopf algebra, a contradiction.
 \end{proof}

The remainder of the section is devoted to prove Lemma \ref{l:skewprimitive}, and we need to establish a few lemmas for the proof. Throughout this section, we assume $H$ is a \emph{non-semisimple} Hopf algebra over $\k$ of dimension $4p$, and let
$$
\Irr_{1}(H)=\{U \in \Irr(H)\mid \dim U \mbox{ is odd }\}.
$$
\begin{lem}\label{l:step1} Let $V$ be a simple $H$-module  with $\dim V > 1$.
Suppose  $V$ is a composition factor of $P(\k)$. Then
   \enumeri{
   \item   $\dim P(V) \ge 2\dim V +1$, $\dim P(\k)\ge \dim V +2$, and $\dim P(\k)$ is even.
   \item If $V \cong V\bidu$, then $\dim P(V) \ge  2\dim V +2$ and
   $$
   \sum_{W \in \Irr_1(H)} [P(V): W]
   $$
   is a positive even integer.
   }
\end{lem}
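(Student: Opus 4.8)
The plan is to exploit the two structural facts established earlier: that $\dim P(\k)$ is even (Corollary \ref{c:Sorder}), and that composition-factor multiplicities are governed by the duality of Lemma \ref{l:composition}. First I would record the elementary inequalities in (i). Since $V$ with $\dim V>1$ appears as a composition factor of $P(\k)$ but $\k$ is not projective, $P(\k)$ must have its trivial head and socle together with $V$, giving $\dim P(\k)\ge \dim V+2$. Applying Lemma \ref{l:composition} with the roles of $\k$ and $V$ — noting $[P(\k):V]>0$ forces $[P(V):\k_{\a\inv}\o{}\bidu\k]=[P(V):\k_{\a\inv}]>0$, and tracking head, socle, and at least one copy of the nontrivial factor $V$ inside $P(V)$ — yields $\dim P(V)\ge 2\dim V+1$. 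The evenness of $\dim P(\k)$ is exactly Corollary \ref{c:Sorder}.

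For part (ii), the extra hypothesis $V\cong V\bidu$ should be fed into Corollary \ref{cor:proj} (via Corollary \ref{c:Sorder}, which gives $\ord(S^2)\mid 8$, a power of $2$): since $V\cong V\bidu$ implies $P(V)\cong P(V)\bidu$, we get $2\mid \dim P(V)$. Combined with the bound $\dim P(V)\ge 2\dim V+1$ from (i), the parity upgrade forces $\dim P(V)\ge 2\dim V+2$.

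The heart of (ii) is the parity statement on $\sum_{W\in\Irr_1(H)}[P(V):W]$. The plan here is a counting argument modulo $2$. I would write $\dim P(V)=\sum_{W\in\Irr(H)}[P(V):W]\dim W$ and reduce modulo $2$: the even-dimensional simples contribute $0$, so
$$
\dim P(V)\equiv \sum_{W\in\Irr_1(H)}[P(V):W]\dim W\equiv \sum_{W\in\Irr_1(H)}[P(V):W]\pmod 2,
$$
the last congruence because each odd-dimensional $W$ has $\dim W\equiv 1$. Since $\dim P(V)$ is even by the previous paragraph, the sum is even. To see it is strictly positive, I would observe that the head and socle of $P(V)$ are copies of $V$, which is even-dimensional, so they do not contribute to $\Irr_1$; instead I would locate a guaranteed odd-dimensional composition factor — the trivial module $\k$ must appear, since Lemma \ref{l:composition} gives $[P(V):\k_{\a\inv}]=[P(\k):V]>0$ and $\k_{\a\inv}$ is $1$-dimensional, hence odd. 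Thus the sum contains a positive term and is even, so it is a positive even integer.

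The main obstacle I anticipate is the positivity half of the final claim: I must make sure the odd-dimensional factor I exhibit genuinely lies in $\Irr_1(H)$ and is counted by $[P(V):-]$ rather than by $[P(\k):-]$. The clean route is the duality of Lemma \ref{l:composition}, which converts the known $[P(\k):V]>0$ into $[P(V):\k_{\a\inv}\o{}\bidu\k]>0$; since $\k_{\a\inv}\o{}\bidu\k$ is $1$-dimensional it belongs to $\Irr_1(H)$, securing positivity. Everything else is a parity bookkeeping that follows once the evenness of $\dim P(V)$ is in hand.
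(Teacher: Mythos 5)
Your argument is correct and follows essentially the same route as the paper: part (i) is Lemma \ref{l:composition} applied to the pair $(\k, V)$ together with Corollary \ref{c:Sorder}, and part (ii) combines the evenness of $\dim P(V)$ (from $P(V)\cong P(V)\bidu$ and Corollary \ref{cor:proj}) with the mod-$2$ reduction of $\dim P(V)=\sum_W [P(V):W]\dim W$ and the positivity $[P(V):\k_{\a\inv}]=[P(\k):V]>0$. Two asides in your write-up are inaccurate but harmless: the socle of $P(V)$ is $\k_{\a\inv}\o{}\bidu V$ rather than $V$, and nothing in the hypotheses forces $\dim V$ to be even --- but your positivity argument never actually uses either claim, only that $\k_{\a\inv}$ is $1$-dimensional and hence lies in $\Irr_1(H)$.
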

\begin{proof} (i) follows immediately from Lemma \ref{l:composition} and Corollary
   \ref{c:Sorder}.\\
   (ii) If $V \cong V\bidu$, $\dim P(V)$ is even, and hence the inequality follows. Moreover, the number of odd dimensional simple constituent of $P(V)$ including multiplicities
   must be even, or $\sum_{W \in \Irr_1(H)} [P(V): W]$ is even. By Lemma \ref{l:composition}, $[P(\k): V] = [P(V): \k_{\a\inv}] \ge 1$ where $\a$ is the distinguished group-like element of $H^*$. Therefore,  $\sum_{W \in \Irr_1(H)} [P(V): W] >0$.
\end{proof}

The cyclic group $\DD$ generated by the antipode  $S$ acts on $\Irr(H)$ by duality, i.e.
$S\cdot U \cong U\du$ for $U \in \Irr(H)$.
 The orbit $\DD_U$ of $U \in \Irr(H)$ is given by
$$
\DD_U=\{U, U^{\vee}, U\bidu ,U^{\vee\vee\vee}, \dots\}
$$
up to isomorphism.
It follows from Lemma \ref{l:4} that $|\DD_U| = 1,2,4$ or 8 for all simple $H$-modules $U$. By \eqref{eq:socle},
$$
P(U)^{\vee\vee\vee} \o \k_\a  \cong P( U\du)
$$
for $U \in \Irr(H)$. Thus,
\begin{equation} \label{eq:dimPdual}
\dim P(W) = \dim P(U) \quad \text{ for all } W \in \DD_U\,.
\end{equation}
Therefore, one has the equation
\begin{align}\label{eq:dimension}
\dim H=\sum_{U\in \mathrm{Irr}(H)}\dim U\cdot\dim P(U) = \sum_{U \in \RR}|\DD_U|\cdot \dim U\cdot\dim P(U)
\end{align}
where $\RR$ is a complete set of representatives of $\DD$-orbits.
\begin{lem}\label{l:step2}
 For any simple $H$-module $U$ such that $\dim U > 1$ and $|\DD_U|=1$,  $$\dim U \cdot \dim P(U) \ge 16.$$
\end{lem}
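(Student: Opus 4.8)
The goal is to show that for a simple $H$-module $U$ with $\dim U > 1$ and $|\DD_U| = 1$ (i.e.\ $U$ is self-dual, $U \cong U\du$), we have $\dim U \cdot \dim P(U) \ge 16$. The plan is to leverage two facts already available: first, by Lemma \ref{l:dual}, every $2$-dimensional simple $H$-module fails to be self-dual, so the hypothesis $|\DD_U|=1$ forces $\dim U \ne 2$; second, since $U \cong U\du$ implies $U \cong U\bidu$, Corollary \ref{c:Sorder} (via Corollary \ref{cor:proj}) gives that $\dim P(U)$ is \emph{even}. I would combine these parity and positivity constraints with the basic structural inequality $\dim P(U) \ge 2\dim U$ coming from the fact that $U$ sits in both the socle and the head of its projective cover $P(U)$, while $P(U)$ is not simple because $H$ is non-semisimple.

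**Main case analysis.**

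First I would dispose of the smallest admissible dimensions of $U$. Since $\dim U > 1$ and $\dim U \ne 2$ by Lemma \ref{l:dual}, the smallest possibility is $\dim U = 3$. For $\dim U \ge 4$, the trivial bound $\dim P(U) \ge 2\dim U \ge 8$ already yields $\dim U \cdot \dim P(U) \ge 4 \cdot 8 = 32 \ge 16$, so the inequality is immediate. The only delicate case is $\dim U = 3$, where I need $\dim P(U) \ge 6$. Here the head and socle of $P(U)$ are each copies of $U$, contributing $6$ to the dimension, so $\dim P(U) \ge 6$ unless $P(U) = U$ is simple; but $P(U)$ simple would mean $U$ is projective, contradicting non-semisimplicity combined with the dimension count $\dim H = 4p$ (a single projective simple of dimension $3$ would force $3 \mid 4p$ with $p$ an odd prime, and more to the point a projective simple module in a non-semisimple context is ruled out by the trace condition $\Tr(S^2) = 0$). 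Thus $\dim P(U) \ge 6$ and $\dim U \cdot \dim P(U) \ge 18 \ge 16$.

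**Handling the parity refinement and the expected obstacle.**

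The step I expect to require the most care is confirming that in the $\dim U = 3$ case one genuinely gets $\dim P(U) \ge 6$ rather than merely $\dim P(U) \ge 4$; the naive socle-plus-head argument needs the composition factors $\Soc(P(U)) \cong U$ and $\head(P(U)) \cong U$ to be distinct layers, which holds precisely because $P(U)$ is non-simple. Since $\dim P(U)$ is even by the evenness from Corollary \ref{cor:proj}, the values $\dim P(U) \in \{2,4\}$ must be excluded: $\dim P(U) = 2$ is impossible as $2 < \dim U = 3$, and $\dim P(U) = 4$ would force $P(U)$ to have length two with both constituents of dimension $3$ summing to $6 > 4$, a contradiction. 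Hence $\dim P(U) \ge 6$, completing every case. The overall structure is therefore a short finite case check driven by the two hard inputs—non-self-duality of $2$-dimensional simples and evenness of $\dim P(U)$—with the $\dim U = 3$ case being the only one where the parity argument is actually invoked.
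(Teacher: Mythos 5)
Your overall route is the same as the paper's: exclude $\dim U = 2$ via Lemma \ref{l:dual}, get evenness of $\dim P(U)$ from $U \cong U\bidu$ and Corollary \ref{c:Sorder}, and finish with a socle/head dimension count. The conclusion is correct in every case, but one of your stated justifications is a genuinely false claim: you assert that $P(U)$ cannot be simple ``because $H$ is non-semisimple,'' and elsewhere that ``a projective simple module in a non-semisimple context is ruled out by the trace condition $\Tr(S^2)=0$.'' Neither is true in general: non-semisimplicity only guarantees that the \emph{trivial} module $\k$ is not projective, and non-semisimple Hopf algebras can perfectly well have projective simple modules (the Steinberg modules of small quantum groups are the standard example); likewise, a projective simple of dimension $3$ would only contribute a $9$-dimensional block, not force $3 \mid 4p$. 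Consequently your blanket inequality $\dim P(U) \ge 2\dim U$, and the claimed bound of $32$ for $\dim U \ge 4$, are not justified as written.

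The paper avoids this by splitting on the parity of $\dim U$: for $\dim U$ odd, the evenness of $\dim P(U)$ forces $\dim P(U) > \dim U$, hence $U$ is \emph{not} projective and $\dim P(U) \ge 2\dim U$, giving $2(\dim U)^2 \ge 18$; for $\dim U$ even one has $\dim U \ge 4$ and only the trivial bound $\dim P(U) \ge \dim U$ is used, giving $(\dim U)^2 \ge 16$. Your proof is salvageable with exactly this repair, since in the range $\dim U \ge 4$ the weak inequality $\dim P(U)\ge\dim U$ already yields $16$, and in the one case where non-projectivity is genuinely needed ($\dim U = 3$, where a projective simple would give $9 < 16$) you do correctly invoke the parity argument to exclude $\dim P(U) \in \{3\}$ and hence $\{4\}$. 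So the gap is localized and inessential to the truth of the statement, but the justification ``non-semisimple $\Rightarrow$ no projective simples'' should be excised.
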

\begin{proof}
  If $|\DD_U|=1$, then $U \cong U\du$. In view of Lemma \ref{l:dual},  $\dim U \ge 3$. Since $U \cong U\bidu$,  $\dim P(U)$ is even by Corollary \ref{c:Sorder}. Thus, $\dim P(U) > \dim U$ for $\dim U$ odd. In particular, $U$ is not projective and hence $\dim P(U) \ge 2 \dim U$. Therefore,
  $$
  \dim U \cdot\dim P(U) \ge \dim U\cdot 2 \dim U \ge 18
  $$
  for any odd dimensional $U$. If $\dim U$ is even, then $\dim U \ge 4$ and so
  $$
   \dim U \cdot\dim P(U) \ge (\dim U)^2  \ge 16\,. \qedhere
  $$
\end{proof}
Following \cite{EG03}, we always have
\begin{equation}\label{eq:estimate}
 \dim U \cdot \dim P(U) \ge \dim P(\k)
\end{equation}
for all $U\in \Irr(H)$. This inequality together with \eqref{eq:dimension} imply
\begin{equation}\label{eq:numirr}
|\Irr(H)| \le \dim H/\dim P(\k)\,.
\end{equation}

\begin{lem}\label{l:classes}
   Suppose $H^*$ does not have any non-trivial skew primitive element. Then there exist $U, W \in \Irr(H)$ such that $\dim U, \dim W \ge 2$ and $\DD_U \ne \DD_W$.
\end{lem}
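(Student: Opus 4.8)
The plan is to argue by contradiction, assuming every simple $H$-module of dimension $\ge 2$ lies in a \emph{single} $\DD$-orbit, and to derive a contradiction from the dimension equation \eqref{eq:dimension}. The first task is to record the global constraints forced by the hypothesis. Since $H$ is non-semisimple, $\k$ is not projective, so $JP(\k)\neq 0$ and hence $JP(\k)/J^2P(\k)\neq 0$ by Nakayama; by Lemma \ref{l:primitive} every simple constituent of this layer has dimension $\ge 2$, so a simple module of dimension $\ge 2$ exists and $H^*$ is not pointed. Moreover $H$ itself is not pointed: were it pointed, all simple $H$-modules would be $1$-dimensional, so every arrow in the quiver of the non-semisimple algebra $H$ would join two $1$-dimensional simples, forcing $\Ext(\k_\beta,\k_{\beta'})\neq 0$ for some characters and hence, by \cite[Lemma 2.3]{EG03}, a non-trivial skew primitive in $H^*$, against the hypothesis. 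Applying Lemmas \ref{l:p} and \ref{l:4} to both $H$ and $H^*$ then gives $|G(H)|,|G(H^*)|\in\{1,2\}$.

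Next I would eliminate self-dual large simples. Suppose $U\cong U\du$ with $\dim U=n\ge 2$. Exactly as in the proof of Lemma \ref{l:dual}, $S$ stabilises $\ann U$, so $C=\pi^*((H/\ann U)^*)$ is an $n^2$-dimensional simple subcoalgebra of $H^*$ fixed by $S^*$; the Hopf subalgebra $K\subseteq H^*$ it generates satisfies $\dim K\mid 4p$, and since $K$ contains a simple subcoalgebra of dimension $n^2\ge 4$ it cannot have dimension $1$, $4$ or $p$, nor (by Lemma \ref{l:2p}) $2p$. Hence $K=H^*$, and feeding this into Natale's exact sequence \cite{Na02} exactly as in Lemma \ref{l:dual} forces $H$ to be pointed, a contradiction. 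Thus no simple of dimension $\ge 2$ is self-dual; in particular the orbit $\DD_U$ of large simples has $|\DD_U|\ge 2$.

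Now I would exploit the dimension equation. Since all $1$-dimensional projectives share the common dimension $\dim P(\k)$ and, by \eqref{eq:dimPdual}, all members of $\DD_U$ share the projective dimension $\dim P(U)$, equation \eqref{eq:dimension} collapses to
\begin{equation*}
4p=|G(H^*)|\,\dim P(\k)+|\DD_U|\,\dim U\,\dim P(U),
\end{equation*}
with $|G(H^*)|\in\{1,2\}$ and $|\DD_U|\in\{2,4,8\}$. Because the second radical layer of $P(\k)$ consists of constituents of $\DD_U$, some $V\in\DD_U$ is a composition factor of $P(\k)$, so Lemma \ref{l:step1} applies and yields $\dim P(\k)\ge \dim U+2$, $\dim P(U)\ge 2\dim U+1$, with $\dim P(\k)$ even by Corollary \ref{c:Sorder}; when $|\DD_U|=2$ one has $U\cong U\bidu$, giving the sharper $\dim P(U)\ge 2\dim U+2$ and the parity statement of Lemma \ref{l:step1}(ii). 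Substituting these bounds into the displayed equation, together with \eqref{eq:estimate}, is meant to push the right-hand side away from $4p$.

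The hard part will be the near-extremal configurations, where the inequalities above are almost tight, namely small $\dim U$, small orbit size, and $\dim P(\k)$ at its minimum $\dim U+2$. There pure size estimates do not suffice, and one must combine the evenness of $\dim P(\k)$ and $\dim P(U)$, the parity in Lemma \ref{l:step1}(ii), the exclusion of self-dual simples established above, and the constraint $|G(H^*)|\le 2$ to close each case; controlling precisely which dimensions and orbit sizes survive this bookkeeping is the technical crux of the argument.
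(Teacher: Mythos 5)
Your opening reductions match the paper's (argue by contradiction, get $|G(H^*)|\le 2$, collapse \eqref{eq:dimension} to $4p=|G(H^*)|\,\dim P(\k)+|\DD_V|\dim V\dim P(V)$, and apply Lemma \ref{l:step1} to a composition factor $V$ of $P(\k)$ with $\dim V\ge 2$), but the proof is not finished: you explicitly defer the ``near-extremal configurations'' to unspecified bookkeeping, and the tools you list for that bookkeeping do not suffice. Concretely, take $p=11$, $|G(H^*)|=2$, $\dim V=2$, $|\DD_V|=2$, $\dim P(V)=6$, $\dim P(\k)=10$: then $2\cdot 10+2\cdot 2\cdot 6=44$, both projective dimensions are even, $\dim P(V)\ge 2\dim V+2$, the parity condition of Lemma \ref{l:step1}(ii) can be met, $V\not\cong V\du$, and \eqref{eq:estimate} holds, so nothing in your list excludes this case. (Note also that the lemma is stated for every odd prime $p$, so a finite case check cannot close it in the stated generality.) The idea you are missing is the paper's second Diophantine relation: $V\du\o P(V)$ is projective, and by \eqref{eq:iso} the multiplicity of $P(\k_\b)$ in it equals $[V\o\k_\b:V]\in\{0,1\}$, so decomposing it into indecomposable projectives and using \eqref{eq:dimPdual} gives $dD=n_0D_0+n_1D$ with $1\le n_0\le|G(H^*)|$ and $0\le n_1<d$ (where $D_0=\dim P(\k)$, $D=\dim P(V)$, $d=\dim V$, $m=|\DD_V|$). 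Combined with $4p=|G(H^*)|D_0+mdD$ this gives $4p=(md+k(d-n_1))D$ with $k=|G(H^*)|/n_0\in\BZ_+$, which forces $D=p$ and $md+k(d-n_1)=4$; but then $D$ is odd, so $V\not\cong V\bidu$ by Corollary \ref{c:Sorder}, hence $m\ge 4$ and $md+k(d-n_1)>8$, the contradiction. (In the sample configuration above the relation reads $12=10n_0+6n_1$, which is insoluble.)

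Two secondary points. Your justification that $H$ is not pointed (``all simple $H$-modules would be $1$-dimensional'') confuses $H$ with $H^*$: it is pointedness of $H^*$ that forces simple $H$-modules to be $1$-dimensional, and indeed $\AA(\tau,1)$ is pointed with $2$-dimensional simples. You do not need $|G(H)|\le 2$ anyway; the paper only uses $|G(H^*)|\le 2$, which follows because a non-cosemisimple pointed Hopf algebra has a non-trivial skew primitive, so $H^*$ is not pointed and Theorem \ref{t:Gorder} applies to it. Likewise, the paragraph eliminating self-dual simples of dimension $n\ge 2$ by redoing Lemma \ref{l:dual} is both unnecessary and an overreach (that lemma and its use of \cite{Radf75} are specific to $n=2$); what the argument actually needs is only $V\not\cong V\bidu$, which falls out of the parity of $\dim P(V)=p$ at the last step.
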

\begin{proof} Since $H^*$ does not have any non-trivial skew primitive element, $H^*$ is not pointed. In view of Lemma \ref{l:primitive}, $P(\k)$ admits a simple subquotient $V$ of dimension greater than one.  By Theorem \ref{t:Gorder}, $|G(H^*)| \le 2$.

   Assume the lemma is false.  Then $\DD_V$ is a complete set of non-isomorphic simple $H$-modules with dimension greater than 1. For simplicity, we let
   $$
   D_0=\dim P(\k),\, D=\dim P(V),\, d=\dim V \text{ and } m=|\DD_V|\,.
   $$
    By Lemma \ref{l:step1}, $D_0 \ge 4$ and $D \ge 5$. Equation \eqref{eq:dimension} becomes
  \begin{equation}\label{eq:dim}
     4p=|G(H^*)| D_0+ m d D\,.
  \end{equation}
  The $H$-module  $V\du \o P(V)$ is projective, and
  $\dim \Hom_H(V\du\o P(V), \k_\b)$
   is equal to the multiplicity of  $P(\k_\b)$ in an indecomposable
  decomposition of $V\du \o P(V)$ for $\b \in G(H)$. By \eqref{eq:iso},
   $$
  \dim \Hom_H(V\du\o P(V), \k_\b) = [V\o \k_\b: V]=\left\{\begin{array}{ll}
  1 & \text{if $V \cong V \o \k_\b$,}\\
  0 & \text{otherwise.}
  \end{array}\right.
  $$
  Therefore, by \eqref{eq:dimPdual} and a decomposition of $V\du \o P(V)$ into indecomposable $H$-modules, we find
   \begin{equation}\label{eq:dim2}
     dD=\dim ( V\du \o P(V))=n_0 D_0+ n_1  D
  \end{equation}
  where $n_0, n_1$ are integers satisfying $1\le n_0\le |G(H^*)|$ and $0\le n_1 < d$.
  Equations \eqref{eq:dim} and \eqref{eq:dim2} imply
 \begin{equation}\label{eq:dim3}
  (d-n_1)D=n_0 D_0 \quad\text{and}\quad 4p=(md+k(d-n_1))D,
  \end{equation}
 where $k=\frac{|G(H^*)|}{n_0} \in \BZ_+$.
  Since $m, k \ge 1$ and $d\ge 2$, $md+k(d-n_1)  \ge 3$. Equation \eqref{eq:dim3} implies $D=p$ and $md+k(d-n_1)=4$. In particular, $D$ is odd and so $V\not\cong V\bidu$. Thus $m \ge 4$, and hence $md+k(d-n_1)> 8$, a contradiction.
\end{proof}
Now, we can prove our key lemma in this section.
\begin{proof}[Proof of Lemma \ref{l:skewprimitive}] By duality, it suffices to show that $H^*$ has a non-trivial skew primitive element. Suppose $H^*$ has only trivial skew primitive elements. Then $H^*$ is not pointed, and hence, by Theorem \ref{t:Gorder}, $|G(H^*)|\le 2$. In view of \cite[Lemma 2.3]{EG03}, $P(\k)$ has a composition factor $V$ with $\dim V \ge 2$. By Lemma \ref{l:classes}, there exists a simple $H$-module $W$ such that $\dim W \ge 2$ and $\DD_W \ne \DD_V$. \medskip\\
(i) \emph{$|\DD_V|\le 2$ and $\dim P(V) \ge 2\dim V +2$}. Note that
$$
\dim W \dim P(W) \ge \dim P(\k) \ge 4.
$$
By  \eqref{eq:dimension} and Lemma \ref{l:step1},
  $$
  44 \ge \dim P(\k) + |\DD_V|\dim V \dim P(V)+\dim W \dim P(W) \ge  4 + |\DD_V|\cdot 2\cdot 5 + 4 \,.
  $$
  This implies $|\DD_V| < 4$ and hence $|\DD_V|\le 2$. In particular, $V \cong V\bidu$. Thus, the desired inequality follows from Lemma \ref{l:step1}(ii). \medskip\\
   (ii) \emph{$\dim V = 2$ and $\dim P(V)\ge 6$.} In view of (i), the  inequality follows from the first equality.

Assume $\dim V \ge 3$. By (i) and Lemma \ref{l:step1}, $\dim P(V) \ge 8$ and $\dim P(\k)\ge 6$. It follows from  \eqref{eq:estimate} that
   \begin{equation}\label{eq:1boundforW}
    \dim W  \dim P(W) \ge 6\,.
   \end{equation}
   We claim that $\dim W  \dim P(W) \ge 8$. This inequality obviously holds for $\dim W > 2$. For $\dim W =2$, the inequality \eqref{eq:1boundforW} implies  $\dim P(W) > \dim W$  and hence $\dim P(W) \ge 2\dim W$. Therefore,  $\dim W \dim P(W) \ge 8$.

   Now, we apply \eqref{eq:dimension} to obtain
  $$
 44 \ge \dim P(\k) + \dim V \dim P(V)+ |\DD_W|\dim W \dim P(W)\ge  6 + 3 \cdot 8 + |\DD_W| 8.
  $$
  The inequality forces $|\DD_W| =1$, and hence, by Lemma \ref{l:step2}, $\dim W \dim P(W) \ge 16$. However, this implies
  \begin{multline*}
  44 \ge \dim H \ge \dim P(\k) + \dim V  \dim P(V) + \dim W  \dim P(W)  \ge 6+ 24 + 16
  \end{multline*}
  which is absurd.\medskip\\
  (iii) $|\DD_V|=2$ by Lemma \ref{l:dual}, (i) and (ii). \medskip\\
   (iv) \emph{All composition factors of $P(V)$ are of dimension 1 or 2.}
  Suppose there exists a composition factor $U$ of $P(V)$ such that $\dim U \ge 3$. Then, by Lemma \ref{l:composition} and Corollary \ref{c:Sorder},
 $$
 \dim P(V),\, \dim P(U) \ge 8\,.
 $$
 However,
 \begin{multline*}
\dim H \ge \dim P(\k) + |\DD_V|\dim V\dim P(V) + \dim U \dim P(U) \\
\ge 4+2\cdot 2 \cdot 8 + 3\cdot 8 >44\,.
   \end{multline*}
 Therefore, all the composition factors of $P(V)$ are of dimension 1 or 2.\medskip\\
 (v) Let $M = \bigoplus_{\b \in G(H^*)} P(\k_\b)$.
By Lemma \ref{l:composition},
$$
[M: V]= \sum_{\b \in G(H^*)} [P(\k_\b):V] = \sum_{\b \in
G(H^*)} [P(V): \k_\b]\,.
$$
By (iv) and Lemma \ref{l:step1}(ii), $[M: V] \ge 2$.

Since $M$
is self-dual, $[M: V\du]=[M: V]$. Thus, we have
$$
\dim M \ge 2|G(H^*)|+2[M:V] \dim V \ge 2|G(H^*)|+ 8.
$$
If $|G(H^*)|=2$, then $\dim P(\k) = \dim M/2 \ge 6$ and so $\dim W
\dim P(W) \ge 6$. Since $\dim P(V) \ge 6$, we find
$$
\dim H \ge \dim M + 2 \dim V \dim P(V) + |\DD_W| 6 \ge 12+2 \cdot 2 \cdot 6 + |\DD_W|
6\,.
$$
This again forces $|\DD_W|=1$. It follows from Lemma \ref{l:step2}
that $\dim W \dim P(W) \ge 16$. However,
$$
\dim M + 2 \dim V \dim P(V) + \dim W \dim P(W) > 44\,.
$$
Therefore, $|G(H^*)|=1$. In this case, $\dim P(\k)=\dim M \ge 10$
and so  $\dim W \dim P(W) \ge 10$. By \eqref{eq:numirr}, $|\Irr(H)|
\le 4$. Therefore,  $|\DD_W|=1$ and so $\dim W \dim P(W) \ge 16$.
This implies
$$
\dim H \ge \dim P(\k) + 2 \dim V \dim P(V) + \dim W \dim P(W) \ge
50,
$$
a contradiction. This completes the proof of the lemma.
\end{proof}
\medskip
\noindent{\bf Acknowledgment:}
The authors would like to thank Zhongzhu Lin for his suggestion on the proof of Lemma \ref{l:primitive}, and
Michael Hilgemann for his comments on the first draft of this paper.

\providecommand{\bysame}{\leavevmode\hbox to3em{\hrulefill}\thinspace}
\providecommand{\MR}{\relax\ifhmode\unskip\space\fi MR }
\providecommand{\MRhref}[2]{%
  \href{http://www.ams.org/mathscinet-getitem?mr=#1}{#2}
}
\providecommand{\href}[2]{#2}

\end{document}